\theoremstyle{plain}
\newtheorem{theorem}{Theorem}[section]
\newtheorem{proposition}[theorem]{Proposition}
\newtheorem{lemma}[theorem]{Lemma}
\newtheorem{corollary}[theorem]{Corollary}
\theoremstyle{definition}
\newtheorem{example}[theorem]{Example}
\newcommand{\C}{\mathbb{C}}
\newcommand{\Q}{\mathbb{Q}}
\newcommand{\Z}{\mathbb{Z}}
\newcommand{\calA}{\mathcal{A}}
\newcommand{\calD}{\mathcal{D}}
\newcommand{\calE}{\mathcal{E}}
\newcommand{\calI}{\mathcal{I}}
\renewcommand{\O}{\mathcal{O}}
\newcommand{\ind}{\calI}
\DeclareMathOperator{\Nm}{N}
\DeclareMathOperator{\Tr}{Tr}
\title{Totally positive elements with $ m $ partitions exist in almost all real quadratic fields}
\author{Mikul\'{a}\v{s} Zindulka}
\address{Charles University, Faculty of Mathematics and Physics, Department of Algebra,
Sokolovsk\'{a} 83, 186 75 Praha 8, Czech Republic}
\email{zinmik2@gmail.com}
\subjclass[2020]{11R11; 11R80, 11A55, 11P81}
\keywords{Quadratic fields, totally positive integers, indecomposable elements, continued fractions, partitions}
\thanks{The author was supported by Czech Science Foundation (GA\v{C}R) grant 21-00420M, Charles University programme PRIMUS/24/SCI/010, and Charles University project GAUK No. 134824}
\begin{document}

\begin{abstract}
In this paper, we study partitions of totally positive integral elements $ \alpha $ in a real quadratic field $ K $. We prove that for a fixed integer $ m \geq 1 $, an element with $ m $ partition exists in almost all $ K $. We also obtain an upper bound for the norm of $\alpha$ that can be represented as a sum of indecomposables in at most $m$ ways, completely characterize the $\alpha$'s represented in exactly $2$ ways, and subsequently apply this result to complete the search for fields containing an element with $ m $ partitions for $ 1 \leq m \leq 7 $.
\end{abstract}

\maketitle

\section{Introduction}
\label{sec:Intro}

Additive number theory studies properties of subsets of the positive rational integers $ \Z_{\geq 1} $ with respect to addition. Many of its problems can be translated to other commutative semigroups but the results are scarce. One flourishing area at the intersection of additive number theory and combinatorics is the theory of partitions. A partition is a way of representing $ n \in \Z_{\geq 1} $ as a sum of positive integers, while two partitions that differ only by the order of their parts are considered to be the same. There are many results about the partition function $ p(n) $, defined as the number of partitions of $ n $. Hardy and Ramanujan \cite{HR} proved an asymptotic formula for $ p(n) $, which was further improved by Rademacher \cite{Rad37}. Ramanujan \cite{Ram19,Ram21} also discovered his famous congruences for $ p(n) $ modulo powers of $ 5 $, $ 7 $, and $ 11 $. The effort to find congruences for other moduli culminated in the paper \cite{Ono00} by Ono, who proved among other things that for every prime $ m \geq 5 $, there exist infinitely many $ n \in \Z_{\geq 1} $ such that $ p(n) \equiv 0 \pmod{m} $. There are countless partition identities, for example Euler's identity, the first and second Rogers--Ramanujan identities, and the two G\"{o}lnitz--Gordon identities. For an introduction to the theory of partitions, we refer the reader to the elementary \cite{AE} or the more advanced classic \cite{And76}.

Partitions can be made sense of in the setting of number fields. Let $ K $ be a totally real number field, $ \O_K $ its ring of integers, and $ \O_K^+ $ the set of totally positive integers in $ K $. A \emph{partition} of $ \alpha \in \O_K^+ $ is a way of representing $ \alpha $ as a sum of totally positive integers, i.e.,
\[
	\alpha = \lambda_1+\lambda_2+\dots+\lambda_{\ell},
\]
where $ \lambda_i \in \O_K^+ $ for $ 1 \leq i \leq \ell $. Again, two partitions are considered to be the same if they differ only by the order of their parts. We define $ p_K(\alpha) $ to be the number of partitions of $ \alpha $ and call $ p_K $ the \emph{partition function} associated with the number field $ K $. We also set $ p_K(0) := 1 $.

Compared with the integer partition function $ p(n) $, the function $ p_K(n) $ is poorly understood. The problem to determine the asymptotic behavior of $ p_K(\alpha) $ as the norm of $ \alpha $ grows to infinity was proposed by Rademacher \cite{Rad50} and solved by Meinardus, first for a real quadratic field \cite{Mei53} and then in general \cite{Mei56}. If $ K $ is a totally real field of degree $ d $ and discriminant $ \Delta $, then \cite[Satz 3, p. 346]{Mei56} shows that
\[
	\log p_K(\alpha) = (d+1)\left(\frac{\zeta(d+1)}{\sqrt{\Delta}}\Nm(\alpha)\right)^{\frac{1}{d+1}}\left(1+o(1)\right)
\]
as $ \Nm(\alpha) \to \infty $. Here $ \zeta $ denotes the Riemann zeta function. This result was further generalized to a number field which is not required to be totally real by Mitsui \cite{Mit78}.

Some basic properties of $ p_K $ were established in a paper of Stern and the author \cite{SZ}. In particular, $ p_K(\alpha) $ satisfies a recurrence formula similar to a well known recurrence for $ p(n) $ \cite[Theorem 1]{SZ}, which can be used to compute particular values of $ p_K(\alpha) $. It can also be applied to prove a result about the parity of $ p_K(n) $, where $ n \in \Z_{\geq 1} $ \cite[Theorem 2]{SZ}. In two recent papers, Jang, Kim, and Kim developed a framework for extending partition identities from $ \Z $ to $ \O_K $ and used it to prove a version of the Euler--Glaisher Theorem \cite[Theorem 4.1]{JKK1}, Sylvester's Theorem \cite[Theorem 3.2]{JKK2}, and the Rogers--Ramanujan identities over a totally real field \cite[Theorem 3.8, Corollary 3.13]{JKK2}. The Frobenius problem for totally positive integers was studied by Fukshansky and Shi \cite{FS}.

A natural generalization of integer partitions is \emph{vector partitions}; see \cite[Chapter 12]{And76} for an introduction to this topic. Vector partitions of $ \mathbf{v} \in \Z^d $ are of the form $ \mathbf{v} = \mathbf{v}_1+\mathbf{v}_2+\dots+\mathbf{v}_{\ell} $, where $ \mathbf{v}_i \in \Z_{\geq 0}^d $ is non-zero for $ 1 \leq i \leq \ell $. If $ K $ is of degree $ d $, we can fix an integral basis $ (\alpha_1, \alpha_2, \dots, \alpha_d) $ and view $ \alpha \in \O_K $ as a vector in $ \Z^d $. Thus, the two types of partitions are closely related, even though the positivity condition is different: we require the parts to lie in the ``totally positive cone'' defined by
\[
	\left\{(x_1,x_2,\dots,x_d) \in \Z^d\,\vert\,\alpha := x_1\alpha_1+x_2\alpha_2+\dots+x_d\alpha_d \succ 0\right\}.
\]
The proofs of some of the most fascinating results in the theory of integer partitions such as the ones in the previously mentioned paper \cite{Ono00} use modular forms. A major obstacle to extending these results to either vector partitions or partitions in number fields is that the corresponding generating functions are not known to satisfy any interesting modular properties.

Let us mention that there is also another type of partitions in number fields. Namely, for a fixed $ \beta \in \C $, one can consider partitions of $ \alpha \in \C $, where the parts are powers of $ \beta $, i.e., partitions of the form
\[
	\alpha = a_j\beta^j+a_{j-1}\beta^{j-1}+\dots+a_1\beta+a_0,\qquad a_i \in \Z_{\geq 0}.
\]
If $ \beta = m \in \Z_{\geq 2} $, then these are the so-called \emph{$ m $-ary partitions}. Kala and the author \cite[Theorem 1]{KZ} characterized the quadratic irrationals $ \beta $ such that the number of partitions of $ \alpha $ is finite for every $ \alpha \in \C $. The theorem was extended to an algebraic number $ \beta $ of arbitrary degree by Dubickas \cite[Theorem 1.3]{Du24}.

Stern and the author \cite{SZ} studied the following problem: For a fixed $ m \in \Z_{\geq 1} $, determine all the real quadratic fields $ K $ such that $ p_K(\alpha) = m $ for some $ \alpha \in \O_K^+ $. We let
\[
	\calD(m) := \left\{D \in \Z_{\geq 2}\text{ squarefree}\ |\ m \notin p_K(\O_K^+)\right\},
\]
i.e., $ \calD(m) $ is the set of $ D $'s such that $ m $ does not belong to the range of $ p_K $ in $ K = \Q(\sqrt{D}) $.

If $ m $ is one of the values of the integer partition function $ p(n) $, then $ \calD(m) $ is finite (this is a consequence of \cite[Theorem 7]{SZ}). In particular, $ \calD(m) $ was determined for $ m \in \{1,2,3,5,7,11\} $ in \cite[Theorem 3]{SZ} and additionally for $ m = 4 $ in \cite[Theorem 11]{SZ}:
\begin{gather*}
	\calD(1) = \emptyset,\quad\calD(2) = \emptyset,\quad\calD(3) = \{5\},\quad\calD(4) = \emptyset,\quad\calD(5) = \{2, 3, 5\},\\
	\calD(7) = \{2, 5\},\quad\calD(11) = \{2, 3, 5, 6, 7, 13, 21\}.
\end{gather*}

The main result of the present paper is that an element with $ m $ partitions exists in almost all real quadratic fields in the sense that real quadratic fields $ K $ having an element $ \alpha $ with $ p_K(\alpha) = m $ are of density $ 1 $. This is surprising in view of the fact that by the asymptotic formula of Meinardus, $ p_K(\alpha) $ grows exponentially with the cube root of the norm of $ \alpha $ in a fixed $ K $. In particular, the result of Meinardus implies that there is no totally real $ K $ such that for all $ m \in \Z_{\geq 1}$, there exits an $ \alpha \in \O_K^+ $ with $ p_K(\alpha) = m $.

\begin{theorem}
\label{thm:S}
Let $ m \in \Z $, $ m \geq 4 $, and
\[
	\calE(m, X) := \left\{2 \leq D \leq X\,|\,D\text{ squarefree}, \{1,2,\dots,m\}\not\subset p_K(\O_K^+)\text{ for }K = \Q(\sqrt{D})\right\}.
\]
For every $ X \geq 2 $ satisfying $ X \geq (2m-5)^{12}(\log X)^4 $, we have
\[
	\#\calE(m, X) < 100(2m-5)^{3/2}(\log X)^{3/2}X^{7/8}.
\]
\end{theorem}
\begin{proof}
This will be proved as a part of \Cref{thm:S2}.
\end{proof}

A consequence of \Cref{thm:S} is that $ \calD(m) $ has density zero for every $ m \in \Z_{\geq 1} $. This is stated explicitly in \Cref{cor:Dm0}.

The above list of the sets $ \calD(m) $ does not include the case $ m = 6 $. We completely characterize the elements $ \alpha \in \O_K^+ $ such that $ p_K(\alpha) = 6 $, and then determine $ \calD(6) $ in \Cref{thm:E6}.

A distinguished role in the study of partitions in $ \O_K^+ $ is played by the so-called indecomposable elements. An element $ \alpha \in \O_K^+ $ is \emph{indecomposable} if there do not exist $ \beta, \gamma \in \O_K^+ $ such that $ \alpha = \beta+\gamma $. Thus, these are the elements $ \alpha \in \O_K^+ $ satisfying $ p_K(\alpha) = 1 $. They were used by Siegel \cite{Sie45} in 1945 to show that if $ K $ is a number field, $ K \neq \Q $ and $ K \neq \Q(\sqrt{5}) $, then there exists an $ \alpha \in \O_K^+ $ which cannot be represented as a sum of squares, and they found many other applications in the theory of universal quadratic forms since then \cite{BK1, BK2, Kim00}.

Let $ K = \Q(\sqrt{D}) $, where $ D \in \Z_{\geq 2} $ is squarefree. Dress and Scharlau \cite[Theorem 2]{DS} characterized the indecomposables in $ \O_K^+ $ in terms of the continued fraction of $ \sqrt{D} $. As a corollary, they proved the following: the norm of an indecomposable $ \alpha \in \O_K^+ $ satisfies
\[
	\Nm(\alpha) \leq c_D := \begin{cases}
		D&\text{if $ D \equiv 2,3 \pmod{4} $,}\\
		\frac{D-1}{4}&\text{if $ D \equiv 1 \pmod{4} $.}
	\end{cases}
\]
If the fundamental unit $ \varepsilon $ in $ \O_K $ has norm $ -1 $, then the bound is optimal. In the case when the norm of $ \varepsilon $ equals $ 1 $, it was refined in \cite{JK} and \cite{TV}.

In general, no satisfactory description of indecomposables is known in fields of degree higher than $ 2 $. It is not difficult to show that there exists a constant $ c_K > 0 $ such that if $ \alpha \in \O_K^+ $ is indecomposable, then $ \Nm(\alpha) \leq c_K $ (for the argument, see \cite[p. 292]{DS}). Brunotte \cite{Bru83} obtained a bound for the norm in terms of the group of units of $ K $. Kala and Yatsyna \cite[Theorem 5]{KY} found a simple argument showing that in a totally real number field $ K $ of arbitrary degree $ d $ and discriminant $ \Delta $, every indecomposable element $ \alpha \in \O_K^+ $ satisfies $ \Nm(\alpha) \leq \Delta $. This bound is usually much better than Brunotte's. In \cite[Theorem 6]{KY} it is used to construct a universal diagonal quadratic form over $ \O_K $ of rank $ \leq C\cdot\Delta(\log \Delta)^{d-1} $, where the constant $ C>0 $ depends only on~$ d $.

Kala and Tinková \cite[Theorem 1.2]{KT} characterized indecomposables in the family of the simplest cubic fields. \v{C}ech, Lachman, Svoboda, Tinkov\'{a} and Zemkov\'{a} \cite{CL} studied biquadratic fields and found sufficient conditions for an element to be indecomposable. Recently Man \cite{Man24} completely characterized the indecomposables in certain families of biquadratic fields, see for example \cite[Theorem 1.6]{Man24}.

Let us introduce one more piece of notation. We define $ p_K(\alpha|\ind) $ as the number of partitions of $ \alpha $ with indecomposable parts, i.e., partitions $ (\lambda_1,\lambda_2,\dots,\lambda_{\ell}) $ such that $ \lambda_i $ is indecomposable for $ 1 \leq i \leq \ell $. Hejda and Kala \cite{HK} called an element $ \alpha \in \O_K^+ $ \emph{uniquely decomposable} if $ p_K(\alpha|\ind) = 1 $. By \cite[Theorem 10]{HK}, the norm of these elements can bounded~as
\[
	\Nm(\alpha) < \sqrt{\Delta}\left(2\sqrt{\Delta}+1\right)\left(3\sqrt{\Delta}+2\right).
\]
A natural question is how to estimate the norm of a non-uniquely decomposable element. Our next result is an extension of the theorem of Hejda and Kala.

\begin{theorem}
\label{thm:N}
Let $ K = \Q(\sqrt{D}) $, where $ D \in \Z_{\geq 2} $ is squarefree and let $ m \in \Z_{\geq 1} $. If $ \alpha \in \O_K^+ $ can be represented as a sum of indecomposables in at most $ m $ ways, then
\[
	\Nm(\alpha) < m^2(2m+1)(2m+3)\cdot \sqrt{\Delta}\left(\sqrt{\Delta}+2\right)^2,
\]
where $ \Delta $ is the discriminant of $ K $.
\end{theorem}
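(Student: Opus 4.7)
Following the approach of Hejda and Kala \cite{HK} for the case $m=1$, my plan leverages the Dress--Scharlau characterization of indecomposables in $\mathcal{O}_K^+$. By \cite{DS}, modulo multiplication by $\varepsilon^2$ (with $\varepsilon$ the fundamental unit of $K$), the indecomposables form a finite sequence $\alpha_0, \alpha_1, \ldots, \alpha_N$ obeying a three-term recurrence $\alpha_{i-1} + \alpha_{i+1} = u_i \alpha_i$ for positive integers $u_i$ arising from the continued fraction of $\sqrt{D}$. Since any decomposition of a fixed $\alpha \in \mathcal{O}_K^+$ into indecomposables uses only $\alpha_i$'s from a single orbit, I may write $\alpha = \sum_{i=0}^{N} c_i \alpha_i$ with $c_i \in \mathbb{Z}_{\geq 0}$. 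Two such decompositions are related by elementary moves: a \emph{merge} at position $i$ replaces one $\alpha_{i-1}$ and one $\alpha_{i+1}$ by $u_i$ copies of $\alpha_i$ (available when $c_{i-1}, c_{i+1} \geq 1$); a \emph{split} does the reverse (available when $c_i \geq u_i$). The set of decompositions of $\alpha$ is precisely the reachability class under these moves.

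The proof would proceed via three structural bounds, each of which must be respected when $\alpha$ admits at most $m$ decompositions. First, I would bound the support $S := \{i : c_i > 0\}$ by showing that if its diameter exceeds roughly $2m + O(1)$, independent moves at distinct positions generate more than $m$ distinct decompositions (linear independence of consecutive pairs $\alpha_i, \alpha_{i+1}$ over $\mathbb{Q}$ ensures that different combinations of moves yield genuinely distinct tuples $(c_i)$). Second, I would bound the coefficients $c_i$: if $c_{i-1}, c_{i+1} \geq m$, iterating a merge at $i$ yields $m+1$ decompositions, forcing $\min(c_{i-1}, c_{i+1}) \leq m-1$; similarly, $c_i \geq m u_i$ would yield $m+1$ decompositions via iterated splits, so $c_i \leq m u_i - 1$. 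Third, I would invoke the Dress--Scharlau norm bound $\Nm(\alpha_i) \leq c_D \leq \Delta$ together with the standard estimate $u_i = O(\sqrt{\Delta})$ stemming from the bounded partial quotients of the continued fraction of $\sqrt{D}$.

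Combining these ingredients, I would expand $\Nm(\alpha) = \alpha \cdot \bar{\alpha} = \sum_{i,j} c_i c_j \alpha_i \bar{\alpha}_j$ and estimate each factor using the preceding steps. The factor $\sqrt{\Delta}\,(\sqrt{\Delta}+2)^2$ would arise from the sizes of the $\alpha_i$ in one embedding and the $u_i$, while $m^2(2m+1)(2m+3)$ would come from the product of the coefficient bound $c_i = O(m u_i)$ with the support-diameter bound $|S| = O(m)$.

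The principal obstacle is the first step: obtaining a sharp support bound. A naive argument produces only a polynomial-in-$m$ bound with a loose constant, whereas recovering the factor $(2m+1)(2m+3)$ requires careful tracking of exactly which subsets of elementary moves yield distinct decompositions when applied simultaneously. I expect this can be handled by viewing the set of decompositions of $\alpha$ as lattice points inside an explicit convex polytope cut out by the recurrences $u_i\alpha_i = \alpha_{i-1}+\alpha_{i+1}$, reducing the count of decompositions to lattice-point counting. Once that is in place, Steps 2 and 3 extend the Hejda--Kala argument in a relatively routine manner, and the detailed expansion of $\Nm(\alpha)$ in Step 3 is a bookkeeping exercise.
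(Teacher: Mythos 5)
Your Step 2 is essentially the paper's opening move: if $e\geq mv_j$ (or $f\geq mv_{j+1}$) one iterates the three-term relation to produce $m+1$ distinct representations, so $e,f<m(\sqrt{\Delta}+2)$; this part is sound, except that the relation among consecutive indecomposables has coefficient $v_j\in\{2,u_{i+1}+2\}$ (\Cref{lem:vj}), not $u_i$, which is the recurrence for convergents. The genuine gap is Step 1: it targets the wrong quantity. By \Cref{thm:HK3} every $\alpha$ already has a canonical representation $\alpha=e\beta_j+f\beta_{j+1}$ supported on \emph{two consecutive} indecomposables, so a ``support diameter'' bound is vacuous. What must be bounded is the position of $\beta_j=\alpha_{i,r}$ inside the block of semiconvergents where $v_k=2$: the paper proves (\Cref{prop:EF}, via \Cref{lem:EF1,lem:EF2,lem:V2}) that $p_K(2\beta_j|\ind)=\min\{r+1,u_{i+2}-r+1\}$ and $p_K(\beta_j+\beta_{j+1}|\ind)=\min\{r+1,u_{i+2}-r\}$, whence $p_K(\alpha|\ind)\leq m$ forces $r\leq m-1$ or $u_{i+2}-r\leq m$. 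Without this, your Step 3 cannot deliver $\Delta^{3/2}$: the sharp estimates needed are $\Nm(\alpha)<\sqrt{\Delta}\bigl((r+1)e+(r+2)f\bigr)(e+f)$ (\Cref{lem:Na3}) and a mirror version with $u_{i+2}-r$ in place of $r$ (\Cref{lem:Na4}, the paper's new technical ingredient), and the factor $r+1$ can be as large as $u_{i+2}\sim\sqrt{\Delta}$, so with only $e,f=O(m\sqrt{\Delta})$ you would get $O(m^2\Delta^2)$. Your alternative of expanding $\Nm(\alpha)=\sum_{i,j}c_ic_j\alpha_i\alpha_j'$ using $\Nm(\alpha_i)\leq\Delta$ and $u_i<\sqrt{\Delta}$ also fails, because the cross terms $\alpha_i\alpha_j'$ are not norms and are not controlled by those two bounds; one needs the explicit formula of \Cref{lem:Na2}.

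A second, independent problem is your claim that the set of decompositions is the reachability class under single merge/split moves. This is false: if $v_{j-1}=v_j=v_{j+1}=2$, then $2\beta_j=\beta_{j-2}+\beta_{j+2}$ is a decomposition (\Cref{lem:EF1}), but starting from $(2\beta_j)$ the only available move chain gives $(\beta_{j-1},\beta_{j+1})$, after which no merge or split applies, so $(\beta_{j-2},\beta_{j+2})$ is unreachable while staying inside nonnegative decompositions. What is true is the weaker statement that the lattice of $\Z$-relations is generated by $\beta_{k-1}-v_k\beta_k+\beta_{k+1}=0$ (second half of \Cref{thm:HK3}); the paper's counting of decompositions of $2\beta_j$ and $\beta_j+\beta_{j+1}$ therefore proceeds by direct total-positivity comparisons (\Cref{lem:V2}), not by move-connectivity, and any polytope/lattice-point formulation you set up would have to be built on the generation-of-relations statement rather than on the move calculus.
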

\begin{proof}
This will be proved as \Cref{thm:N'}.
\end{proof}

The norm of a convergent is bounded by $ 2D^{1/2} $, the norm of an indecomposable by $ D $, and the norm of a uniquely decomposable element by a constant times $ D^{3/2} $. The theorem states that for a fixed $ m $, the norm of an element $ \alpha \in \O_K^+ $ such that $ p_K(\alpha|\ind) \leq m $ is also bounded by a constant times $ D^{3/2} $. Let us compare this with a bound for the norm of an $ \alpha $ such that $ p_K(\alpha) = m $. We will prove in \Cref{thm:Np} that if $ m \geq 2 $, then $ \Nm(\alpha) $ is bounded by a constant (depending only on $ m $) times $ \sqrt{D} $. This is not true for $ m = 1 $, when $ \alpha $ is indecomposable. 

In \Cref{thm:D2}, we characterize all the elements $ \alpha \in \O_K^+ $ which can be represented as a sum of indecomposables in exactly $ 2 $ ways, i.e., $ p_K(\alpha|\ind) = 2 $. We obtain an improved bound for the norm of these elements in \Cref{thm:N2}.

Let us say something about the importance of the problems considered in this paper. Proving that $ p_K $ has certain property for a fixed $ K $ is usually hard. For example, the problem whether there exist infinitely many distinct values $ p_K(\alpha) $ such that $ p_K(\alpha) \equiv 0 \pmod{m} $, where $ m \in \Z_{\geq 2} $, is wide open. Thus, it was of interest to find some weaker property of the range that we could prove. This naturally led to the question of deciding whether a given $ m $ belongs to the range of $ p_K $. An analogous question for the integer partition function reduces to a finite computation but in the setting where $ K $ varies, it becomes non-trivial. Our approach was to first study partitions into indecomposables because the other partitions can be built from them by combining the indecomposable parts together. Since the only indecomposable element in $ \Z_{\geq 1} $ is $ 1 $, studying integer partitions with indecomposable parts would not be very interesting. The new results in this paper, e.g., \Cref{thm:S1} and \Cref{thm:E6}, show how certain properties of $ p_K $ for $ K = \Q(\sqrt{D}) $ depend on the continued fraction of $ \omega_D $ (defined in \Cref{sec:Prelim}).

The rest of the paper is organized as follows. In \Cref{sec:Prelim}, we collect the preliminaries about continued fractions and indecomposables. In \Cref{sec:Ind}, we find all possible representations of elements in a certain specific form as a sum of indecomposables. These results are applied to prove \Cref{thm:S} in \Cref{sec:S} and \Cref{thm:N} in \Cref{sec:N}. Elements $ \alpha $ such that $ p_K(\alpha|\ind) = 2 $ are characterized in \Cref{sec:D2} and $ \calD(6) $ is determined in \Cref{sec:PK}.

\section*{Acknowledgments}
I would like to thank my PhD advisor V\'{i}t\v{e}zslav Kala for his guidance and support. I also thank Se Wook Jang for sending me his unpublished manuscript \cite{Jan} and the anonymous referee for useful comments, especially the suggestion to also provide an upper bound for the norm of an element with $ m $ partitions.

\section{Preliminaries}
\label{sec:Prelim}

We always work in a real quadratic field $ K = \Q(\sqrt{D}) $, where $ D \in \Z_{\geq 2} $ is squarefree. The Galois conjugate of $ \alpha \in K $ is denoted by $ \alpha' $, and we let $ \Nm(\alpha) = \alpha\alpha' $ and $ \Tr(\alpha) = \alpha+\alpha' $ be the \emph{norm} and \emph{trace} of $ \alpha $. The element $ \alpha $ is \emph{totally positive} if $ \alpha > 0 $ and $ \alpha' > 0 $, and  we denote this fact by $ \alpha \succ 0 $. We also write $ \alpha \succ \beta $ if $ \alpha-\beta \succ 0 $ and $ \alpha \succeq \beta $ if $ \alpha \succ \beta $ or $ \alpha = \beta $. The relation $ \succeq $ is a partial order on $ K $.

The pair $ (1, \omega_D) $, where
\[
	\omega_D := \begin{cases}
		\sqrt{D}&\text{if $ D \equiv 2, 3 \pmod{4} $,}\\
		\frac{1+\sqrt{D}}{2}&\text{if $ D \equiv 1 \pmod{4} $,}
	\end{cases}
\]
is a $ \Z $-basis of $ \O_K $. We also set
\[
	\xi_D := -\omega_D' = \begin{cases}
		\sqrt{D}&\text{if $ D \equiv 2, 3 \pmod{4} $,}\\
		\frac{\sqrt{D}-1}{2}&\text{if $ D \equiv 1 \pmod{4} $,}
	\end{cases}
\]
and $ \sigma_D := \lfloor\xi_D\rfloor+\omega_D $. Next, we describe how to characterize the indecomposable elements of $ K $ in terms of the continued fraction of $ \sigma_D $, following the exposition in \cite[p. 3]{HK}. The continued fraction expansion $ \sigma_D = [\overline{u_0,u_1,\dots,u_{s-1}}] $ is purely periodic. We have $ \lfloor\omega_D\rfloor = \lceil\frac{u_0}{2}\rceil = \frac{u_0+\Tr(\omega_D)}{2} $, hence $ \omega_D $ has a continued fraction expansion $ \omega_D = [\lceil u_0/2 \rceil, \overline{u_1, \dots, u_s}] $, where $ u_s = u_0 $. Let $ \frac{p_i}{q_i} $ be the \emph{convergents} to $ \omega_D $ defined by $ \frac{p_i}{q_i} := [\lceil u_0/2 \rceil, u_1, \dots, u_i] $ for $ i \geq 0 $. We have the recurrence relations
\begin{align*}
	p_{i+2}& = u_{i+2}p_{i+1}+p_i,\\
	q_{i+2}& = u_{i+2}q_{i+1}+q_i
\end{align*}
for $ i \geq -1 $ with the initial conditions $ (p_{-1},q_{-1}) = (1,0) $ and $ (p_0,q_0) = \left(\lceil u_0/2\rceil, 1\right) $. We let $ \alpha_i := p_i+q_i\xi_D $ for $ i \geq -1 $ and $ \alpha_{i,r} := \alpha_i+r\alpha_{i+1} $ for $ r \in \Z_{\geq 0} $. By an abuse of terminology, the $ \alpha_i $'s are also called \emph{convergents} and the $ \alpha_{i,r} $'s are called \emph{semiconvergents}. The sequence $ (\alpha_i) $ satisfies
\[
	\alpha_{i+2} = u_{i+2}\alpha_{i+1}+\alpha_i
\]
for $ i \geq -1 $ with the initial conditions $ \alpha_{-1} = 1 $ and $ \alpha_0 = \lceil u_0/2 \rceil+\xi_D = \lfloor \omega_D \rfloor+\xi_D $. It follows that $ \alpha_{i,u_{i+2}} = \alpha_{i+2,0} $. We have $ \alpha_i \succ 0 $ if and only if $ i $ is odd. Dress and Scharlau \cite{DS} proved that all the indecomposable elements of $ K $ are $ \alpha_{i,r} $, where $ i \geq -1 $ is odd and $ 0 \leq r \leq u_{i+2}-1 $ together with their conjugates $ \alpha_{i,r}' $.

Let $ \varepsilon > 1 $ denote the fundamental unit in $ \O_K $ and $ \varepsilon^+ > 1 $ the smallest totally positive unit in $ \O_K $. We have $ \varepsilon = \alpha_{s-1} $ and
\[
	\varepsilon^+ = \begin{cases}
		\varepsilon = \alpha_{s-1}&\text{if $ s $ is even,}\\
		\varepsilon^2 = \alpha_{2s-1}&\text{if $ s $ is odd.}
	\end{cases}
\]
It is not difficult to show that $ \varepsilon\alpha_i = \alpha_{s+i} $, hence $ \varepsilon^+\alpha_i = \alpha_{s+i} $ if $ s $ is even and $ \varepsilon^+\alpha_i = \alpha_{2s+i} $ if $ s $ is odd. Consequently, there are only finitely many indecomposables up to multiplication by totally positive units.

If we let
\[
	\calA := \{\alpha_{i,r}\,\vert\,i\geq -1\text{ odd}, 0 \leq r \leq u_{i+2}-1\}\setminus\{1\}
\]
and $ \calA' := \{\alpha'\,|\,\alpha\in\calA\} $, then the set of indecomposables equals $ \calI := \calA'\cup\{1\}\cup\calA $. The indecomposables $ \alpha_{i,r} $ get larger when $ (i,r) $ increases with respect to the lexicographical order, while $ \alpha_{i,r}' $ get smaller. We order the elements of $ \calI $ into a two-sided sequence
\[
	\dots < \beta_{-2} < \beta_{-1} < \beta_0 = 1 < \beta_1 < \beta_2 < \dots,
\] 
so that $ \beta_{-j} = \beta_j' $ for $ j \in \Z $.

\begin{lemma}[{\cite[Lemma 1]{HK}}]
\label{lem:vj}
For each $ j \in \Z $ we have that
\[
	v_j\beta_j = \beta_{j-1}+\beta_{j+1},
\]
where
\[
	v_j := \begin{cases}
		2&\text{if $ \beta_{|j|} = \alpha_{i,r} $ with odd $ i \geq -1 $ and $ 1 \leq r \leq u_{i+2}-1 $,}\\
		u_{i+1}+2&\text{if $ \beta_{|j|} = \alpha_{i,0} $ with odd $ i \geq -1 $.}
	\end{cases}
\]
\end{lemma}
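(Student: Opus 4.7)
My plan is to reduce the identity to the case $j \geq 0$ via Galois symmetry and then split into cases according to the position of $\beta_j$ among the blocks of semiconvergents.

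First I would observe that $\beta_{-j} = \beta_j'$ and $v_{-j} = v_j$, so applying the Galois automorphism to the identity for $-j$ recovers the identity for $j$. Hence it suffices to treat $j \geq 0$, where $\beta_j$ is either $1$ or a semiconvergent $\alpha_{i,r}$ with $i \geq -1$ odd and $0 \leq r \leq u_{i+2}-1$.

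Next I would split into two cases. In the \emph{interior case} $r \geq 1$, the immediate predecessor and successor of $\beta_j$ in the ordering of $\calI$ are $\alpha_{i,r-1}$ and $\alpha_{i,r+1}$, using at the right boundary $r = u_{i+2}-1$ the identification $\alpha_{i,u_{i+2}} = \alpha_{i+2,0}$, which is exactly the recurrence $\alpha_{i+2} = u_{i+2}\alpha_{i+1} + \alpha_i$. Expanding,
\[
    \alpha_{i,r-1} + \alpha_{i,r+1} = 2\alpha_i + 2r\alpha_{i+1} = 2\alpha_{i,r},
\]
which gives $v_j = 2$. In the \emph{block-boundary case} $r = 0$ and $i \geq 1$, the predecessor of $\beta_j = \alpha_i$ is the top of the previous block, $\alpha_{i-2,\,u_i-1}$ (using $\alpha_{i-2,u_i} = \alpha_i$), and the successor is $\alpha_{i,1}$. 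I would then expand
\[
    \beta_{j-1} + \beta_{j+1} = \alpha_{i-2} + (u_i-1)\alpha_{i-1} + \alpha_i + \alpha_{i+1}
\]
and collapse the right-hand side by applying $\alpha_i = \alpha_{i-2} + u_i\alpha_{i-1}$ and $\alpha_{i+1} - \alpha_{i-1} = u_{i+1}\alpha_i$, obtaining $(u_{i+1}+2)\alpha_i$, as required.

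The base case $j = 0$ (where $\beta_0 = 1 = \alpha_{-1}$) I would handle by direct computation: whether $u_1 = 1$ or $u_1 \geq 2$, one finds $\beta_1 + \beta_{-1} = 2 + \Tr(\alpha_0)$, and the identity $\Tr(\alpha_0) = u_0$ follows from $\alpha_0 = \lceil u_0/2 \rceil + \xi_D$ together with $\Tr(\xi_D) = -\Tr(\omega_D)$, checked in both residue classes of $D$ modulo $4$.

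The only real obstacle is combinatorial bookkeeping: identifying the ordered neighbors of $\beta_j$ in $\calI$ correctly at the transitions between blocks of semiconvergents (where a single element has two natural descriptions $\alpha_{i,u_{i+2}} = \alpha_{i+2,0}$), and pinning down the base case at $j = 0$. Once this indexing is settled, a single application of the recurrence defining the convergents finishes the proof.
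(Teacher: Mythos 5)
Your argument is correct. Note that the paper itself gives no proof of this lemma: it is imported verbatim as \cite[Lemma 1]{HK}, so there is no internal argument to compare against; your proposal is essentially the natural direct verification that the cited source carries out. The three cases check out: for $1 \leq r \leq u_{i+2}-1$ the neighbors are $\alpha_{i,r\pm 1}$ (with $\alpha_{i,u_{i+2}} = \alpha_{i+2,0}$ at the top of a block) and linearity of $r \mapsto \alpha_{i,r}$ gives $v_j = 2$; for $r=0$, $i\geq 1$ the sum $\alpha_{i-2,u_i-1}+\alpha_{i,1}$ collapses via $\alpha_i = u_i\alpha_{i-1}+\alpha_{i-2}$ and $\alpha_{i+1}-\alpha_{i-1} = u_{i+1}\alpha_i$ to $(u_{i+1}+2)\alpha_i$; and at $j=0$ the computation $\beta_{-1}+\beta_1 = 2+\Tr(\alpha_0) = 2+u_0$ is right in both residue classes (using that $u_0 = 2\lfloor\omega_D\rfloor-\Tr(\omega_D)$ is even when $D \equiv 2,3 \pmod 4$). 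The Galois reduction to $j \geq 0$ is also sound, since $v_{-j}=v_j$ by definition and $\beta_{-j}=\beta_j'$. The only ingredient you take for granted is that the elements of $\calI$ are ordered by the lexicographic order on $(i,r)$, so that the ordered neighbors of $\beta_j$ are the ones you name; this is exactly the ordering fact stated without proof in the preliminaries of the paper, so relying on it is consistent with the framework here, though a self-contained proof would need to justify that monotonicity as well.
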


Much of our paper is motivated by the following result of Hejda and Kala \cite{HK}. It was proved independently by Se Wook Jang in his unpublished manuscript \cite{Jan}.

\begin{theorem}[{\cite[Theorem 3]{HK}}]
\label{thm:HK3}
If $ \alpha \in \O_K^+ $, then there exist unique $ j_0, e, f \in \Z $ with $ e \geq 1 $ and $ f \geq 0 $ such that $ \alpha = e\beta_{j_0}+f\beta_{j_0+1} $.

Every relation of the form $ \sum h_j\beta_j= 0 $ (with $ h_j \in \Z $ and only finitely many non-zero) is a $ \Z $-linear combination of the relations $ \beta_{j-1}-v_j\beta_j+\beta_{j+1} = 0 $, where $ j \in \Z $.
\end{theorem}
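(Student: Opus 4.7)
The plan is to establish both statements on the back of one structural fact: every consecutive pair $\{\beta_j,\beta_{j+1}\}$ is a $\Z$-basis of $\O_K$. To prove this, I would introduce the ``Galois determinant'' $D_j := \beta_j\beta_{j+1}' - \beta_j'\beta_{j+1}$, then plug the recurrence $\beta_{j+1} = v_j\beta_j - \beta_{j-1}$ of \Cref{lem:vj} into its definition to get $D_j = D_{j-1}$ by a one-line computation. So $D_j$ is independent of $j$, and evaluating at $j = 0$ gives $D_0 = \beta_1' - \beta_1$. Since $\{1,\beta_1\}$ spans $\O_K$ as a $\Z$-module (a direct check on the two cases $\beta_1 = 1+\alpha_0$ or $\beta_1 = \alpha_{1,0}$), this yields $D_j^2 = \Delta$ for all $j$, forcing $\{\beta_j,\beta_{j+1}\}$ itself to be a $\Z$-basis of $\O_K$.

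For existence and uniqueness of the representation $\alpha = e\beta_{j_0}+f\beta_{j_0+1}$, I would embed $K \hookrightarrow \R^2$ by $\alpha \mapsto (\alpha,\alpha')$ and track the slope $\theta(\alpha) := \alpha'/\alpha$ on totally positive elements. The sequence $\theta_j := \theta(\beta_j)$ is strictly decreasing in $j$ (since $\beta_j$ is increasing and $\beta_j'$ is decreasing, both positive, using $\beta_{-j} = \beta_j'$ and the presence of the totally positive unit $\varepsilon^+$ to control the limits), with $\theta_j \to +\infty$ as $j \to -\infty$ and $\theta_j \to 0$ as $j \to +\infty$. Hence for any $\alpha \succ 0$ there is a unique $j_0$ with $\theta_{j_0+1}\le\theta(\alpha)<\theta_{j_0}$. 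Using the $\Z$-basis property from Step~1, write $\alpha = e\beta_{j_0}+f\beta_{j_0+1}$ uniquely with $e,f\in\Z$, and read off via Cramer's rule
\[
	e = \frac{\alpha\beta_{j_0+1}' - \alpha'\beta_{j_0+1}}{D_{j_0}}, \qquad f = \frac{\alpha'\beta_{j_0} - \alpha\beta_{j_0}'}{D_{j_0}}.
\]
The slope inequalities then translate to $e \ge 1$ and $f \ge 0$, and uniqueness of $(j_0,e,f)$ follows from uniqueness of the basis representation within a fixed window together with strict monotonicity of the $\theta_j$.

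For the second assertion, I would induct on the support width $j^*-j_*$ of a finite relation $\sum_j h_j\beta_j = 0$. If the width is $1$, the $\Z$-basis property from Step~1 immediately forces the two coefficients to vanish. If the width is at least $2$, subtracting $h_{j^*}$ times the relation $R_{j^*-1}: \beta_{j^*-2} - v_{j^*-1}\beta_{j^*-1} + \beta_{j^*} = 0$ kills the top-index term without introducing new indices below $j_*$ (because $j^*-2 \ge j_*$), so the support shrinks to $\{j_*,\dots,j^*-1\}$ of strictly smaller width. Iterating proves the relation is a $\Z$-linear combination of the $R_k$.

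The main obstacle is the sign bookkeeping in the middle step: one must verify that the half-open interval $[\theta_{j_0+1},\theta_{j_0})$ is chosen on the correct side so that both $e \ge 1$ and $f \ge 0$ fall out with the right sign after dividing by $D_{j_0}$, and in particular that the boundary element $\alpha = \beta_{j_0}$ is placed unambiguously in window $j_0$ with $(e,f) = (1,0)$ rather than in window $j_0-1$ with $(e,f) = (0,1)$. Everything else amounts to direct algebraic manipulation of the three-term recurrence.
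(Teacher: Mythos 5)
The paper does not prove this statement at all: it is imported verbatim as \cite[Theorem 3]{HK} (with a nod to Jang's manuscript), so there is no internal proof to compare against. Judged on its own, your argument is essentially a correct, self-contained proof, and it runs along the same geometric lines as the original Hejda--Kala argument: the invariance of $D_j=\beta_j\beta_{j+1}'-\beta_j'\beta_{j+1}$ under the three-term recurrence of \Cref{lem:vj} is a clean way to get that every consecutive pair $\{\beta_j,\beta_{j+1}\}$ is a $\Z$-basis (the identification of the two possibilities $\beta_1=1+\alpha_0$ or $\beta_1=\alpha_{1,0}$, hence $\beta_1\in\Z+\xi_D$ and $D_0^2=\Delta$, is right), the slope argument with $\theta(\alpha)=\alpha'/\alpha$ decomposes $\O_K^+$ into the cones spanned by consecutive indecomposables, and the width-reduction induction for the relations is exactly the standard elimination of the top index using $\beta_{j^*-2}-v_{j^*-1}\beta_{j^*-1}+\beta_{j^*}=0$, with the base case settled by linear independence of a consecutive pair.

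The one point that needs fixing is the orientation you yourself flagged: as written, the window $[\theta_{j_0+1},\theta_{j_0})$ is on the wrong side. With that choice the boundary element $\beta_{j_0}$ falls into window $j_0-1$ and comes out as $(e,f)=(0,1)$, i.e.\ you get $e\ge 0$, $f\ge 1$ instead of $e\ge 1$, $f\ge 0$. The correct convention is $\theta_{j_0+1}<\theta(\alpha)\le\theta_{j_0}$; then, since $D_{j_0}<0$ (because $\theta_{j_0+1}<\theta_{j_0}$), Cramer's rule gives $f=\alpha\beta_{j_0}\left(\theta(\alpha)-\theta_{j_0}\right)/D_{j_0}\ge 0$ and $e=\alpha\beta_{j_0+1}\left(\theta_{j_0+1}-\theta(\alpha)\right)/D_{j_0}>0$, hence $e\ge 1$ by integrality, and uniqueness follows since $e\ge 1$, $f\ge 0$ forces $\theta(\alpha)$ back into that half-open window, and the windows are pairwise disjoint. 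You should also make explicit why $\theta_j\to 0$ as $j\to+\infty$ (and $\to\infty$ as $j\to-\infty$): either use the periodicity $\beta_{j+p}=\varepsilon^+\beta_j$ with $(\varepsilon^+)'/\varepsilon^+<1$, as you hint, or the uniform bound on norms of indecomposables; either closes the gap. With these routine adjustments the proof is complete.
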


The uniquely decomposable elements are characterized as follows.

\begin{theorem}[{\cite[Theorem 4]{HK}}]
\label{thm:D1}
Let $ \alpha \in \O_K^{+} $ and $ j \in \Z $, $ e \in \Z_{\geq 1} $, $ f \in \Z_{\geq 0} $ such that $ \alpha = e\beta_j+f\beta_{j+1} $. We have $ p_K(\alpha|\ind) = 1 $ if and only if
\[
	1 \leq e \leq v_j-1,\quad 0 \leq f \leq v_{j+1}-1\quad\text{and}\quad (e,f) \neq (v_j-1, v_{j+1}-1).
\]
\end{theorem}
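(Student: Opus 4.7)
The plan is to prove the two directions of \Cref{thm:D1} separately, relying on \Cref{lem:vj} and both parts of \Cref{thm:HK3}.

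For \emph{necessity}, I would argue by contrapositive: whenever one of the three conditions on $(e,f)$ fails, a second partition of $\alpha$ into indecomposables is obtained by a single application of the basic relation $v_k \beta_k = \beta_{k-1} + \beta_{k+1}$ from \Cref{lem:vj}. Concretely, if $e \geq v_j$ one rewrites $v_j \beta_j$ as $\beta_{j-1} + \beta_{j+1}$ and produces
$$\alpha = \beta_{j-1} + (e - v_j)\beta_j + (f+1)\beta_{j+1};$$
if $f \geq v_{j+1}$ the analogous substitution gives
$$\alpha = (e+1)\beta_j + (f - v_{j+1})\beta_{j+1} + \beta_{j+2};$$
and if $(e,f) = (v_j - 1, v_{j+1} - 1)$, combining both relations yields the partition $\alpha = \beta_{j-1} + \beta_{j+2}$. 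In each case the right-hand side is a partition distinct from $e\beta_j + f\beta_{j+1}$, so $p_K(\alpha|\calI) \geq 2$.

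For \emph{sufficiency}, I assume the three conditions hold and take an arbitrary partition $\alpha = \sum_k c_k \beta_k$ with $c_k \in \Z_{\geq 0}$, only finitely many nonzero. Writing $\delta_j := e$, $\delta_{j+1} := f$ and $\delta_k := 0$ otherwise, the second assertion of \Cref{thm:HK3} applied to $\sum_k (c_k - \delta_k)\beta_k = 0$ produces integers $n_k$, finitely many nonzero, with
$$c_k - \delta_k = n_{k-1} - v_k n_k + n_{k+1} \qquad \text{for every } k \in \Z,$$
and the task reduces to showing $n_k = 0$ for all $k$. Supposing otherwise, I set $R := \max\{k : n_k \neq 0\}$ and $L := \min\{k : n_k \neq 0\}$, and split into three cases according to the location of $[L, R]$ relative to $\{j, j+1\}$.

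If $R \geq j + 2$, the identity $c_{R+1} = n_R \geq 0$ first gives $n_R \geq 1$, and the inequalities $c_k \geq 0$ for $k = R, R-1, \dots, j+2$ together with $v_k \geq 2$ produce inductively a strictly increasing chain $n_{j+1} > n_{j+2} > \dots > n_R \geq 1$. The terminal inequalities $c_j, c_{j+1} \geq 0$ then combine with the upper bounds $e \leq v_j - 1$ and $f \leq v_{j+1} - 1$ to force $v_j(v_{j+1} - 1)(n_{j+1} - 1) \leq n_{j+1} - 1$, which together with $n_{j+1} \geq 2$ and $v_j, v_{j+1} \geq 2$ is a contradiction. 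The case $L \leq j - 1$ is mirror-symmetric, and the doubly-extended case uses the same template augmented by the contribution from the left. Finally, when $\{L, \dots, R\} \subseteq \{j, j+1\}$, the equations $c_{j-1} = n_j$ and $c_{j+2} = n_{j+1}$ pin $n_j, n_{j+1} \geq 0$, and the remaining inequalities $c_j, c_{j+1} \geq 0$ together with $e \leq v_j - 1$, $f \leq v_{j+1} - 1$ force $(n_j, n_{j+1}) = (0,0)$ except in the excluded configuration $(e,f) = (v_j - 1, v_{j+1} - 1)$. The \emph{main obstacle} is the careful bookkeeping of these inequalities around $k = j, j+1$, where all three hypotheses on $(e,f)$ enter in a coordinated fashion: the upper bound on $e$ activates at $k = j$, the upper bound on $f$ activates at $k = j+1$, and the exclusion $(e,f) \neq (v_j - 1, v_{j+1} - 1)$ handles the boundary case in which the two inequalities degenerate to equalities simultaneously.
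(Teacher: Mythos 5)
The paper itself gives no proof of \Cref{thm:D1} --- it is quoted verbatim from \cite[Theorem 4]{HK} --- so your proposal is a self-contained reconstruction from \Cref{lem:vj} and \Cref{thm:HK3}, which is indeed the natural route: necessity by exhibiting a second decomposition through the relations $v_k\beta_k=\beta_{k-1}+\beta_{k+1}$ (these are exactly the rewritings the paper later uses in \Cref{lem:D2nec}), and sufficiency by expressing the difference of two partitions as a $\Z$-combination of the basic relations and showing the coefficients $n_k$ vanish. The necessity half is correct, and so are the one-sided cases ($L\ge j$, $R\ge j+2$, where your product $v_j(v_{j+1}-1)\ge 2$ does give a contradiction, together with its mirror) and the contained case $\{L,\dots,R\}\subseteq\{j,j+1\}$.

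The one genuine thin spot is the doubly-extended case $L\le j-1$ and $R\ge j+2$: there the ``same template'' does \emph{not} close the argument by itself. Both chains give $n_{j-1}\le n_j-1$, $n_{j+2}\le n_{j+1}-1$ and $n_j,n_{j+1}\ge 2$, and the inequalities $c_j\ge 0$, $c_{j+1}\ge 0$ combined with $e\le v_j-1$, $f\le v_{j+1}-1$ now yield only $(v_j-1)(n_j-1)\le n_{j+1}-1$ and $(v_{j+1}-1)(n_{j+1}-1)\le n_j-1$, hence $(v_j-1)(v_{j+1}-1)\le 1$, which is not absurd when $v_j=v_{j+1}=2$. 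You must then push into the equality case: with $v_j=v_{j+1}=2$ one gets $n_j=n_{j+1}$, and tracing equality back forces $e=v_j-1$ and $f=v_{j+1}-1$, so it is the exclusion $(e,f)\neq(v_j-1,v_{j+1}-1)$ --- not the upper bounds --- that kills this case. This extra step is unavoidable: if $(e,f)=(v_j-1,v_{j+1}-1)$ and $v_{j-1}=v_j=v_{j+1}=v_{j+2}=2$, then $(n_{j-1},n_j,n_{j+1},n_{j+2})=(1,2,2,1)$ satisfies all the constraints and produces the genuine partition $\beta_{j-2}+\beta_{j+3}$ of $\beta_j+\beta_{j+1}$ (compare \Cref{lem:EF1} with $t=1$, $k_0=1$), i.e.\ a doubly-extended solution really exists in the excluded configuration. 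Your closing sentence does say the exclusion handles the simultaneous-equality boundary, but as written you attribute that role only to the contained case; once the doubly-extended case explicitly invokes it as above, the proof is complete.
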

\begin{proof}
This is an equivalent statement of \cite[Theorem 4]{HK}, see also \cite[(8)]{HK}.
\end{proof}

Finally, we collect some elementary properties of partitions and the partition function in $ K $. A partition $ \lambda_1+\lambda_2+\dots+\lambda_{\ell} $ will also be denoted by $ (\lambda_1,\lambda_2,\dots,\lambda_{\ell}) $. Let us stress that this will always be viewed as an unordered tuple. The partition function satisfies $ p_K(\alpha') = p_K(\alpha) $ and if $ \eta $ is a totally positive unit, then $ p_K(\eta\alpha) = p_K(\alpha) $. The same is true for $ p_K(\cdot|\ind) $. We also have $ p_K(\alpha) < p_K(\beta) $ and $ p_K(\alpha|\ind) \leq p_K(\beta|\ind) $ if $ \alpha \prec \beta $.

\section{Partitions with indecomposable parts}
\label{sec:Ind}

From \Cref{thm:HK3}, we know that every $ \alpha \in \O_K^+ $ can be expressed in the form $ \alpha = e\beta_{j}+f\beta_{j+1} $ for a (unique) $ j \in \Z $. In this section, we determine $ p_K(\alpha|\ind) $ for particular choices of $ (e, f) $. This is an intermediate step in proving some of our theorems. Throughout the paper, $ v_j \in \Z_{\geq 2} $ for $ j \in \Z $ are the numbers from \Cref{lem:vj}.

\begin{lemma}
\label{lem:EF1}
Let $ t \in \Z_{\geq 0} $. If there exists $ k_0 \in \Z_{\geq 0} $ such that $ v_k = 2 $ for $ k \in \{j-k_0, \dots, j+k_0+t\} $, then
\[
	\beta_j+\beta_{j+t} = \beta_{j-1}+\beta_{j+1+t} = \dots = \beta_{j-k_0-1}+\beta_{j+k_0+1+t}.
\]
\end{lemma}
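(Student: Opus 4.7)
The plan is to read the condition $v_k = 2$ as an arithmetic-progression relation. By \Cref{lem:vj}, $v_k = 2$ means $2\beta_k = \beta_{k-1} + \beta_{k+1}$, which rearranges to
\[
\beta_{k+1} - \beta_k = \beta_k - \beta_{k-1}.
\]
Setting $d_k := \beta_{k+1} - \beta_k$, the hypothesis that $v_k = 2$ for every $k \in \{j - k_0, \dots, j + k_0 + t\}$ yields $d_{k-1} = d_k$ on that whole range. Iterating these equalities, I conclude that $d_k$ takes a single common value on the slightly enlarged index set $\{j - k_0 - 1, j - k_0, \dots, j + k_0 + t\}$.

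Next I would observe that each desired step
\[
\beta_{j-i} + \beta_{j+i+t} = \beta_{j-i-1} + \beta_{j+i+1+t}, \qquad 0 \leq i \leq k_0,
\]
rearranges to $\beta_{j-i} - \beta_{j-i-1} = \beta_{j+i+1+t} - \beta_{j+i+t}$, that is,
\[
d_{j-i-1} = d_{j+i+t}.
\]
For $0 \le i \le k_0$, both subscripts $j-i-1$ and $j+i+t$ lie in the range $\{j-k_0-1,\dots,j+k_0+t\}$ on which $d$ was just shown to be constant, so the equality is automatic. Chaining these $k_0 + 1$ individual identities for $i = 0, 1, \dots, k_0$ produces the stated chain of equalities.

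Since the whole argument is a telescoping of the recurrence $\beta_{k+1} - \beta_k = \beta_k - \beta_{k-1}$, I do not expect any substantive obstacle; the only thing to keep track of is that the indices needed at the $i$-th step remain inside the range furnished by the hypothesis, which is exactly how $k_0$ was chosen.
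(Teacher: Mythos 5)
Your proof is correct and is essentially the paper's argument in different packaging: the paper telescopes the relations $2\beta_k=\beta_{k-1}+\beta_{k+1}$ by summing them over the block $\{j-k_1,\dots,j+k_1+t\}$, which amounts to the same computation as your observation that the differences $d_k=\beta_{k+1}-\beta_k$ are constant on $\{j-k_0-1,\dots,j+k_0+t\}$, so that each step is $d_{j-i-1}=d_{j+i+t}$. Your reformulation is marginally cleaner in that it treats the case $(i,t)=(0,0)$ uniformly, which the paper's summation handles as a separate special case.
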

\begin{proof}
By \Cref{lem:vj}, we have $ 2\beta_k = \beta_{k-1}+\beta_{k+1} $ for $ k \in \{j-k_0, \dots, j+k_0+t\} $. Let $ k_1 \in \{0, \dots, k_0\} $. If $ (k_1,t) \neq (0,0) $, then summing over $ k \in \{j-k_1, \dots, j+k_1+t\} $, we get
\[
	\sum_{k = j-k_1}^{j+k_1+t}2\beta_k = \sum_{k = j-k_1}^{j+k_1+t}\left(\beta_{k-1}+\beta_{k+1}\right) = \beta_{j-k_1-1}+\beta_{j-k_1}+\left(\sum_{k=j-k_1+1}^{j+k_1-1+t}2\beta_k\right)+\beta_{j+k_1+t}+\beta_{j+k_1+1+t},
\]
hence
\[
	\beta_{j-k_1}+\beta_{j+k_1+t} = \beta_{j-k_1-1}+\beta_{j+k_1+1+t}.
\]
But the last equality holds also for $ (k_1,t) = (0,0) $.
\end{proof}

\begin{lemma}
\label{lem:V2}
Let $ \alpha = \beta_{j_1}+\beta_{j_2} $, where $ j_1, j_2 \in \Z $ and $ j_1 \leq j_2 $. If $ \beta_{j_3} \preceq \alpha $ for some $ j_3 \in \Z $ such that $ j_3 > j_2 $ or $ j_3 < j_1 $, then $ v_k = 2 $ for $ k \in \{j_1, \dots, j_2\} $.
\end{lemma}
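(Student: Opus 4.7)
The plan is to first reduce to the case $j_3 > j_2$ using conjugation, translate $\beta_{j_3}\preceq \alpha$ into a single real inequality, and then exploit the monotonicity of the differences $\beta_{k+1}-\beta_k$.

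Since $\beta_{|j|}$ determines $v_j$ by \Cref{lem:vj}, we have $v_k = v_{-k}$ for every $k \in \Z$. Conjugating the inclusion $\beta_{j_3}\preceq \alpha$ sends $(j_1, j_2, j_3)$ to $(-j_2, -j_1, -j_3)$: the indices are reversed in order, and the conclusion $v_k = 2$ for $k \in \{j_1,\dots,j_2\}$ becomes $v_k = 2$ for $k \in \{-j_2,\dots,-j_1\}$, which is the same statement by the symmetry $v_k = v_{-k}$. Thus the case $j_3 < j_1$ reduces to $j_3 > j_2$, and I may assume the latter.

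Under $j_3 > j_2$, the inclusion $\beta_{j_3}\preceq \alpha$ amounts to $\alpha \geq \beta_{j_3}$ and $\alpha' \geq \beta_{j_3}'$ in $\R$. The conjugate inequality is automatic: $\beta_{j_3}' = \beta_{-j_3}$ with $-j_3 < -j_2$, so $\beta_{-j_3} < \beta_{-j_2} \leq \beta_{-j_1} + \beta_{-j_2} = \alpha'$. Moreover $\beta_{j_3} \neq \alpha$, since otherwise the indecomposable $\beta_{j_3}$ would split as a sum of two totally positive elements. Hence the condition collapses to the single strict real inequality
\[
\beta_{j_3} - \beta_{j_2} < \beta_{j_1}.
\]

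Set $d_k := \beta_{k+1} - \beta_k > 0$. The relation $\beta_{k-1} + \beta_{k+1} = v_k\beta_k$ from \Cref{lem:vj} gives $d_k - d_{k-1} = (v_k - 2)\beta_k \geq 0$, so $(d_k)$ is non-decreasing. Therefore $\beta_{j_3} - \beta_{j_2} = \sum_{k=j_2}^{j_3-1} d_k \geq d_{j_2}$, and the previous inequality yields $d_{j_2} < \beta_{j_1}$. Telescoping the increments $d_k - d_{k-1} = (v_k-2)\beta_k$ from $k = j_1$ to $j_2$ and using $d_{j_1-1} = \beta_{j_1} - \beta_{j_1-1}$ rewrites this as
\[
\sum_{k = j_1}^{j_2}(v_k - 2)\beta_k < \beta_{j_1-1}.
\]
If some $v_{k_0} \geq 3$ with $k_0 \in \{j_1, \dots, j_2\}$, the left side is at least $\beta_{k_0} \geq \beta_{j_1} > \beta_{j_1-1}$, a contradiction. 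Hence $v_k = 2$ for every $k \in \{j_1, \dots, j_2\}$.

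The main obstacle I expect is spotting the initial reduction: recognising that once $j_3 \notin [j_1, j_2]$, the conjugate half of $\preceq$ is free, collapsing the problem to a one-dimensional comparison in $\R$. After that, the non-decreasing nature of $(d_k)$, together with the telescoping formula for $d_{j_2}$, essentially forces the conclusion.
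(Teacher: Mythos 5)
Your proof is correct. The two hypotheses you use are the same ones the paper uses — the relation $v_k\beta_k=\beta_{k-1}+\beta_{k+1}$ from \Cref{lem:vj}, the real-embedding bound coming from $\beta_{j_2+1}\leq\beta_{j_3}\leq\alpha$, and the conjugation symmetry $v_{-k}=v_k$ to reduce $j_3<j_1$ to $j_3>j_2$ — but you organize the conclusion differently. The paper runs a downward induction from $k=j_2$ to $k=j_1$, carrying the invariant $v_k=2$ and $\beta_k\leq\beta_{j_1}+\beta_{k-1}$ and deriving a contradiction at each step where $v_k>2$; you instead introduce the differences $d_k=\beta_{k+1}-\beta_k$, observe $d_k-d_{k-1}=(v_k-2)\beta_k\geq 0$, and telescope once to the single inequality $\sum_{k=j_1}^{j_2}(v_k-2)\beta_k<\beta_{j_1-1}$, from which all the equalities $v_k=2$ drop out simultaneously. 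Your aggregation is essentially the paper's induction unrolled, but it is a bit cleaner: one inequality replaces the step-by-step contradictions, and it makes transparent exactly how much room the hypothesis $\beta_{j_3}\preceq\alpha$ leaves (namely less than $\beta_{j_1-1}$). Minor remarks: the observation that the conjugate inequality $\alpha'\geq\beta_{j_3}'$ is automatic is not needed (the hypothesis already grants it), and the strictness obtained from $\beta_{j_3}\neq\alpha$ is likewise inessential, since a non-strict version of your final inequality still contradicts $\beta_{k_0}\geq\beta_{j_1}>\beta_{j_1-1}$; neither point affects correctness.
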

\begin{proof}
We may assume that $ \beta_{j_3} \preceq \alpha $ for some $ j_3 > j_2 $ by considering $ \alpha' $ and $ \beta_{j_3}' $. We have $ \beta_{j_2+1} \leq \beta_{j_3} \leq \alpha $. From \Cref{lem:vj}, we get
\[
	v_{j_2}\beta_{j_2} = \beta_{j_2-1}+\beta_{j_2+1} \leq \beta_{j_2-1}+\beta_{j_1}+\beta_{j_2},
\]
hence
\[
	(v_{j_2}-1)\beta_{j_2} \leq \beta_{j_1}+\beta_{j_2-1}.
\]
If $ v_{j_2} > 2 $, then
\[
	(v_{j_2}-1)\beta_{j_2} \geq 2\beta_{j_2} > \beta_{j_1}+\beta_{j_2-1},
\]
a contradiction. Thus, $ v_{j_2} = 2 $ and $ \beta_{j_2} \leq \beta_{j_1}+\beta_{j_2-1} $.

We claim that
\begin{equation}
\label{eq:V2}
	v_k = 2\quad\text{and}\quad \beta_k \leq \beta_{j_1}+\beta_{k-1},\qquad k \in \{j_1, \dots, j_2\}. 
\end{equation}
We showed this for $ k = j_2 $. Assume that \eqref{eq:V2} holds for $ k \in \{j_1+1, \dots, j_2\} $. From \Cref{lem:vj}, we get
\[
	v_{k-1}\beta_{k-1} = \beta_{k-2}+\beta_k \leq \beta_{k-2}+\beta_{j_1}+\beta_{k-1},
\]
hence
\[
	(v_{k-1}-1)\beta_{k-1} \leq \beta_{j_1}+\beta_{k-2}.
\]
If $ v_{k-1} > 2 $, then
\[
	(v_{k-1}-1)\beta_{k-1} \geq 2\beta_{k-1} > \beta_{j_1}+\beta_{k-2},
\]
where we used $ k-1 \geq j_1 $. This is a contradiction, hence $ v_{k-1} = 2 $ and $ \beta_{k-1} \leq \beta_{j_1}+\beta_{k-2} $, which proves \eqref{eq:V2} with $ k-1 $ in place of $ k $.
\end{proof}

\begin{lemma}
\label{lem:EF2}
Let $ t \in \{0,1\} $ and $ \alpha = \beta_j+\beta_{j+t} $. If $ v_j > 2 $ or $ v_{j+t} > 2 $, then $ p_K(\alpha|\ind) = 1 $. On the other hand, if there exists $ k_0 \in \Z_{\geq 0} $ such that $ v_k = 2 $ for $ k \in \{j-k_0, \dots, j+k_0+t\} $ but $ v_{j-k_0-1} > 2 $ or $ v_{j+k_0+1+t} > 2 $, then $ p_K(\alpha|\ind) = k_0+2 $. Moreover, all the partitions of $ \alpha $ with indecomposable parts are $ (\beta_{j-k},\beta_{j+k+t}) $, where $ k \in \{0,1,\dots, k_0+1\} $.
\end{lemma}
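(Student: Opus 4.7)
The plan is to reduce both parts of the lemma to analysing non-negative integer solutions of $\sum_k c_k \beta_k = \alpha$, where the range of $k$ is restricted using \Cref{lem:V2} and then exploited via a linear-algebraic structure coming from the $v$-recurrence.

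For the first assertion, apply \Cref{lem:V2} to the representation $\alpha = \beta_j + \beta_{j+t}$ with $j_1 = j$, $j_2 = j+t$. The hypothesis that $v_j > 2$ or $v_{j+t} > 2$ precludes $v_k = 2$ throughout $\{j, \dots, j+t\}$, so no indecomposable $\beta_{j_3} \preceq \alpha$ has $j_3 \notin [j, j+t]$. Every indecomposable partition therefore has the form $c_j\beta_j + c_{j+1}\beta_{j+1}$ (with $c_{j+1} = 0$ when $t = 0$). Since $\beta_j, \beta_{j+1}$ are $\Q$-linearly independent---a quick consequence of \Cref{thm:HK3}, as a two-term relation among the $\beta$'s cannot be written as a $\Z$-combination of three-term $v$-relations---equating to $\alpha = (2-t)\beta_j + t\beta_{j+1}$ forces $c_j = 2 - t$ and $c_{j+1} = t$, giving $p_K(\alpha|\ind) = 1$.

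For the second assertion, \Cref{lem:EF1} provides the $k_0 + 2$ pairwise distinct two-part partitions $(\beta_{j-k}, \beta_{j+k+t})$ for $k \in \{0, \dots, k_0+1\}$, so it remains to prove no other partitions exist. Apply \Cref{lem:V2} to the extremal representation $\alpha = \beta_{j-k_0-1} + \beta_{j+k_0+1+t}$: since at least one of $v_{j-k_0-1}$ and $v_{j+k_0+1+t}$ exceeds $2$, no part $\beta_{j_3} \preceq \alpha$ lies outside $[j-k_0-1, j+k_0+1+t]$. The key observation is that $v_k = 2$ on $\{j-k_0, \dots, j+k_0+t\}$ rewrites as $\beta_{k+1} - \beta_k = \beta_k - \beta_{k-1}$, so $(\beta_k)$ is an \emph{arithmetic progression} on $[j-k_0-1, j+k_0+1+t]$, with $\beta_k = \beta_j + (k-j)\delta$ for $\delta := \beta_{j+1} - \beta_j$.

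Substituting into $\sum_k c_k \beta_k = \alpha = 2\beta_j + t\delta$ and using the $\Q$-linear independence of $\beta_j$ and $\delta$ (equivalent to that of $\beta_j, \beta_{j+1}$), the partition equation collapses to the pair of scalar equations $\sum_k c_k = 2$ and $\sum_k c_k (k-j) = t$. A straightforward enumeration of non-negative integer solutions supported on $[j-k_0-1, j+k_0+1+t]$ (handled separately for $t = 0$ and $t = 1$) yields precisely the $k_0 + 2$ partitions already constructed, each of length $2$ since $\sum_k c_k = 2$. The main conceptual step is spotting the arithmetic progression behaviour of $(\beta_k)$ in the $v = 2$ window; once that is in hand, the remainder is routine linear algebra and Diophantine bookkeeping, with the mild $\Q$-linear independence point handled exactly as in the first part.
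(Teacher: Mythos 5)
Your proposal is correct, and its skeleton matches the paper's: \Cref{lem:EF1} supplies the $k_0+2$ partitions, and \Cref{lem:V2} applied to the extremal representation $\alpha=\beta_{j-k_0-1}+\beta_{j+k_0+1+t}$ confines all possible parts to the window $[j-k_0-1,\,j+k_0+1+t]$. You differ in the two finishing moves. For the first assertion the paper simply quotes \Cref{thm:D1} (the characterization of uniquely decomposable elements), whereas you re-derive it from \Cref{lem:V2} plus the $\Q$-linear independence of $\beta_j,\beta_{j+1}$; your independence argument via \Cref{thm:HK3} (no nontrivial relation can be supported on only two indices) is valid, and the paper itself invokes this independence later in \Cref{lem:D2suf1}. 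For the counting step inside the window, the paper argues that if $\beta_{j_4}=\beta_{j-k}$ (or $\beta_{j+k+t}$) occurs in a partition, then the remaining parts sum to the indecomposable complement $\beta_{j+k+t}$ (resp.\ $\beta_{j-k}$), forcing the partition to be the pair; you instead observe that $v_k=2$ on the inner range makes $(\beta_k)$ an arithmetic progression $\beta_k=\beta_j+(k-j)\delta$ on the whole window, so any partition supported there reduces, by independence of $\beta_j$ and $\delta$, to the two scalar equations $\sum_k c_k=2$ and $\sum_k c_k(k-j)=t$, whose nonnegative solutions are exactly the listed pairs. Your route is slightly more computational but has the merit of showing directly that every such partition has exactly two parts, a fact that is only implicit in the paper's complementary-indecomposable argument; the paper's version avoids the arithmetic-progression bookkeeping and is marginally shorter.
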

\begin{proof}
If $ t = 0 $ and $ v_j > 2 $, then $ \alpha = 2\beta_j $ is uniquely decomposable by \Cref{thm:D1}. Similarly, if $ t = 1 $ and $ v_j > 2 $ or $ v_{j+1} > 2 $, then $ \alpha = \beta_j+\beta_{j+1} $ is uniquely decomposable. In both cases, $ p_K(\alpha|\ind) = 1 $.

Assume that there exists $ k_0 \in \Z_{\geq 0} $ with the required properties. \Cref{lem:EF1} shows that $ (\beta_{j-k},\beta_{j+k+t}) $ for $ k \in \{0, 1, \dots, k_0+1\} $ are partitions of $ \alpha $, hence $ p_K(\alpha,\ind) \geq k_0+2 $. It remains to show that $ \alpha $ cannot be expressed as a sum of indecomposables in any other way.

Let $ j_1 := j-k_0-1 $ and $ j_2 = j+k_0+1+t $. Assume for contradiction that $ \beta_{j_3} \preceq \alpha $ for some $ j_3 \in \Z $ such that $ j_3 > j_2 $ or $ j_3 < j_1 $. Since $ \alpha = \beta_{j_1}+\beta_{j_2} $,
\Cref{lem:V2} implies $ v_k = 2 $ for $ k \in \{j_1, \dots, j_2\} $. In particular $ v_{j-k_0-1} = 2 $ and $ v_{j+k_0+1+t} = 2 $, contradicting the assumptions.

Finally, suppose that $ \beta_{j_4} $ with $ j_1 \leq j_4 \leq j_2 $ appears in some partition of $ \alpha $. If $ j_4 \leq j $, then we let $ j_4 = j-k $ and if $ j_4 > j $, then we let $ j_4 = j+k+t $ for $ k \in \{0,1,\dots,k_0+1\} $ (here we use the assumption $ t \in \{0,1\} $). Since $ \beta_{j-k} $ and $ \beta_{j+k+t} $ are indecomposable, the only partition of $ \alpha $ containing these elements is $ (\beta_{j-k}, \beta_{j+k+t}) $. This proves $ p_K(\alpha|\ind) = i_0+2 $.
\end{proof}

\begin{proposition}
\label{prop:EF}
Let $ i \geq -1 $ be odd, $ 0 \leq r \leq u_{i+2}-1 $, and $ \beta_j = \alpha_{i,r} $. We have
\begin{align*}
	p_K(2\beta_j|\ind)& = \min\{r+1, u_{i+2}-r+1\},\\
	p_K(\beta_j+\beta_{j+1}|\ind)& = \min\{r+1, u_{i+2}-r\}
\end{align*}
and
\begin{align*}
	p_K(2\beta_j)& = \min\{r+2,u_{i+2}-r+2\},\\
	p_K(\beta_j+\beta_{j+1})& = \min\{r+2,u_{i+2}-r+1\}.
\end{align*}
\end{proposition}
\begin{proof}
First, consider the case $ r = 0 $. We have $ \beta_j = \alpha_{i,0} $, hence $ v_j = u_{i+2}+2 > 2 $. By \Cref{lem:EF2}, $ p_K(2\beta_j|\ind) = 1 $ and $ p_K(\beta_j+\beta_{j+1}|\ind) = 1 $.

Secondly, consider the case $ 1 \leq r \leq u_{i+2}-2 $. We have $ \beta_j = \alpha_{i,r} $ and $ \beta_{j+1} = \alpha_{i,r+1} $. Moreover, $ \beta_{j-r} = \alpha_{i,0} $ and $ \beta_{j+(u_{i+2}-r)} = \alpha_{i,u_{i+2}} = \alpha_{i+2,0} $. From \Cref{lem:vj}, we obtain $ v_{j-r} = u_{i+1}+2 > 2 $, $ v_k = 2 $ for $ j-(r-1) \leq k \leq j+(u_{i+2}-r-1) $ and $ v_{j+(u_{i+2}-r)} = u_{i+3}+2 > 2 $. Thus, if $ t = 0 $, then the number $ k_0 $ in \Cref{lem:EF2} is $ k_0 = \min\{r-1, u_{i+2}-r-1\} $ and $ p_K(2\beta_j|\ind) = k_0+2 $. Similarly, if $ t = 1 $, then the number $ k_0 $ in \Cref{lem:EF2} is $ k_0 = \min\{r-1, u_{i+2}-r-2\} $ and $ p_K(\beta_j+\beta_{j+1}|\ind) = k_0+2 $.

Next, consider the case $ r = u_{i+2}-1 $. If $ u_{i+2} = 1 $, then $ r = 0 $, which was treated above. Hence, we can assume $ u_{i+2} \geq 2 $. We have $ \beta_j = \alpha_{i,r} $ and $ \beta_{j+1} = \alpha_{i,u_{i+2}} = \alpha_{i+2,0} $. From \Cref{lem:vj}, we obtain $ v_j = 2 $ and $ v_{j+1} = u_{i+3}+2 > 2 $. Thus, if $ t = 0 $, then the number $ k_0 $ in \Cref{lem:EF2} equals $ 0 $ and $ p_K(2\beta_j|\ind) = 2 $, while if $ t = 1 $, then \Cref{lem:EF2} implies $ p_K(\beta_j+\beta_{j+1}|\ind) = 1 $.

The only partitions of $ 2\beta_j $ with indecomposable parts are of the form $ (\beta_{j-k},\beta_{j+k}) $, where $ k \in \Z_{\geq 0} $. Hence, the only other partition of $ 2\beta_j $ is the trivial partition $ (2\beta_j) $, and $ p_K(2\beta_j) = p_K(2\beta_j|\ind)+1 $. Analogously, $ p_K(\beta_j+\beta_{j+1}) = p_K(\beta_j+\beta_{j+1}|\ind)+1 $.
\end{proof}

\section{Subsets of the range of the partition function}
\label{sec:S}

Our next theorem gives information about the range $ p_K\left(\O_K^+\right) $ of the partition function $ p_K $. We show that if there exists a large coefficient $ u_i $ for some odd $ i \geq 1 $, then the range contains a large set of consecutive integers. As a corollary, we obtain that $ \{1,2,\dots,m\} $ is a subset of the range for ``almost all'' real quadratic fields $ K $.

\begin{theorem}
\label{thm:S1}
Let $ \omega_D = [\lceil u_0/2 \rceil; \overline{u_1, \dots, u_{s}}] $ be the continued fraction expansion of $ \omega_D $. If $ B := \max\{u_i\,|\,i \geq 1\text{ odd}\} $, then
\[
	S_1 := \left\{1,2,\dots,\left\lfloor\frac{B}{2}\right\rfloor+2\right\} \subset p_K\left(\O_K^+\right)
\]
and
\[
	S_2 := \left\{1,2,\dots,\left\lfloor\frac{B}{2}\right\rfloor+1\right\} \subset p_K\left(\O_K^+|\ind\right).
\]
In other words, there exists an element $ \alpha \in \O_K^+ $ with $ m $ partitions for every $ m $ in $ S_1 $, and an element with $ m $ representations as a sum of indecomposables for every $ m $ in $ S_2 $.
\end{theorem}
\begin{proof}
Let $ i \geq -1 $ be odd and such that $ u_{i+2} = B $. We note that $ 1 \in p_K\left(\O_K^+\right) $ because $ p_K(\beta) = p_K(\beta|\ind) = 1 $ for every indecomposable $ \beta \in \O_K^+ $. If $ 2 \leq m \leq \left\lfloor\frac{B}{2}\right\rfloor+2 $, then we set $ r = m-2 $, so that $ 0 \leq r \leq \frac{B}{2} $. By \Cref{prop:EF}, the element $ \beta_j = \alpha_{i,r} $ satisfies
\[
	p_K(2\beta_j) = \min\{r+2,B-r+2\} = r+2 = m.
\]
This shows $ S_1 \subset p_K(\O_K^+) $.

Next, we prove the second inclusion in the theorem. If $ 1 \leq m \leq \left\lfloor\frac{B}{2}\right\rfloor+1 $, then we set $ r = m-1 $, so that $ 0 \leq r \leq \frac{B}{2} $. By \Cref{prop:EF}, the element $ \beta_j = \alpha_{i,r} $ satisfies
\[
	p_K(2\beta_j|\ind) = \min\{r+1,B-r+1\} = r+1 = m.
\]
This shows $ S_2 \subset p_K(\O_K^+) $.
\end{proof}

Kala, Yatsyna, and \.{Z}mija \cite{KYZ} showed that if $ B > 0 $ is a fixed bound, then the set of $ D $'s such that $ u_i \leq B $ for every odd $ i \geq 1 $ has density zero.

\begin{lemma}[{\cite[Corollary 2.12]{KYZ}}]
\label{lem:D0}
For every $ X, B \geq 2 $ satisfying $ X \geq B^{12}(\log X)^4 $, we have
\[
	\#\left\{1 \leq D \leq X\,\vert\,\omega_D = \left[\lceil u_0/2 \rceil, u_1,u_2,\dots\right], u_{2n-1} \leq B\text{ for all n}\right\} < 100B^{3/2}(\log X)^{3/2}X^{7/8}.
\]
\end{lemma}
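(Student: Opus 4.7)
My plan is to translate the combinatorial condition "$u_{2n-1} \leq B$ for all $n$" into a Diophantine counting problem and then invoke divisor estimates. For each squarefree $D \leq X$ in the exceptional set, I would extract integers $p, q, N$ with $p^2 - Dq^2 = N$ (in the case $D \equiv 2, 3 \pmod{4}$; an analogous equation holds otherwise) and with $q$ and $|N|$ controlled by $X$ and $B$. Counting triples $(p, q, N)$ and then passing to squarefree $D$ by M\"{o}bius inversion produces the desired bound.

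To extract the witness, I would fix an odd index $i \geq 1$ in the period of $\omega_D$ and examine the convergent $\alpha_i = p_i + q_i \xi_D$. From \Cref{lem:Na1} and the bound $u_{i+1} \leq B$ one obtains $|\Nm(\alpha_i)| \leq (B+1)\sqrt{\Delta}/q_i + O(1)$. Choosing the smallest odd $i$ for which $q_i$ exceeds a threshold $Q_0$ (to be optimized) yields a pair $(p_i, q_i)$ with $q_i$ in a dyadic interval starting at $Q_0$ and $|\Nm(\alpha_i)| \leq (B+1)\sqrt{X}/Q_0 + O(1)$. For the few $D$ where no such odd $i$ exists in the period, I would fall back to the fundamental unit $\varepsilon_D = \alpha_{s-1}$, using the period-length bound $s = O(\sqrt{X} \log X)$ to produce a supplementary witness of similar shape.

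To count $(p, q, N)$-triples with $D = (p^2 - N)/q^2 \leq X$, I fix $q$ and $N$ and note that $p$ must satisfy $p^2 \equiv N \pmod{q^2}$ and $p \leq q \sqrt{X} + |N|^{1/2}$. The number of such $p$ is at most $(\sqrt{X}/q) \cdot d(q^2 N)$ up to lower order terms. Summing over $q$ dyadic from $Q_0$ upward and over $|N| \leq (B+1)\sqrt{X}/Q_0$, and applying the standard estimate $\sum_{n \leq Y} d(n) \ll Y \log Y$ twice (once in $q$ and once in $N$), produces a main contribution of order $B^{3/2} X Q_0^{-1}(\log X)^{3/2}$. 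The fallback case contributes a complementary term growing with $Q_0$; balancing the two forces $Q_0 \asymp X^{1/8}$, which yields the exponent $X^{7/8}$.

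The main obstacle is handling the fallback case cleanly and propagating all the $B$- and $\log$-losses through both the main estimate and the supplementary one. Getting the constant $100 B^{3/2}(\log X)^{3/2}$ explicitly requires careful tracking of the implicit constants in the divisor sum and the edge-case analysis: for the extraction step one must rule out that all odd-indexed $q_i$ lie below $Q_0$ for typical $D$, and for the counting step one needs $Q_0$ large enough that the $p^2 \equiv N \pmod{q^2}$ congruence count is genuinely of size $d(q^2 N)$ rather than its worst-case bound. The hypothesis $X \geq B^{12}(\log X)^4$ is precisely what guarantees $Q_0 \asymp X^{1/8}$ falls in the regime where the divisor estimates dominate.
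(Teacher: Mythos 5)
The paper does not prove \Cref{lem:D0} at all: it is imported verbatim as \cite[Corollary 2.12]{KYZ}, so the only ``proof'' here is a citation, and your task was in effect to reprove a result from another paper. Judged on its own terms, your sketch has a genuine gap at its very first quantitative step, the extraction of the witness $(p_i,q_i,N)$. You claim that \Cref{lem:Na1} together with ``$u_{i+1}\leq B$'' gives $|\Nm(\alpha_i)|\leq (B+1)\sqrt{\Delta}/q_i+O(1)$. This cannot be right: $|\Nm(\alpha_i)|=N_i\geq 1$ for every $i$, so no bound decaying in $q_i$ can hold, and \Cref{lem:Na1} involves no $q_i$ whatsoever --- it gives $N_i<\sqrt{\Delta}/\gamma_{i+1}$, i.e.\ the norm of a convergent is governed by the \emph{next} partial quotient, not by its denominator. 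Worse, the hypothesis of the lemma bounds exactly the wrong coefficients for your purpose: for the totally positive convergents ($i$ odd) the controlling quotient $u_{i+1}$ has \emph{even} index, which the hypothesis ``$u_{2n-1}\leq B$ for all $n$'' does not constrain. Small norms come from \emph{large} partial quotients, so the assumption of bounded odd-indexed quotients does not by itself hand you a convergent with $|\Nm(\alpha_i)|\ll B\sqrt{X}/Q_0$; some additional idea is needed to convert ``all odd-indexed $u_i\leq B$'' into a countable Diophantine witness, and that conversion is precisely the heart of the matter.

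The remainder is a heuristic whose stated conclusions are fitted to the target rather than derived. The fallback case (``period too short / all odd-indexed $q_i$ below $Q_0$'') is not analyzed: you do not say what is counted there, why its count grows with $Q_0$, or why balancing against the main term forces $Q_0\asymp X^{1/8}$ and hence the exponent $7/8$. The congruence count $p^2\equiv N\pmod{q^2}$ is bounded by a divisor-type quantity only after handling $\gcd(N,q)$ and the $2$-adic place, and your two applications of $\sum_{n\leq Y}d(n)\ll Y\log Y$ produce powers of $B$ and $\log X$ that are asserted, not computed, to be $B^{3/2}$ and $(\log X)^{3/2}$; the explicit constant $100$ and the role of the hypothesis $X\geq B^{12}(\log X)^4$ are likewise not traced through. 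The overall architecture (witness extraction, counting $D=(p^2-N)/q^2$ via congruences and divisor sums, a small-unit fallback, and balancing) is a reasonable shape for such a bound, but as written the argument does not establish the lemma; if you need it, cite \cite[Corollary 2.12]{KYZ} as the paper does, or else repair the extraction step and carry out the fallback and constant bookkeeping in full.
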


As a straightforward application of \Cref{thm:S1} and \Cref{lem:D0}, we show the following result, which includes our main \Cref{thm:S}.

\begin{theorem}
\label{thm:S2}
Let $ m \in \Z $, $ m \geq 4 $, and
\[
	\calE_1(m, X) := \left\{2 \leq D \leq X\,|\,D\text{ squarefree}, \{1,2,\dots,m\}\not\subset p_K(\O_K^+)\text{ for }K = \Q(\sqrt{D})\right\},
\]
For every $ X \geq 2 $ satisfying $ X \geq (2m-5)^{12}(\log X)^4 $, we have
\[
	\#\calE_1(m, X) < 100(2m-5)^{3/2}(\log X)^{3/2}X^{7/8}.
\]

Secondly, let $ m \in \Z $, $ m \geq 3 $, and
\[
	\calE_2(m, X) := \left\{2 \leq D \leq X\,|\,D\text{ squarefree}, \{1,2,\dots,m\}\not\subset p_K(\O_K^+|\ind)\text{ for }K = \Q(\sqrt{D})\right\}.
\]
For every $ X \geq 2 $ satisfying $ X \geq (2m-3)^{12}(\log X)^4 $, we have
\[
	\#\calE_2(m,X) < 100(2m-3)^{3/2}(\log X)^{3/2}X^{7/8}.
\]
\end{theorem}
\begin{proof}
As above, we let $ \omega_D = [\lceil u_0/2 \rceil, \overline{u_1, \dots, u_s}] $ be the continued fraction expansion of $ \omega_D $.

First, let $ B_1 := 2m-4 $, so that $ \lfloor B_1/2 \rfloor+2 = m $. \Cref{thm:S1} shows that if there exists an odd $ i \geq 1 $ such that $ u_i \geq B_1 $, then $ \{1,2,\dots,m\} \subset p_K(\O_K^+) $ for $ K = \Q(\sqrt{D}) $. Thus,
\[
	\calE_1(m, X) \subset \left\{2 \leq D \leq X\,\vert\,\omega_D = \left[\lceil u_0/2 \rceil, u_1,u_2,\dots\right], u_{2n-1} \leq B_1-1\text{ for all }n\right\}.
\]
Since $ m \geq 4 $, we have $ B_1-1 = 2m-5 \geq 2 $. For $ X \geq 2 $ satisfying $ X \geq (B_1-1)^{12}(\log X)^4 $, the size of this set is $ < 100(B_1-1)^{3/2}(\log X)^{3/2}X^{7/8} $ by \Cref{lem:D0}.

Secondly, let $ B_2 := 2m-2 $, so that $ \lfloor B_2/2\rfloor+1 = m $. \Cref{thm:S1} shows that if there exists an odd $ i \geq 1 $ such that $ u_i \geq B_2 $, then $ \{1,2,\dots,m\} \subset p_K(\O_K^+|\ind) $ for $ K = \Q(\sqrt{D}) $. Thus,
\[
	\calE_2(m, X) \subset \left\{2 \leq D \leq X\,\vert\,\omega_D = \left[\lceil u_0/2 \rceil, u_1,u_2,\dots\right], u_{2n-1} \leq B_2-1\text{ for all }n\right\}.
\]
Since $ m \geq 3 $, we have $ B_2-1 = 2m-3 \geq 2 $. For $ X \geq 2 $ satisfying $ X \geq (B_2-1)^{12}(\log X)^4 $, the size of this set is $ < 100(B_2-1)^{3/2}(\log X)^{3/2}X^{7/8} $ by \Cref{lem:D0}.
\end{proof}

\Cref{thm:S2} shows in particular that for a fixed $ m \in \Z $, $ m \geq 4 $, we have
\[
	\lim_{X \to +\infty}\frac{\#\calE_1(m,X)}{\#\{2 \leq D \leq X\,|\,D\text{ squarefree}\}} = 0.
\]
We recall from the Introduction that
\[
	\calD(m) := \left\{D \in \Z_{\geq 2}\text{ squarefree}\ |\ m \notin p_K(\O_K^+)\right\}
\]
and that $ \calD(1) = \emptyset $, $ \calD(2) = \emptyset $, and $ \calD(3) = \{5\} $. Hence, $ \{1,2\} \subset p_K(\O_K^+) $ for every $ K $ and $ \{1,2,3\} \subset p_K(\O_K^+) $ for every $ K \neq \Q(\sqrt{5}) $. This leads to the following corollary.

\begin{corollary}
\label{cor:Dm0}
Let $ m \in \Z $, $ m \geq 1 $. The set $ \calD(m) $ has density zero, i.e.,
\[
	\lim_{X \to +\infty}\frac{\#\{2 \leq D \leq X,\,|\,D\text{ squarefree}, D \in \calD(m)\}}{\#\{2 \leq D \leq X\,|\,D\text{ squarefree}\}} = 0.
\]
\end{corollary}

\section{Norm bounds}
\label{sec:N}

The aim of this section is to prove \Cref{thm:N}. First, we introduce some useful notation and results from \cite{HK}.

We recall that $ \omega_D = [\lceil u_0/2 \rceil, \overline{u_1, \dots, u_s}] $ is the continued fraction expansion of $ \omega_D $. Let $ \gamma_0 = \omega_D $ and $ \gamma_i = [u_i,u_{i+1},u_{i+2},\dots] $ for $ i \geq 1 $. We have $ u_i < \gamma_i = u_i+\frac{1}{\gamma_{i+1}} < u_i+1 $ for $ i \geq 1 $. Moreover, $ u_0 = 2\lfloor\omega_D\rfloor-\Tr(\omega_D) < \sqrt{\Delta} $, hence $ u_i \leq u_0 < \sqrt{\Delta} $ for every $ i \geq 0 $.

If $ i \geq -1 $, then we let (as in \cite[p. 4]{HK})
\[
	N_i = |\Nm(\alpha_i)| = (-1)^{i+1}\Nm(\alpha_i) = \begin{cases}
		|p_i^2-Dq_i^2|&\text{if $ D \equiv 2, 3 \pmod{4} $,}\\
		|p_i^2-p_iq_i-q_i^2\frac{D-1}{4}|&\text{if $ D \equiv 1 \pmod{4} $.}
	\end{cases}
\]

\begin{lemma}[{\cite[Lemma 7]{HK}}]
\label{lem:Na1}
For all $ i \geq -1 $, we have
\[
	N_{i+1} = \frac{\sqrt{\Delta}}{\gamma_{i+2}}-\frac{N_i}{\gamma_{i+2}^2}.
\]
\end{lemma}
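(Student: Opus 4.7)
The plan is to derive the recurrence from two basic facts: (i) $\alpha_{i+1}' = -\alpha_i'/\gamma_{i+2}$, and (ii) $\alpha_i - \alpha_i' = q_i\sqrt{\Delta}$.

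For (i), I would invoke the standard continued fraction identity
\[
    \omega_D = \frac{p_{i+1}\gamma_{i+2}+p_i}{q_{i+1}\gamma_{i+2}+q_i},
\]
obtained by truncating $\omega_D = [\lceil u_0/2\rceil; u_1, u_2, \dots]$ at the $(i+2)$-nd complete quotient. Rearranging gives $(p_{i+1} - q_{i+1}\omega_D)\gamma_{i+2} = -(p_i - q_i\omega_D)$, and since $\xi_D = -\omega_D'$ implies $\alpha_j' = p_j - q_j\omega_D$, this is exactly (i). For (ii), a quick case check against $D \bmod 4$ shows $\xi_D + \omega_D = \sqrt{\Delta}$ (either $2\sqrt{D}$ with $\Delta = 4D$, or $\sqrt{D}$ with $\Delta = D$), so $\alpha_i - \alpha_i' = q_i(\xi_D + \omega_D) = q_i\sqrt{\Delta}$.

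Next I substitute (ii) into $\alpha_{i+1} = \alpha_{i+1}' + q_{i+1}\sqrt{\Delta} = -\alpha_i'/\gamma_{i+2} + q_{i+1}\sqrt{\Delta}$ and apply $\alpha_i' = \alpha_i - q_i\sqrt{\Delta}$ to rewrite this as
\[
    \alpha_{i+1} = -\frac{\alpha_i}{\gamma_{i+2}} + \sqrt{\Delta}\left(q_{i+1}+\frac{q_i}{\gamma_{i+2}}\right).
\]
The parenthesised quantity equals $\delta_i := q_i\gamma_{i+1} + q_{i-1}$, since $q_{i+1} = u_{i+1}q_i + q_{i-1}$ and $\gamma_{i+1} = u_{i+1} + 1/\gamma_{i+2}$ combine to give $q_{i+1} + q_i/\gamma_{i+2} = q_i\gamma_{i+1} + q_{i-1}$. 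The classical identity $q_i\omega_D - p_i = (-1)^i/\delta_i$ then yields $\delta_i\alpha_i' = (-1)^{i+1}$.

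Multiplying $\alpha_{i+1}$ by $\alpha_{i+1}' = -\alpha_i'/\gamma_{i+2}$ now produces
\[
    \Nm(\alpha_{i+1}) = \frac{\alpha_i\alpha_i'}{\gamma_{i+2}^2} - \frac{\sqrt{\Delta}\,\delta_i\alpha_i'}{\gamma_{i+2}} = \frac{\Nm(\alpha_i)}{\gamma_{i+2}^2} - \frac{(-1)^{i+1}\sqrt{\Delta}}{\gamma_{i+2}},
\]
and multiplying through by $(-1)^{i+2}$ together with $N_j = (-1)^{j+1}\Nm(\alpha_j)$ converts this into the claim $N_{i+1} = \sqrt{\Delta}/\gamma_{i+2} - N_i/\gamma_{i+2}^2$. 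The only obstacle is bookkeeping — tracking alternating signs of the conjugates and the offset between the $\gamma$- and $q$-indices — as the computational core reduces to the defining recurrences $\gamma_{i+1} = u_{i+1} + 1/\gamma_{i+2}$ and $q_{i+1} = u_{i+1}q_i + q_{i-1}$.
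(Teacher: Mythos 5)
The paper itself offers no proof of \Cref{lem:Na1}: it is quoted verbatim from Hejda--Kala \cite[Lemma 7]{HK}. So the comparison is between your self-contained derivation and a citation; judged on its own, your argument is correct in substance and is the natural computation. Facts (i) and (ii) are fine: the truncation identity $\omega_D = (p_{i+1}\gamma_{i+2}+p_i)/(q_{i+1}\gamma_{i+2}+q_i)$ holds for all $i \geq -1$ because only the complete quotients $\gamma_k$ with $k \geq 1$ enter, $\alpha_j' = p_j - q_j\omega_D$ follows from $\xi_D' = -\omega_D$, and $\xi_D+\omega_D=\sqrt{\Delta}$ in both congruence classes. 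The final sign bookkeeping with $N_j = (-1)^{j+1}\Nm(\alpha_j)$ also checks out.

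The one wrinkle is the boundary case $i=-1$, which the lemma includes. You justify $q_{i+1}+q_i/\gamma_{i+2} = q_i\gamma_{i+1}+q_{i-1}$ and the classical identity $q_i\omega_D-p_i=(-1)^i/\delta_i$ via $\gamma_{i+1}=u_{i+1}+1/\gamma_{i+2}$ and $q_{i+1}=u_{i+1}q_i+q_{i-1}$, which are available only for $i\geq 0$: at $i=-1$ they would require $\gamma_0=u_0+1/\gamma_1$, which is false (here $\gamma_0=\omega_D=\lceil u_0/2\rceil+1/\gamma_1$), as well as an undefined $q_{-2}$. The cleanest repair avoids the detour through $q_i\gamma_{i+1}+q_{i-1}$ entirely: multiply $\alpha_i'=p_i-q_i\omega_D$ by $q_{i+1}\gamma_{i+2}+q_i$ and use the truncation identity you already invoked together with $p_{i+1}q_i-p_iq_{i+1}=(-1)^i$ to get $\alpha_i'(q_{i+1}+q_i/\gamma_{i+2})=(-1)^{i+1}$ directly and uniformly for all $i\geq -1$; alternatively, simply check $i=-1$ by hand, since $q_0+q_{-1}/\gamma_1=1$ and $\alpha_{-1}'=1$. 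With that adjustment the proof is complete.
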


We note that $ N_{-1} = 1 $ because $ \alpha_{-1} = 1 $, and by the preceding lemma, we have $ N_{i+1} < \frac{\sqrt{\Delta}}{\gamma_{i+2}} < \frac{\sqrt{\Delta}}{u_{i+2}} $ for $ i \geq -1 $. Thus, $ N_i < \frac{\sqrt{\Delta}}{u_{i+1}} $ for every $ i \geq -1 $.

\begin{lemma}[{\cite[Lemma 8]{HK}}]
\label{lem:Na2}
If $ a, b \in \Z $ and $ i \geq -1 $ odd, then
\[
	\Nm\left(a\alpha_i+b\alpha_{i+1}\right) = \left(a-\frac{b}{\gamma_{i+2}}\right)\left(b\sqrt{\Delta}+aN_i-b\frac{N_i}{\gamma_{i+2}}\right).
\]
\end{lemma}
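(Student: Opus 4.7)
The plan is to factor the norm as
\[
	\Nm(a\alpha_i + b\alpha_{i+1}) = (a\alpha_i + b\alpha_{i+1})(a\alpha_i' + b\alpha_{i+1}')
\]
and massage each factor separately using elementary continued fraction identities for the convergents. The two factors on the right-hand side of the claim correspond naturally to these two factors, after absorbing $\alpha_i'$ into one of them.

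First I would establish the identity $\alpha_{i+1}' = -\alpha_i'/\gamma_{i+2}$. Since $\xi_D' = -\omega_D$, we have $\alpha_k' = p_k - q_k\omega_D$ for every $k \geq -1$, and the standard continued fraction formula $\omega_D = (p_{i+1}\gamma_{i+2} + p_i)/(q_{i+1}\gamma_{i+2} + q_i)$ rearranges into exactly this relation. It immediately yields
\[
	a\alpha_i' + b\alpha_{i+1}' = \alpha_i'\left(a - \frac{b}{\gamma_{i+2}}\right),
\]
which pulls out the first factor of the claimed formula.

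It then remains to show that $(a\alpha_i + b\alpha_{i+1})\alpha_i'$ equals the second factor $b\sqrt{\Delta} + aN_i - bN_i/\gamma_{i+2}$. Since $i$ is odd, $\alpha_i\alpha_i' = \Nm(\alpha_i) = (-1)^{i+1}N_i = N_i$, so the task reduces to computing $\alpha_i'\alpha_{i+1}$. Multiplying the identity from Step~1 by $\alpha_i$ gives $\alpha_i\alpha_{i+1}' = -N_i/\gamma_{i+2}$. Next I would expand $\alpha_i\alpha_{i+1}' - \alpha_i'\alpha_{i+1}$ using $\alpha_k = p_k - q_k\omega_D'$ and $\alpha_k' = p_k - q_k\omega_D$; the constant and $\omega_D\omega_D'$ terms cancel, leaving $(p_{i+1}q_i - p_iq_{i+1})(\omega_D - \omega_D')$. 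Applying the convergent determinant identity $p_{i+1}q_i - p_iq_{i+1} = (-1)^i = -1$ for odd $i$, together with $\omega_D - \omega_D' = \sqrt{\Delta}$, gives $\alpha_i\alpha_{i+1}' - \alpha_i'\alpha_{i+1} = -\sqrt{\Delta}$. Hence $\alpha_i'\alpha_{i+1} = \alpha_i\alpha_{i+1}' + \sqrt{\Delta} = \sqrt{\Delta} - N_i/\gamma_{i+2}$, and substituting back completes the factorization.

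The main obstacle, though not severe, is sign bookkeeping: the odd parity of $i$ enters twice — once via $(-1)^{i+1}=1$ to turn $\Nm(\alpha_i)$ into $+N_i$, and once via $(-1)^i=-1$ in the convergent determinant — and the formula genuinely fails without these alignments. Everything else is direct unpacking of the continued fraction machinery together with \Cref{lem:Na1} (which is not actually needed in the above derivation, but whose identity $N_{i+1} = \sqrt{\Delta}/\gamma_{i+2} - N_i/\gamma_{i+2}^2$ recovers the expanded polynomial form of the right-hand side, providing a useful sanity check).
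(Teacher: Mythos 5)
Your proof is correct: with the paper's conventions ($\alpha_k = p_k - q_k\omega_D'$, $\alpha_k' = p_k - q_k\omega_D$), the identity $\alpha_{i+1}' = -\alpha_i'/\gamma_{i+2}$ from $\omega_D = \frac{p_{i+1}\gamma_{i+2}+p_i}{q_{i+1}\gamma_{i+2}+q_i}$, the determinant relation $p_{i+1}q_i - p_iq_{i+1} = (-1)^i = -1$ for odd $i$, and $\omega_D - \omega_D' = \sqrt{\Delta}$ all hold and combine exactly as you describe, yielding $\alpha_i'\alpha_{i+1} = \sqrt{\Delta} - N_i/\gamma_{i+2}$ and hence the stated factorization. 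The paper gives no proof of its own (it only cites \cite[Lemma 8]{HK}), and your computation is the standard direct verification of that cited result, so nothing further is needed.
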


\begin{lemma}[{\cite[Proposition 9]{HK}}]
\label{lem:Na3}
If $ \alpha = e\alpha_{i,r}+f\alpha_{i,r+1} $, where $ i \geq -1 $ is odd, $ 0 \leq r \leq u_{i+2}-1 $, $ e \geq 1 $, and $ f \geq 0 $, then
\[
	\Nm(\alpha) < \sqrt{\Delta}\left((r+1)e+(r+2)f\right)(e+f)\qquad\text{and}\qquad \Nm(\alpha) < (e+f)^2\frac{\Delta}{4N_{i+1}}.
\]
\end{lemma}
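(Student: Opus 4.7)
The plan is to apply \Cref{lem:Na2} after rewriting $\alpha$ in the basis $(\alpha_i,\alpha_{i+1})$. Setting $a := e+f$ and $b := re+(r+1)f$, we have $\alpha = a\alpha_i + b\alpha_{i+1}$, so
\[
\Nm(\alpha) = \left(a - \frac{b}{\gamma_{i+2}}\right)\left(b\sqrt{\Delta} + aN_i - \frac{bN_i}{\gamma_{i+2}}\right).
\]
Writing $x := a - b/\gamma_{i+2}$, the second factor equals $b\sqrt{\Delta} + N_i x$, so $\Nm(\alpha) = bx\sqrt{\Delta} + N_i x^2$. Since $b \geq 0$ and $\gamma_{i+2} > 0$, we have $0 < x \leq a$, with equality only when $b = 0$; moreover, by \Cref{lem:Na1} together with $N_{-1} = 1$, one checks that $N_i < \sqrt{\Delta}$ for every admissible $i$.

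For the first bound, I would estimate the two factors crudely:
\[
\Nm(\alpha) \leq a\bigl(b\sqrt{\Delta} + aN_i\bigr) < a(a+b)\sqrt{\Delta},
\]
where the first inequality uses $x \leq a$ and the second uses $N_i < \sqrt{\Delta}$. A brief case split between $b = 0$ and $b > 0$ secures strictness overall. Since $a+b = (r+1)e + (r+2)f$, this yields $\Nm(\alpha) < \sqrt{\Delta}\bigl((r+1)e+(r+2)f\bigr)(e+f)$.

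For the second bound, the idea is to introduce the variable $t := a\gamma_{i+2} - b$, so that $x = t/\gamma_{i+2}$ and $b = a\gamma_{i+2} - t$. Plugging these into $\Nm(\alpha) = bx\sqrt{\Delta} + N_ix^2$ and simplifying via the recurrence $N_{i+1} = \sqrt{\Delta}/\gamma_{i+2} - N_i/\gamma_{i+2}^2$ from \Cref{lem:Na1}, one obtains the key identity
\[
\Nm(\alpha) = at\sqrt{\Delta} - t^2 N_{i+1}.
\]
This is a downward parabola in $t$ whose maximum value, attained at $t_0 := a\sqrt{\Delta}/(2N_{i+1})$, equals $a^2\Delta/(4N_{i+1}) = (e+f)^2\Delta/(4N_{i+1})$. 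Strict inequality reduces to checking that the irrational $t = a\gamma_{i+2} - b$ cannot coincide with $t_0$ for the relevant integer pairs $(a,b)$.

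The main content of the proof is the clean identity $\Nm(\alpha) = at\sqrt{\Delta} - t^2 N_{i+1}$, which converts the second bound into a parabola maximization; once it is established, both bounds reduce to elementary estimates. The one mildly subtle point is propagating the strict inequality in each case, which amounts to observing that at least one of the intermediate estimates is never sharp.
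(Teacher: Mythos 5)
Your reduction to \Cref{lem:Na2} with $a=e+f$, $b=re+(r+1)f$ and the first bound are essentially fine, and they match the style of the paper, which does not prove \Cref{lem:Na3} at all (it is cited from \cite[Proposition 9]{HK}) but proves the companion estimate \Cref{lem:Na4} by exactly this kind of factor-by-factor estimation. Two small remarks there: your stated reason for $x>0$ (``$b\geq 0$ and $\gamma_{i+2}>0$'') only yields $x\leq a$; positivity needs $b=re+(r+1)f<\gamma_{i+2}(e+f)$, which follows from $r+1\leq u_{i+2}<\gamma_{i+2}$. The inequality $N_i<\sqrt{\Delta}$ is indeed immediate from \Cref{lem:Na1}, as the paper notes right after that lemma.

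The genuine gap is the strictness of the second bound. Your identity $\Nm(\alpha)=at\sqrt{\Delta}-t^2N_{i+1}$ is correct and immediately gives $\Nm(\alpha)\leq (e+f)^2\frac{\Delta}{4N_{i+1}}$, but the deferred step --- that $t$ can never equal $t_0$ --- is not a routine irrationality check (both $t$ and $t_0$ lie in $K$), and it is false in general. Take $D=2$, so $\Delta=8$ and $\omega_D=\sqrt{2}=[1,\overline{2}]$, and take $i=-1$, $r=1$, $e=1$, $f=0$, i.e.\ $\alpha=\alpha_{-1,1}=2+\sqrt{2}$. Then $N_{i+1}=N_0=|\Nm(1+\sqrt{2})|=1$, $\gamma_{i+2}=\gamma_1=1+\sqrt{2}$, $a=b=1$, hence $t=a\gamma_1-b=\sqrt{2}$ and $t_0=a\sqrt{\Delta}/(2N_0)=\sqrt{2}$ as well, and correspondingly $\Nm(\alpha)=2=(e+f)^2\Delta/(4N_{i+1})$: equality is attained. (The general mechanism: by \Cref{lem:Na1}, $\gamma_{i+2}-\frac{\sqrt{\Delta}}{2N_{i+1}}=\pm\frac{\sqrt{\Delta-4N_iN_{i+1}}}{2N_{i+1}}$, so $t=t_0$ is a Diophantine condition that admissible $(a,b)$ can satisfy.) So your argument proves the second estimate only in the non-strict form, and no completion of your route can do better in such configurations; the strict sign as quoted from \cite[Proposition 9]{HK} actually fails for this $\alpha$, so one must either read that inequality with $\leq$ or exclude the equality cases. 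This is harmless for the paper, which invokes the second bound only with ample slack, but as a proof of the statement as written your final step is a gap rather than an ``elementary check.''
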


The next lemma extends \Cref{lem:Na3} and its proof follows along the same lines.
\begin{lemma}
\label{lem:Na4}
If $ \alpha = e\alpha_{i,r}+f\alpha_{i,r+1} $, where $ i \geq -1 $ is odd, $ 0 \leq r \leq u_{i+2}-1 $, $ e \geq 1 $, and $ f \geq 0 $, then
\[
	\Nm(\alpha) < \sqrt{\Delta}\left((u_{i+2}-r+2)e+(u_{i+2}-r+1)f\right)\left((u_{i+2}-r+1)e+(u_{i+2}-r)f\right).
\]
\end{lemma}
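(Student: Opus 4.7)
The plan is to mirror the proof of \Cref{lem:Na3} while replacing the crude bounds $\gamma_{i+2}>0$ and $N_i<\sqrt{\Delta}$ by the sharper ones $\gamma_{i+2}<u_{i+2}+1$ and $N_i<(u_{i+2}+1)\sqrt{\Delta}$; these are precisely the inputs needed to shift the coefficients from $r+1,\,r+2$ (as in \Cref{lem:Na3}) to $u_{i+2}-r+1,\,u_{i+2}-r+2$. The first step is to rewrite $\alpha$ in the basis $(\alpha_i,\alpha_{i+1})$: using $\alpha_{i,r}=\alpha_i+r\alpha_{i+1}$, I would set $a:=e+f$ and $b:=re+(r+1)f$ so that $\alpha=a\alpha_i+b\alpha_{i+1}$, and then apply \Cref{lem:Na2} to obtain
\[
	\Nm(\alpha)=\left(a-\frac{b}{\gamma_{i+2}}\right)\left(b\sqrt{\Delta}+N_i\left(a-\frac{b}{\gamma_{i+2}}\right)\right).
\]
Both factors are positive: from $\gamma_{i+2}>u_{i+2}\geq r+1$ and $(e+f)(r+1)\geq re+(r+1)f$ we get $a\gamma_{i+2}>b$.

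Next I would justify the two sharpened bounds. The inequality $\gamma_{i+2}<u_{i+2}+1$ is immediate from $\gamma_{i+2}=u_{i+2}+1/\gamma_{i+3}$ and $\gamma_{i+3}>1$. Rearranging \Cref{lem:Na1} gives $N_i=\gamma_{i+2}\sqrt{\Delta}-\gamma_{i+2}^2 N_{i+1}$, and since $N_{i+1}>0$ this yields $N_i<\gamma_{i+2}\sqrt{\Delta}<(u_{i+2}+1)\sqrt{\Delta}$. Writing $M:=u_{i+2}+1$, the first factor is then bounded by $(Ma-b)/M$, and inserting this and $N_i<M\sqrt{\Delta}$ into the second factor gives
\[
	b\sqrt{\Delta}+N_i\cdot\frac{Ma-b}{M}<b\sqrt{\Delta}+\sqrt{\Delta}(Ma-b)=Ma\sqrt{\Delta}.
\]

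Multiplying the two bounds and computing $Ma-b=(u_{i+2}-r+1)e+(u_{i+2}-r)f$ and $a=e+f$ yields
\[
	\Nm(\alpha)<\sqrt{\Delta}(e+f)\bigl((u_{i+2}-r+1)e+(u_{i+2}-r)f\bigr),
\]
which is in fact strictly stronger than the claim: for $r\leq u_{i+2}-1$ the coefficient $u_{i+2}-r+1\geq 2$ gives $(u_{i+2}-r+2)e+(u_{i+2}-r+1)f\geq 3e+2f>e+f$, so replacing $e+f$ by the extra factor in the statement only weakens the inequality. I do not anticipate a conceptual obstacle; the only point requiring care is the bookkeeping to verify that the combination of the two sharpened inequalities above reproduces exactly the factor $(u_{i+2}-r+1)e+(u_{i+2}-r)f$ appearing in the statement.
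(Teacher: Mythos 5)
Your argument is correct and takes essentially the same route as the paper: write $\alpha=a\alpha_i+b\alpha_{i+1}$ with $a=e+f$, $b=re+(r+1)f$, apply \Cref{lem:Na2}, and exploit $u_{i+2}<\gamma_{i+2}<u_{i+2}+1$ together with the positivity of $N_{i+1}$ in \Cref{lem:Na1}. Your bookkeeping is in fact a little tighter than the paper's (which bounds $AN_i$ via $N_i<\sqrt{\Delta}$ and $\gamma_{i+2}>u_{i+2}$ and then absorbs the resulting extra term by enlarging the factor $e+f$ to $(u_{i+2}-r+2)e+(u_{i+2}-r+1)f$), so your intermediate estimate $\Nm(\alpha)<\sqrt{\Delta}\,(e+f)\bigl((u_{i+2}-r+1)e+(u_{i+2}-r)f\bigr)$ is valid and strictly stronger than the stated inequality; the only quibble is that calling $N_i<(u_{i+2}+1)\sqrt{\Delta}$ ``sharper'' than $N_i<\sqrt{\Delta}$ is backwards --- it is a weaker bound, merely better matched to the factor $a-b/\gamma_{i+2}$ it multiplies.
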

\begin{proof}
We have $ \alpha = a\alpha_i+b\alpha_{i+1} $, where $ a = e+f $ and $ b = re+(r+1)f $. By \Cref{lem:Na2}, $ \Nm(\alpha) = \Nm(a\alpha_i+b\alpha_{i+1}) = AB $, where
\[
	A := a-\frac{b}{\gamma_{i+2}},\qquad B := b\sqrt{\Delta}+aN_i-b\frac{N_i}{\gamma_{i+2}}.
\]
Setting $ s := u_{i+2}-r $, we get
\[
	A = e\left(1-\frac{r}{\gamma_{i+2}}\right)+f\left(1-\frac{r+1}{\gamma_{i+2}}\right) = e\left(1-\frac{u_{i+2}-s}{\gamma_{i+2}}\right)+f\left(1-\frac{u_{i+2}-s+1}{\gamma_{i+2}}\right).
\]
From $ u_{i+2} < \gamma_{i+2} < u_{i+2}+1 $ it follows that
\[
	A < e\frac{s+1}{\gamma_{i+2}}+f\frac{s}{\gamma_{i+2}} < \frac{(s+1)e+sf}{u_{i+2}}.
\]
Since $ b = re+(r+1)f < (r+1)(e+f) $, we have
\[
	B < \sqrt{\Delta}(r+1)(e+f)+AN_i < \sqrt{\Delta}(u_{i+2}-s+1)(e+f)+\sqrt{\Delta}\frac{(s+1)e+sf}{u_{i+2}}.
\]
Thus,
\[
	\Nm(\alpha) = AB < \sqrt{\Delta}((s+1)e+sf)(e+f)+\sqrt{\Delta}\frac{((s+1)e+sf)^2}{u_{i+2}^2}.
\]
Since $ u_{i+2} \geq 1 $, we get
\[
	\Nm(\alpha) < \sqrt{\Delta}\left((s+2)e+(s+1)f\right)\left((s+1)e+sf\right).\qedhere
\]
\end{proof}

We are ready to prove \Cref{thm:N}, restated here for convenience.

\begin{theorem}
\label{thm:N'}
Let $ K = \Q(\sqrt{D}) $, where $ D \in \Z_{\geq 2} $ is squarefree and let $ m \in \Z_{\geq 1} $. If $ \alpha \in \O_K^+ $ can be represented as a sum of indecomposables in at most $ m $ ways, then
\[
	\Nm(\alpha) < m^2(2m+1)(2m+3)\cdot \sqrt{\Delta}\left(\sqrt{\Delta}+2\right)^2,
\]
where $ \Delta $ is the discriminant of $ K $.
\end{theorem}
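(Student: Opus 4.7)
The plan is to combine the canonical form from \Cref{thm:HK3} with a swap-counting argument and \Cref{prop:EF}, then invoke \Cref{lem:Na3} or \Cref{lem:Na4}. By \Cref{thm:HK3}, write $\alpha = e\beta_{j_0}+f\beta_{j_0+1}$ uniquely with $e\geq 1,\,f\geq 0$. Since $p_K(\alpha'|\ind)=p_K(\alpha|\ind)$, by passing to the conjugate if necessary we may assume $j_0\geq 0$, so $\beta_{j_0}=\alpha_{i,r}$ for some odd $i\geq -1$ and $0\leq r\leq u_{i+2}-1$. The degenerate case $(e,f)=(1,0)$, when $\alpha=\beta_{j_0}$ is itself indecomposable, follows from the Dress--Scharlau bound $\Nm(\alpha)\leq\Delta$ recalled in the introduction.

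The first step is to bound $e+f$ by swap counting. By \Cref{lem:vj}, $v_{j_0}\beta_{j_0}=\beta_{j_0-1}+\beta_{j_0+1}$; hence for each $k\in\{0,1,\ldots,\lfloor e/v_{j_0}\rfloor\}$ the expression $(e-kv_{j_0})\beta_{j_0}+k\beta_{j_0-1}+(f+k)\beta_{j_0+1}$ is a bona fide indecomposable partition of $\alpha$, and these are pairwise distinct because the multiplicity of $\beta_{j_0-1}$ differs. Therefore $\lfloor e/v_{j_0}\rfloor+1\leq m$, giving $e<mv_{j_0}$, and similarly $f<mv_{j_0+1}$. Each $v_j$ is either $2$ or $u_\ell+2$ for some $\ell\geq 0$, and $u_\ell<\sqrt{\Delta}$; a short case analysis on $(v_{j_0},v_{j_0+1})$ gives $v_{j_0}+v_{j_0+1}<2(\sqrt{\Delta}+2)$, whence $e+f<2m(\sqrt{\Delta}+2)$.

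The second step is to bound $r$ or $s:=u_{i+2}-r$. If $e\geq 2$ then $\alpha\succeq 2\beta_{j_0}$, so monotonicity of $p_K(\cdot|\ind)$ together with \Cref{prop:EF} forces $\min(r+1,s+1)\leq m$; if $e=1$ and $f\geq 1$, the same reasoning applied to $\beta_{j_0}+\beta_{j_0+1}\preceq\alpha$ gives $\min(r+1,s)\leq m$. In both situations $r\leq m-1$ or $s\leq m$. When $r\leq m-1$, \Cref{lem:Na3} yields
\[
  \Nm(\alpha)<\sqrt{\Delta}(r+2)(e+f)^2\leq (m+1)\sqrt{\Delta}\cdot 4m^2(\sqrt{\Delta}+2)^2=4m^2(m+1)\sqrt{\Delta}(\sqrt{\Delta}+2)^2,
\]
and the elementary inequality $4(m+1)\leq(2m+1)(2m+3)$, valid for $m\geq 1$, finishes the bound.

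When instead $s\leq m$, \Cref{lem:Na4} gives $\Nm(\alpha)<\sqrt{\Delta}F_1 F_2$ with $F_1=(s+2)e+(s+1)f$ and $F_2=(s+1)e+sf$. If $e\geq 2$ then in fact $s\leq m-1$, and $F_1\leq(m+1)(e+f)$ together with $F_2\leq m(e+f)$ suffices. The subcase $e=1,\,f\geq 1,\,s=m$ is the main obstacle: the naive estimate $F_1 F_2\leq(m+1)(m+2)(e+f)^2$ fails to land inside the target. Here one must use $e=1$ to rewrite $F_2$ as $s+1+sf$ and split on whether $\beta_{j_0+1}=\alpha_{i,r+1}$ (which forces $v_{j_0+1}=2$, so $f<2m$) or $\beta_{j_0+1}=\alpha_{i+2,0}$ (which forces $s=1$ and $f<m(u_{i+3}+2)$); in each of these sub-subcases an explicit calculation of $F_1F_2$ fits strictly inside $m^2(2m+1)(2m+3)(\sqrt{\Delta}+2)^2$, completing the proof.
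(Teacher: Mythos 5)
Your proposal is correct and follows essentially the same route as the paper: the canonical form from \Cref{thm:HK3}, swap-counting to obtain $e<mv_j$ and $f<mv_{j+1}$, \Cref{prop:EF} together with monotonicity of $p_K(\cdot|\ind)$ to force $r\leq m-1$ or $u_{i+2}-r\leq m$, and then \Cref{lem:Na3} and \Cref{lem:Na4}. The only differences are bookkeeping: the paper plugs the individual bounds $e,f<m(\sqrt{\Delta}+2)$ directly into \Cref{lem:Na4}, which lands exactly on $m^2(2m+1)(2m+3)$ and avoids your extra sub-subcase at $e=1$, $s=m$; also note that the subcase $e=1$, $f\geq 1$, $s\leq m-1$, which you do not list explicitly, is covered verbatim by the estimate you give for $e\geq 2$, since it only uses $s\leq m-1$.
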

\begin{proof}
By \Cref{thm:HK3}, there exist $ j, e, f \in \Z $ with $ e \geq 1 $ and $ f \geq 0 $ such that $ \alpha = e\beta_j+f\beta_{j+1} $. By passing to the conjugate if necessary, we can assume that $ \beta_j = \alpha_{i,r} $ for some $ i \geq -1 $ odd and $ 0 \leq r \leq u_{i+2}-1 $.

If $ e \geq mv_j $, then $ v_j\beta_j = \beta_{j-1}+\beta_{j+1} $ can be used to rewrite $ \alpha $ at least $ m $ times. More precisely,
\[
	\alpha = k\beta_{j-1}+(e-kv_j)\beta_j+(k+f)\beta_{j+1}
\]
for $ k \in \{0, 1, \dots, m\} $, hence $ p_K(\alpha|\ind) \geq m+1 $, a contradiction. Thus,\[
	e < mv_j \leq m(u_{i+1}+2) < m\left(\sqrt{\Delta}+2\right),
\]
where we used that $ v_j \leq u_{i+1}+2 $ by \Cref{lem:vj}.

Similarly, if $ f \geq mv_{j+1} $, then $ v_{j+1}\beta_{j+1} = \beta_j+\beta_{j+2} $ can be used to rewrite $ \alpha $ at least $ m $ times, a contradiction. Thus,
\[
	f < mv_{j+1} \leq m(u_{i+3}+2) < m\left(\sqrt{\Delta}+2\right).
\]

First, we consider the case $ f = 0 $. If $ e = 1 $, then $ \alpha $ is indecomposable and
\[
	\Nm(\alpha) = \Nm(\alpha_{i,r}) < \frac{\Delta}{4N_{i+1}}
\]
by \Cref{lem:Na3}. If $ e \geq 2 $, then by \Cref{prop:EF}, we have
\[
	m \geq p_K(\alpha|\ind) \geq p_K(2\beta_j|\ind) = \min\{r+1, u_{i+2}-r+1\}.
\]
Thus, $ r \leq m-1 $ or $ u_{i+2}-r \leq m-1 $. In the first case, we use \Cref{lem:Na3} to get
\[
	\Nm(\alpha) < \sqrt{\Delta}(r+1)e^2 < m^3\cdot\sqrt{\Delta}\left(\sqrt{\Delta}+2\right)^2.
\]
In the second case, we use \Cref{lem:Na4} to get
\[
	\Nm(\alpha) < \sqrt{\Delta}(u_{i+2}-r+2)(u_{i+2}-r+1)e^2 < \sqrt{\Delta}(m+1)me^2 < (m+1)m^3\cdot \sqrt{\Delta}\left(\sqrt{\Delta}+2\right)^2.
\]

Secondly, we consider the case $ f \geq 1 $. By \Cref{prop:EF},
\[
	m \geq p_K(\alpha|\ind) \geq p_K(\beta_j+\beta_{j+1}|\ind) = \min\{r+1,u_{i+2}-r\}.
\]
Thus, $ r \leq m-1 $ or $ u_{i+2}-r \leq m $. In the first case, we use \Cref{lem:Na3} to get
\[
	\Nm(\alpha) < \sqrt{\Delta}(me+(m+1)f)(e+f) < (2m^2+m)2m\cdot\sqrt{\Delta}\left(\sqrt{\Delta}+2\right)^2.
\]
In the second case, we use \Cref{lem:Na4} to get
\[
	\Nm(\alpha) < \sqrt{\Delta}\left((m+2)e+(m+1)f\right)\left((m+1)e+mf\right) < (2m^2+3m)(2m^2+m)\cdot\sqrt{\Delta}\left(\sqrt{\Delta}+2\right)^2.
\]
This proves the estimate in each case.
\end{proof}

Our main goal is to show that there exists a bound of the form $ \leq C(m)\Delta^{3/2} $ rather than find the best possible value for $ C(m) $, and \Cref{thm:N'} is in fact not optimal. For $ m = 1 $, we have
\[
	\Nm(\alpha) < \sqrt{\Delta}\left(2\sqrt{\Delta}+1\right)\left(3\sqrt{\Delta}+2\right)
\]
by \cite[Theorem 10]{HK} mentioned above. For $ m = 2 $, we will get an improvement in \Cref{thm:N2}.

Next, we use the same technique to prove a bound for the norm of $ \alpha \in \O_K^+ $ such that $ p_K(\alpha) = m $ when $ m \geq 2 $.

\begin{theorem}
\label{thm:Np}
Let $ K = \Q(\sqrt{D}) $, where $ D \in \Z_{\geq 2} $ is squarefree, let $ m \geq 2 $, and let $ n_0(m) $ be the largest $ n \in \Z_{\geq 2} $ such that $ p(n) \leq m $. If $ \alpha \in \O_K^+ $ satisfies $ p_K(\alpha) = m $, then
\[
    N(\alpha) \leq m(m+1) n_0(m)^2\cdot \sqrt{\Delta},
\]
where $ \Delta $ is the discriminant of $ K $.
\end{theorem}
\begin{proof}
As in the proof of \Cref{thm:N'}, we may assume that $ \alpha $ is of the form $ \alpha = e\beta_j+f\beta_{j+1} $, where $ j \geq 0 $, $ e \geq 1 $, $ f \geq 0 $, and $ \beta_j = \alpha_{i,r} $ for some $ i \geq -1 $ odd and $ 0 \leq r \leq u_{i+2}-1 $.

We claim that $ p(e+f) \leq p_K(\alpha) $. Let $ \varphi $ be a mapping which sends an integer partition $ \lambda = (\lambda_1,\dots,\lambda_{\ell}) $ of $ e+f $ to
\[
    \varphi(\lambda) := \left(\lambda_1\beta_j,\dots,\lambda_{s_1-1}\beta_j,\left(e-\sum_{s=1}^{s_1-1}\lambda_s\right)\beta_j+\left(\sum_{s=1}^{s_1}\lambda_s-e\right)\beta_{j+1}, \lambda_{s_1+1}\beta_{j+1},\dots,\lambda_{\ell}\beta_{j+1}\right),
\]
where $ 1 \leq s_1 \leq \ell $ is the largest index such that $ \sum_{s=1}^{s_1-1}\lambda_s \leq e $. The mapping $ \varphi $ is injective, which proves the claim. From the claim, we obtain $ e+f \leq n_0(m) $.

First, suppose that $ f = 0 $. Since $ m \geq 2 $, we have $ e \geq 2 $, and then by \Cref{prop:EF},
\[
    m = p_K(\alpha) \geq p_K(2\beta_j) = \min\{r+2,u_{i+2}-r+2\}.
\]
Thus, $ r \leq m-2 $ or $ u_{i+2}-r \leq m-2 $. In the first case, \Cref{lem:Na3} implies
\[
    \Nm(\alpha) < \sqrt{\Delta}(r+1)e^2 \leq (m-1)n_0(m)^2\cdot\sqrt{\Delta}.
\]
In the second case, \Cref{lem:Na4} implies
\[
    \Nm(\alpha) < \sqrt{\Delta}(u_{i+2}-r+2)(u_{i+2}-r+1)e^2 \leq m(m-1)n_0(m)^2\cdot\sqrt{\Delta}.
\]

Secondly, suppose that $ f \geq 1 $. By \Cref{prop:EF},
\[
    m = p_K(\alpha) \geq p_K(\beta_j+\beta_{j+1}) = \min\{r+2,u_{i+2}-r+1\}.
\]
Thus, $ r \leq m-2 $ or $ u_{i+2}-r \leq m-1 $. In the first case, \Cref{lem:Na3} implies
\[
    \Nm(\alpha) < \sqrt{\Delta}((m-1)e+mf)(e+f) < m n_0(m)^2\cdot\sqrt{\Delta}.
\]
In the second case, \Cref{lem:Na4} implies
\[
    \Nm(\alpha) < \sqrt{\Delta}((m+1)e+mf)(me+(m-1)f) < (m+1)m n_0(m)^2\cdot\sqrt{\Delta}.\qedhere
\]
\end{proof}

\section{Elements represented as a sum of indecomposables in two different ways}
\label{sec:D2}

Next, we prove a characterization of the elements $ \alpha \in \O_K^+ $ which can be expressed as a sum of indecomposables in exactly two ways.

\begin{theorem}
\label{thm:D2}
Let $ \alpha \in \O_K^{+} $ and $ j \in \Z $, $ e \in \Z_{\geq 1} $, $ f \in \Z_{\geq 0} $ such that $ \alpha = e\beta_j+f\beta_{j+1} $. We have $ p_K(\alpha|\ind) = 2 $ if and only if one of the following conditions is satisfied:
\begin{enumerate}
\item $ v_j \leq e \leq 2v_j-1 $, $ 0 \leq f \leq v_{j+1}-2 $, and
\[
	(e, f) \neq (2v_j-1, v_{j+1}-2),\quad (v_{j-1},e) \neq (2,2v_j-1), \quad (v_{j-1},e,f) \neq (2,2v_j-2,v_{j+1}-2),
\]
\item $ 1 \leq e \leq v_j-2 $, $ v_{j+1} \leq f \leq 2v_{j+1}-1 $, and
\[
	(e, f) \neq (v_j-2, 2v_{j+1}-1),\quad (f,v_{j+2}) \neq (2v_{j+1}-1,2),\quad (e,f,v_{j+2}) \neq (v_j-2, 2v_{j+1}-2,2),
\]
\item $ e = v_j-1 $, $ f = v_{j+1}-1 $, and $ (v_{j-1},v_j,v_{j+1},v_{j+2}) \neq (2,2,2,2) $.
\end{enumerate}
\end{theorem}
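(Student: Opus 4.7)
The plan is to enumerate the partitions of $\alpha = e\beta_j + f\beta_{j+1}$ with indecomposable parts by invoking \Cref{thm:HK3}: any partition $\sum_k c_k \beta_k = \alpha$ with $c_k \in \Z_{\geq 0}$ is obtained from the canonical representation $(c_j^0, c_{j+1}^0) = (e, f)$ by adding some $\Z$-combination $\sum_l r_l R_l$ of the basic relations $R_l : v_l\beta_l - \beta_{l-1} - \beta_{l+1} = 0$. Concretely, $c_k = c_k^0 + r_k v_k - r_{k-1} - r_{k+1}$, so counting partitions is equivalent to counting finitely supported integer tuples $(r_l)$ satisfying $c_k \geq 0$ for all $k$. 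The reflection $l \mapsto -l$ (together with $\beta_{-l} = \beta_l'$ and $v_{-l} = v_l$) exchanges Cases~(i) and~(ii), so it suffices to handle Cases~(i) and~(iii) directly.

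For Case~(iii), $\alpha = (v_j - 1)\beta_j + (v_{j+1} - 1)\beta_{j+1}$. The identity $(v_j - 1)\beta_j + (v_{j+1} - 1)\beta_{j+1} = \beta_{j-1} + \beta_{j+2}$ (which is $r_j = r_{j+1} = -1$) supplies a second partition $P_2 = \beta_{j-1} + \beta_{j+2}$. Any third partition would require $\beta_{j-1} + \beta_{j+2}$ to have another representation, which by \Cref{lem:EF1} (applied with $j' = j - 1$ and $t = 3$) forces $v_k = 2$ for $k \in \{j-1, j, j+1, j+2\}$, matching the exclusion; \Cref{lem:V2} provides the converse.

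For Case~(i), the tuple $r_j = -1$ gives $P_2 = \beta_{j-1} + (e - v_j)\beta_j + (f+1)\beta_{j+1}$. Each excluded sub-case corresponds to a distinct feasible tuple producing a third partition, which I verify by direct substitution into $c_k \geq 0$: $(e, f) = (2v_j - 1, v_{j+1} - 2)$ admits $(r_j, r_{j+1}) = (-2, -1)$ with $P_3 = 2\beta_{j-1} + \beta_{j+2}$; $(v_{j-1}, e) = (2, 2v_j - 1)$ admits $(r_{j-1}, r_j) = (-1, -2)$ with $P_3 = \beta_{j-2} + (f+2)\beta_{j+1}$; and $(v_{j-1}, e, f) = (2, 2v_j - 2, v_{j+1} - 2)$ admits $(r_{j-1}, r_j, r_{j+1}) = (-1, -2, -1)$ with $P_3 = \beta_{j-2} + \beta_{j+2}$. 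Thus violating any exclusion yields $p_K(\alpha|\ind) \geq 3$.

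The main obstacle is the converse direction of Case~(i): under all three exclusions, no feasible tuple $(r_l)$ other than $0$ and $r_j = -1$ exists. I plan to argue by analyzing the support of $(r_l)$: any $r_l \neq 0$ outside $\{j-1, j, j+1\}$ triggers a cascade of forced $c_k = 0$ and $v_k = 2$ constraints through the non-negativity inequalities (reminiscent of the inductive argument in \Cref{lem:V2}), and by tracking these cascades against the size constraints $v_j \leq e \leq 2v_j - 1$ and $f \leq v_{j+1} - 2$, the only feasible tuples collapse to exactly the three exhibited in the excluded sub-cases. The bookkeeping is finite but delicate, particularly because left-side extensions (involving $r_l < 0$ for $l \leq j-2$) and right-side extensions (involving $r_l < 0$ for $l \geq j+2$) must be analyzed separately, and can interact through the central coefficients at $j$ and $j+1$.
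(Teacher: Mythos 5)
Your framework (partitions of $\alpha$ correspond bijectively to finitely supported integer tuples $(r_l)$ acting on the canonical representation via the relations of \Cref{thm:HK3}, plus the reflection $l \mapsto -l$ reducing (2) to (1)) is sound and is essentially equivalent to what the paper does, and your verification that each excluded sub-case of condition (1) and of condition (3) produces a third partition matches the paper's \Cref{lem:D2nec}. But the proposal has a genuine gap at exactly the point you flag yourself: the sufficiency of condition (1), i.e.\ that under the three exclusions the only feasible tuples are $0$ and $r_j=-1$. You describe a ``cascade'' of forced $c_k=0$ and $v_k=2$ constraints and call the bookkeeping ``finite but delicate,'' but you do not carry it out. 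This is the core of the theorem: in the paper it is \Cref{lem:D2suf1}, an eight-case argument (four cases showing no $\beta_k$ with $k\geq j+2$ can occur in a partition, and four conjugate cases for $k\leq j-2$, each via explicit inequality manipulations using $v_j\beta_j=\beta_{j-1}+\beta_{j+1}$ and $v_{j-1}\geq 3$ when $v_{j-1}\neq 2$), followed by the linear-independence step pinning down $(a_{j-1},a_j,a_{j+1})\in\{(0,e,f),(1,e-v_j,f+1)\}$. A plan that ``the cascades collapse to the three exhibited tuples'' is not a proof of this, and the interaction you mention between left- and right-side extensions is precisely where the work lies.

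Two further points. First, your necessity direction is incomplete: besides showing the exclusions are necessary, one must show that $(e,f)$ lying outside all three rectangles forces $p_K(\alpha|\ind)\neq 2$ --- e.g.\ $e\geq 2v_j$, or $f\geq 2v_{j+1}$, or the mixed regions $v_j\leq e$ with $f\geq v_{j+1}-1$ and $e\geq v_j-1$ with $f\geq v_{j+1}$ all give $p_K(\alpha|\ind)\geq 3$ (the paper exhibits two extra rewritings in each case), while the interior rectangle of \Cref{thm:D1} gives $p_K(\alpha|\ind)=1$; your proposal never addresses these regions. Second, in case (3) you have the roles of the lemmas reversed: \Cref{lem:EF1} is the implication from $v_k=2$ on a range to the existence of extra representations (it shows the excluded case $(2,2,2,2)$ really has $\geq 3$ partitions), whereas \Cref{lem:V2} is what converts a hypothetical part $\beta_{j_3}$ with $j_3>j+2$ or $j_3<j-1$ into the conclusion $v_k=2$ for $k\in\{j-1,\dots,j+2\}$; and to finish case (3) you still need the short argument that any partition supported in $\{j-1,\dots,j+2\}$ is one of the two known ones (indecomposability of $\alpha-\beta_{j-1}=\beta_{j+2}$ and linear independence of $\beta_j,\beta_{j+1}$).
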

\begin{proof}
The theorem follows from \Cref{lem:D2nec} and \Cref{lem:D2suf1,lem:D2suf2,lem:D2suf3} below.
\end{proof}

\begin{lemma}
\label{lem:D2nec}
Let $ \alpha \in \O_K^{+} $ and $ j \in \Z $, $ e \in \Z_{\geq 1} $, $ f \in \Z_{\geq 0} $ such that $ \alpha = e\beta_j+f\beta_{j+1} $. If $ p_K(\alpha|\ind) = 2 $, then one of the conditions (1), (2), (3) in \Cref{thm:D2} is satisfied.
\end{lemma}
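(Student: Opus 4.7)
By \Cref{thm:HK3}, the triple $(j,e,f)$ is determined by $\alpha$, and by \Cref{thm:D1} the assumption $p_K(\alpha|\ind) = 2$ rules out the uniquely decomposable range, forcing one of $e \geq v_j$, $f \geq v_{j+1}$, or $(e,f) = (v_j-1, v_{j+1}-1)$. My strategy is to use the relations $v_k\beta_k = \beta_{k-1}+\beta_{k+1}$ at $k = j$ and $k = j+1$, together with the collapse identity $(v_j-1)\beta_j+(v_{j+1}-1)\beta_{j+1} = \beta_{j-1}+\beta_{j+2}$ (an immediate consequence of those two relations), to manufacture further partitions of $\alpha$. Whenever three manifestly distinct partitions can be produced, the hypothesis $p_K(\alpha|\ind) = 2$ is violated, and this forces $(e,f)$ into one of the three configurations (1), (2), (3).

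First I would establish $e < 2v_j$ and $f < 2v_{j+1}$: iterating the shift-left move $0, 1, \dots, \lfloor e/v_j\rfloor$ times produces that many distinct partitions, so $\lfloor e/v_j\rfloor + 1 \leq 2$, and symmetrically for $f$. Second, I would rule out the regime $e \geq v_j$ \emph{and} $f \geq v_{j+1}$ by exhibiting the three partitions
\[
    e\beta_j+f\beta_{j+1},\quad \beta_{j-1}+(e-v_j)\beta_j+(f+1)\beta_{j+1},\quad (e+1)\beta_j+(f-v_{j+1})\beta_{j+1}+\beta_{j+2},
\]
whose supports are pairwise distinct. Third, if $v_j \leq e \leq 2v_j-1$ and $f = v_{j+1}-1$, then the shift-left partition contains the block $v_{j+1}\beta_{j+1}$, which can be further rewritten via $v_{j+1}\beta_{j+1} = \beta_j+\beta_{j+2}$, yielding the third partition $\beta_{j-1}+(e-v_j+1)\beta_j+\beta_{j+2}$. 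Hence the remaining possibilities for $(e, f)$ fit precisely into the skeleton of cases (1), (2), (3).

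To close case (1) I would verify each of the three exclusion patterns by exhibiting an explicit third partition: for $(e,f) = (2v_j-1, v_{j+1}-2)$, apply the collapse identity at $(j, j+1)$ inside the shift-left partition to obtain $2\beta_{j-1}+\beta_{j+2}$; for $(v_{j-1},e) = (2, 2v_j-1)$, apply the analogous collapse $\beta_{j-1}+(v_j-1)\beta_j = \beta_{j-2}+\beta_{j+1}$ (valid because $v_{j-1} = 2$) to obtain $\beta_{j-2}+(f+2)\beta_{j+1}$; for $(v_{j-1},e,f) = (2, 2v_j-2, v_{j+1}-2)$, a direct computation using $v_{j-1}\beta_{j-1} = \beta_{j-2}+\beta_j$ together with the other two recurrences shows that $\beta_{j-2}+\beta_{j+2}$ equals $\alpha$ and so provides a third partition. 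Case (2) then follows from case (1) by passing to the Galois conjugate, using $\beta_k' = \beta_{-k}$ and $v_{-k} = v_k$ to swap the roles of $e$ and $f$.

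Finally, for $(e,f) = (v_j-1, v_{j+1}-1)$ the collapse identity immediately supplies the second partition $\beta_{j-1}+\beta_{j+2}$. If additionally $(v_{j-1},v_j,v_{j+1},v_{j+2}) = (2,2,2,2)$, then \Cref{lem:EF1} applied with $(j-1, 3, 0)$ in place of $(j, t, k_0)$ yields a further partition $\beta_{j-2}+\beta_{j+3}$, contradicting $p_K(\alpha|\ind) = 2$ and forcing the stated exclusion. The main obstacle is the bookkeeping for the three exclusion patterns in (1): each lies on a boundary where two different shifting moves become simultaneously legal, and one must first guess the correct closed form of the emerging third partition, after which the verification is a short calculation with the basic recurrence $v_k\beta_k = \beta_{k-1}+\beta_{k+1}$.
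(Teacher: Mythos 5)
Your plan follows essentially the same route as the paper: Theorem~\ref{thm:D1} to exclude the uniquely decomposable box, the relations $v_k\beta_k=\beta_{k-1}+\beta_{k+1}$ iterated to force $e<2v_j$, $f<2v_{j+1}$ and to kill the mixed regimes, explicit third partitions for the boundary exclusion patterns in (1) (your rewrites $2\beta_{j-1}+\beta_{j+2}$, $\beta_{j-2}+(f+2)\beta_{j+1}$, $\beta_{j-2}+\beta_{j+2}$ are exactly the paper's), conjugation with $\beta_k'=\beta_{-k}$, $v_{-k}=v_k$ for the patterns in (2) (the paper just says ``analogously''), and a chain of length two for case (3) when $(v_{j-1},v_j,v_{j+1},v_{j+2})=(2,2,2,2)$ --- your invocation of \Cref{lem:EF1} with base index $j-1$, $t=3$, $k_0=0$ needs $v_k=2$ precisely on $\{j-1,\dots,j+2\}$ and gives $\beta_{j-2}+\beta_{j+3}$, the same partition the paper gets with $t=1$, $k_0=1$.

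The one concrete hole is in your skeleton phase. After your ``First'', ``Second'' and ``Third'' steps the surviving pairs are \emph{not} yet confined to the skeletons of (1), (2), (3): the strip $e=v_j-1$, $v_{j+1}\le f\le 2v_{j+1}-1$ survives, and it belongs to none of the three cases (condition (2) demands $e\le v_j-2$). The paper excludes it by the mirror of your Third step: from $\alpha=(v_j-1)\beta_j+f\beta_{j+1}$ one passes to $v_j\beta_j+(f-v_{j+1})\beta_{j+1}+\beta_{j+2}$ and then collapses $v_j\beta_j=\beta_{j-1}+\beta_{j+1}$, giving a third partition $\beta_{j-1}+(f-v_{j+1}+1)\beta_{j+1}+\beta_{j+2}$. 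Your conjugation remark, as written, is attached only to the three exclusion patterns of (2); if you instead conjugate the entire treatment of the regime $e\ge v_j$ (including your Third step) you recover exactly this missing exclusion, so the repair is immediate --- but it must be stated, otherwise the claim that the remaining $(e,f)$ ``fit precisely into the skeleton'' is false at that strip.
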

\begin{proof}
First, we show that if $ \alpha $ can be expressed as a sum of indecomposables in exactly $2$ ways, then either $ v_j \leq e \leq 2v_j-1 $ and $ 0 \leq f \leq v_{j+1}-2 $, or $ 1 \leq e \leq v_j-2 $ and $ v_{j+1} \leq f \leq 2v_{j+1}-1 $, or $ e = v_j-1 $ and $ f = v_{j+1}-1 $. If $ 1 \leq e \leq v_j-1 $, $ 0 \leq f \leq v_{j+1}-1 $, and $ (e,f) \neq (v_j-1, v_{j+1}-1) $, then $ p_K(\alpha|\ind) = 1 $ by \Cref{thm:D1}. On the other hand, if $ e \geq 2v_j $, then
\[
	\alpha = e\beta_j+f\beta_{j+1} = \beta_{j-1}+(e-v_j)\beta_j+(f+1)\beta_{j+1} = 2\beta_{j-1}+(e-2v_j)\beta_j+(f+2)\beta_{j+1},
\]
and if $ f \geq 2v_{j+1} $, then
\[
	\alpha = e\beta_j+f\beta_{j+1} = (e+1)\beta_j+(f-v_{j+1})\beta_{j+1}+\beta_{j+2} = (e+2)\beta_j+(f-2v_{j+1})\beta_{j+1}+2\beta_{j+2},
\]
hence $ p_K(\alpha|\ind) \geq 3 $ in both of these cases. If $ v_j \leq e \leq 2v_j-1 $ and $ v_{j+1}-1 \leq f $, then
\[
	\alpha = e\beta_j+f\beta_{j+1} = \beta_{j-1}+(e-v_j)\beta_j+(f+1)\beta_{j+1} = \beta_{j-1}+(e-v_j+1)\beta_j+(f+1-v_{j+1})\beta_{j+1}+\beta_{j+2},
\]
hence $ p_K(\alpha|\ind) \geq 3 $. Similarly, if $ v_j-1 \leq e $ and $ v_{j+1} \leq f \leq 2v_{j+1}-1 $, then
\[
	\alpha = e\beta_j+f\beta_{j+1} = (e+1)\beta_j+(f-v_{j+1})\beta_{j+1}+\beta_{j+2} = \beta_{j-1}+(e+1-v_j)\beta_j+(f-v_{j+1}+1)\beta_{j+1}+\beta_{j+2},
\]
hence $ p_K(\alpha,\ind) \geq 3 $.

\

Secondly, we show that if $ (e, f) = (2v_j-1, v_{j+1}-2) $ or $ (v_{j-1}, e) = (2, 2v_j-1) $ or $ (v_{j-1}, e, f) = (2, 2v_j-2, 2v_{j+1}-2) $, then $ p_K(\alpha|\ind) \geq 3 $. If $ (e, f) = (2v_j-1, v_{j+1}-2) $, then
\[
	\alpha = (2v_j-1)\beta_j+(v_{j+1}-2)\beta_{j+1} = \beta_{j-1}+(v_j-1)\beta_j+(v_{j+1}-1)\beta_{j+1} = 2\beta_{j-1}+\beta_{j+2}.
\]
If $ (v_{j-1}, e) = (2, 2v_j-1) $, then
\[
	\alpha = (2v_j-1)\beta_j+f\beta_{j+1} = \beta_{j-1}+(v_j-1)\beta_j+(f+1)\beta_{j+1} = \beta_{j-2}+(f+2)\beta_{j+1}.
\]
If $ (v_{j-1}, e, f) = (2, 2v_j-2, v_{j+1}-2) $, then
\[
	\alpha = (2v_j-2)\beta_j+(v_{j+1}-2)\beta_{j+1} = \beta_{j-1}+(v_j-2)\beta_j+(v_{j+1}-1)\beta_{j+1} = \beta_{j-2}+\beta_{j+2}.
\]
Thus, $ p_K(\alpha|\ind) \geq 3 $ in all of these cases. Analogously, one can show that if $ (e, f) = (v_j-2, 2v_{j+1}-1) $ or $ (f,v_{j+2}) = (2v_{j+1}-1,2) $ or $ (e, f, v_{j+2}) = (v_j-2, 2v_{j+1}-2, 2) $, then $ p_K(\alpha|\ind) \geq 3 $.

\

Finally, we show that if $ e = v_j-1 $, $ f = v_{j+1}-1 $, and $ (v_{j-1}, v_j, v_{j+1}, v_{j+2}) = (2,2,2,2) $, then $ p_K(\alpha|\ind) \geq 3 $. We have $ \alpha = \beta_j+\beta_{j+1} $, and by \Cref{lem:EF1} with $ t = 1 $ and $ k_0 = 1 $,
\[
	\beta_j+\beta_{j+1} = \beta_{j-1}+\beta_{j+2} = \beta_{j-2}+\beta_{j+3},
\]
hence $ p_K(\alpha|\ind) \geq 3 $.
\end{proof}

\begin{lemma}
\label{lem:D2suf1}
Let $ \alpha \in \O_K^{+} $ and $ j \in \Z $, $ e \in \Z_{\geq 1} $, $ f \in \Z_{\geq 0} $ such that $ \alpha = e\beta_j+f\beta_{j+1} $. If (1) in \Cref{thm:D2} holds, i.e., $ v_j \leq e \leq 2v_j-1 $, $ 0 \leq f \leq v_{j+1}-2 $, and
\[
	(e, f) \neq (2v_j-1, v_{j+1}-2),\quad (v_{j-1},e) \neq (2,2v_j-1),\quad (v_{j-1},e,f) \neq (2,2v_j-2,v_{j+1}-2),
\]
then $ p_K(\alpha|\ind) = 2 $.
\end{lemma}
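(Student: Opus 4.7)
The plan is to prove the lemma in two parts: first exhibit two indecomposable partitions of $\alpha$, then show no third exists. For existence, I would take Rep~1 to be the canonical form $\alpha = e\beta_j + f\beta_{j+1}$, and Rep~2 to be obtained by applying the identity $v_j\beta_j = \beta_{j-1}+\beta_{j+1}$ once, yielding $\alpha = \beta_{j-1} + (e-v_j)\beta_j + (f+1)\beta_{j+1}$. The hypotheses $v_j \le e \le 2v_j-1$ and $f \le v_{j+1}-2$ guarantee $e-v_j \in [0,v_j-1]$ and $f+1 \in [1,v_{j+1}-1]$, so Rep~2 is a genuine sum of indecomposables, distinct from Rep~1, giving $p_K(\alpha|\ind) \ge 2$.

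For the upper bound, I would parametrize all partitions of $\alpha$ by finitely-supported integer sequences. By \Cref{thm:HK3}, every partition $\alpha = \sum_k n_k\beta_k$ satisfies
\[
n_k = e\delta_{k,j} + f\delta_{k,j+1} + c_{k-1} - v_k c_k + c_{k+1}
\]
for a unique finitely supported $(c_k) \in \Z^{\Z}$ (uniqueness because the recurrence $c_{k+1} = v_k c_k - c_{k-1}$ with $v_k \ge 2$ admits no nonzero finitely supported solution). Rep~1 corresponds to $(c_k) \equiv 0$ and Rep~2 to $c_j = 1$ with all other $c_k = 0$, so the task reduces to showing that under (1) no other $(c_k)$ keeps all $n_k \ge 0$.

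The main tool is a growth estimate on $(c_k)$. If $[L,R]$ is the minimal support of $(c_k)$ and $L-1 \notin \{j,j+1\}$, then the constraint $n_{L-1} = c_L \ge 0$ combined with $c_L \neq 0$ forces $c_L \ge 1$, and similarly $c_R \ge 1$. For interior $k \notin \{j,j+1\}$, non-negativity $n_k \ge 0$ reads $c_{k+1} \ge v_k c_k - c_{k-1} \ge 2c_k - c_{k-1}$, so $(c_k)$ is strictly monotone-growing away from $\{j,j+1\}$. This growth can only be absorbed by the additive slack $e,f$ at positions $j,j+1$. Combined with the bounds $e \le 2v_j-1$ and $f \le v_{j+1}-2$, this confines the support of $(c_k)$ to a short window around $\{j,j+1\}$; in particular, the rightward supports $\{j+1\}$, $\{j+1,j+2\}$, $\{j,j+1,j+2\}$ are ruled out at once by $f \le v_{j+1}-2$.

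The remainder is a finite case analysis over the small admissible support shapes $\{j\}$, $\{j-1,j\}$, $\{j,j+1\}$, $\{j-1,j,j+1\}$, $\{j-2,j-1,j,j+1\}$, and their leftward extensions along chains of $v_\bullet = 2$. For each shape, the non-negativity inequalities become a linear system in $(c_k, v_\bullet, e, f)$ with a unique minimal solution, and a nonzero admissible $(c_k)$ other than Rep~2 turns out to exist precisely in the three listed excluded configurations: $\{j,j+1\}$ forces $(e,f)=(2v_j-1,v_{j+1}-2)$; $\{j-1,j\}$ forces $(v_{j-1},e)=(2,2v_j-1)$; and $\{j-1,j,j+1\}$ together with its leftward extensions forces either $(v_{j-1},e)=(2,2v_j-1)$ or $(v_{j-1},e,f)=(2,2v_j-2,v_{j+1}-2)$. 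Under (1) none of these is active, so $p_K(\alpha|\ind)=2$. The hardest step will be handling chains of $v_\bullet = 2$, which a priori permit arbitrarily long supports (e.g. $\{j-3,j-2,j-1,j,j+1\}$ when the five consecutive $v$'s all equal $2$): one must show that each such extension is only consistent with $(e,f)$ and the boundary $v_\bullet$ sitting exactly at the values of one of the three exclusions, so the list is complete.
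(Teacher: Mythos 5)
Your lower bound is exactly the paper's first step and is fine: condition (1) guarantees that $\alpha=e\beta_j+f\beta_{j+1}=\beta_{j-1}+(e-v_j)\beta_j+(f+1)\beta_{j+1}$ are two distinct partitions into indecomposables. For the upper bound you take a genuinely different route from the paper, parametrizing all partitions by finitely supported shift vectors $(c_k)$ through the relation lattice of \Cref{thm:HK3}; that framework is legitimate (your uniqueness argument for $(c_k)$ is correct), but the actual proof that no third partition exists is not there. Two concrete gaps. First, your growth estimate rests on $v_kc_k\ge 2c_k$, which only holds when $c_k\ge 0$, and you never rule out negative coefficients; these are a priori possible when an end of the support sits next to $j$ or $j+1$, since then the only constraint is of the form $e+c_{j+1}\ge 0$ or $f+c_{j+2}\ge 0$, so your claim that the extreme entries of the support are $\ge 1$ does not cover all cases. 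Second, and decisively, the whole classification of admissible supports --- above all the case of chains of $v_\bullet=2$, where the support may stretch arbitrarily far on either side of $\{j,j+1\}$, and also supports reaching to the right of $j+2$, which you do not list --- is precisely the content of the lemma, and you explicitly leave it as ``one must show''. Until that analysis is carried out, the assertion that a nonzero $(c_k)\neq\delta_{\cdot,j}$ exists only in the three excluded configurations is a claim, not a proof. A smaller but genuine error: the support $\{j,j+1,j+2\}$ is \emph{not} ``ruled out at once by $f\le v_{j+1}-2$''; for instance $v_j=v_{j+1}=v_{j+2}=2$, $(c_j,c_{j+1},c_{j+2})=(3,2,1)$, $f=0$ satisfies every non-negativity constraint except $n_j\ge 0$, which fails only because it would force $e\ge 4>2v_j-1$; so the upper bound on $e$ is indispensable there.

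For comparison, the paper sidesteps the lattice bookkeeping entirely: it splits condition (1) into four subcases ($e\le 2v_j-2$ or $e=2v_j-1$, $f\le v_{j+1}-3$ or $f=v_{j+1}-2$, invoking $v_{j-1}\ge 3$ exactly where the exclusions supply it) and shows by direct totally positive inequalities --- comparing $\alpha$ against $2v_j\beta_j+v_{j+1}\beta_{j+1}=2\beta_{j-1}+\beta_j+2\beta_{j+1}+\beta_{j+2}$ and its conjugate --- that no $\beta_k$ with $k\ge j+2$ or $k\le j-2$ satisfies $\beta_k\preceq\alpha$. Every partition then lies in the span of $\beta_{j-1},\beta_j,\beta_{j+1}$, and linear independence of $\beta_j,\beta_{j+1}$ over $\Q$ together with $v_j\le e\le 2v_j-1$ leaves exactly the coefficient vectors $(0,e,f)$ and $(1,e-v_j,f+1)$. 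That argument never has to confront long chains of $v_\bullet=2$, which is exactly the part your sketch postpones; if you want to salvage your approach, that chain analysis (plus the sign issue) is what must be written out.
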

\begin{proof}
Assume that the condition holds. We have
\[
	\alpha = e\beta_j+f\beta_{j+1} = \beta_{j-1}+(e-v_j)\beta_j+(f+1)\beta_{j+1},
\]
hence $ p_K(\alpha|\ind) \geq 2 $. It remains to show that there are no other partitions of $ \alpha $ with indecomposable parts.

First, suppose for contradiction that there exists $ k \in \Z $, $ k \geq j+2 $ such that $ \beta_k \preceq \alpha $. We have $ \beta_{j+2} \leq \beta_k \leq \alpha $.

Case $ e \leq 2v_j-2 $ and $ f \leq v_{j+1}-3 $: we have
\[
	2\beta_j+3\beta_{j+1}+\beta_{j+2} \leq (e+2)\beta_j+(f+3)\beta_{j+1} \leq 2v_j\beta_j+v_{j+1}\beta_{j+1} = 2\beta_{j-1}+2\beta_{j+1}+\beta_j+\beta_{j+2},
\]
hence
\[
	\beta_j+\beta_{j+1} \leq 2\beta_{j-1},
\]
a contradiction.

Case $ e = 2v_j-1 $, $ f \leq v_{j+1}-3 $, and $ v_{j-1} \neq 2 $: we have
\[
	\beta_j+3\beta_{j+1}+\beta_{j+2} \leq (e+1)\beta_j+(f+3)\beta_{j+1} \leq 2v_j\beta_j+v_{j+1}\beta_{j+1} = 2\beta_{j-1}+2\beta_{j+1}+\beta_j+\beta_{j+2},
\]
hence $ \beta_{j+1} \leq 2\beta_{j-1} $. Since $ v_{j-1} \neq 2 $, we have $ v_{j-1} \geq 3 $, and so
\[
	2\beta_j \leq v_j\beta_j = \beta_{j-1}+\beta_{j+1} \leq 3 \beta_{j-1} \leq v_{j-1}\beta_{j-1} = \beta_{j-2}+\beta_j,
\]
hence $ \beta_j \leq \beta_{j-2} $, a contradiction.

Case $ e \leq 2v_j-3 $ and $ f = v_{j+1}-2 $: we have
\[
	3\beta_j+2\beta_{j+1}+\beta_{j+2} \leq (e+3)\beta_j+(f+2)\beta_{j+1} \leq 2v_j\beta_j+v_{j+1}\beta_{j+1} = 2\beta_{j-1}+2\beta_{j+1}+\beta_j+\beta_{j+2},
\]
hence $ 2\beta_j \leq 2\beta_{j-1} $, a contradiction.

Case $ e = 2v_j-2 $, $ f = v_{j+1}-2 $, and $ v_{j-1} \neq 2 $: we have
\[
	2\beta_j+2\beta_{j+1}+\beta_{j+2} \leq (e+2)\beta_j+(f+2)\beta_{j+1} = 2v_j\beta_j+v_{j+1}\beta_{j+1} = 2\beta_{j-1}+2\beta_{j+1}+\beta_j+\beta_{j+2},
\]
hence $ \beta_j \leq 2\beta_{j-1} $. Since $ v_{j-1} \neq 2 $, we have $ v_{j-1} \geq 3 $, and so
\[
	\beta_{j-1}+\beta_j \leq 3\beta_{j-1} \leq v_{j-1}\beta_{j-1} = \beta_{j-2}+\beta_j,
\]
hence $ \beta_{j-1} \leq \beta_{j-2} $, a contradiction.

\

Secondly, suppose for contradiction that there exists $ k \in \Z $, $ k \leq j-2 $, such that $ \beta_k \preceq \alpha $. We have $ \beta_{j-2}' \leq \beta_k' \leq \alpha' $.

Case $ e \leq 2v_j-2 $ and $ f \leq v_{j+1}-3 $: we have
\[
	\beta_{j-2}'+2\beta_j'+3\beta_{j+1}' \leq (e+2)\beta_j'+(f+3)\beta_{j+1}' \leq 2v_j\beta_j'+v_{j+1}\beta_{j+1}' = 2\beta_{j-1}'+2\beta_{j+1}'+\beta_j'+\beta_{j+2}',
\]
hence
\[
	\beta_{j-2}'+\beta_j'+\beta_{j+1}' \leq 2\beta_{j-1}'+\beta_{j+2}'.
\]
It follows that
\[
	2\beta_{j-1}'+\beta_{j+1}' \leq v_{j-1}\beta_{j-1}'+\beta_{j+1}' = \beta_{j-2}'+\beta_j'+\beta_{j+1}' \leq 2\beta_{j-1}'+\beta_{j+2}',
\]
hence $ \beta_{j+1}' \leq \beta_{j+2}' $, a contradiction.

Case $ e = 2v_j-1 $, $ f \leq v_{j+1}-3 $, and $ v_{j-1} \neq 2 $: we have
\[
	\beta_{j-2}'+\beta_j'+3\beta_{j+1}' \leq (e+1)\beta_j'+(f+3)\beta_{j+1}' \leq 2v_j\beta_j'+v_{j+1}\beta_{j+1}' = 2\beta_{j-1}'+2\beta_{j+1}'+\beta_j'+\beta_{j+2}',
\]
hence
\[
	\beta_{j-2}'+\beta_{j+1}' \leq 2\beta_{j-1}'+\beta_{j+2}'.
\]
Since $ v_{j-1} \geq 3 $, it follows that
\[
	3\beta_{j-1}'+\beta_{j+1}' \leq v_{j-1}\beta_{j-1}'+\beta_{j+1}' = \beta_{j-2}'+\beta_j'+\beta_{j+1}' \leq 2\beta_{j-1}'+\beta_j'+\beta_{j+2}',
\]
and so
\[
	2\beta_j' \leq v_j\beta_j' = \beta_{j-1}'+\beta_{j+1}' \leq \beta_j'+\beta_{j+2}',
\]
hence $ \beta_j' \leq \beta_{j+2}' $, a contradiction.

Case $ e \leq 2v_j-3 $ and $ f = v_{j+1}-2 $: we have
\[
	\beta_{j-2}'+3\beta_j'+2\beta_{j+1}' \leq (e+3)\beta_j'+(f+2)\beta_{j+1}' \leq 2v_j\beta_j'+v_{j+1}\beta_{j+1}' = 2\beta_{j-1}'+2\beta_{j+1}'+\beta_j'+\beta_{j+2}',
\]
hence
\[
	\beta_{j-2}'+2\beta_j' \leq 2\beta_{j-1}'+\beta_{j+2}'.
\]
It follows that
\[
	2\beta_{j-1}'+\beta_j' \leq v_{j-1}\beta_{j-1}'+\beta_j' = \beta_{j-2}'+2\beta_j' \leq 2\beta_{j-1}'+\beta_{j+2}',
\]
hence $ \beta_j' \leq \beta_{j+2}' $, a contradiction.

Case $ e = 2v_j-2 $, $ f = v_{j+1}-2 $, and $ v_{j-1} \neq 2 $: we have
\[
	\beta_{j-2}'+2\beta_j'+2\beta_{j+1}' \leq (e+2)\beta_j'+(f+2)\beta_{j+1}' = 2v_j\beta_j'+v_{j+1}\beta_{j+1}' = 2\beta_{j-1}'+2\beta_{j+1}'+\beta_j'+\beta_{j+2}',
\]
hence
\[
	\beta_{j-2}'+\beta_j' \leq 2\beta_{j-1}'+\beta_{j+2}'.
\]
Since $ v_{j-1} \geq 3 $, it follows that
\[
	3\beta_{j-1}' \leq v_{j-1}\beta_{j-1}' = \beta_{j-2}'+\beta_j' \leq 2\beta_{j-1}'+\beta_{j+2}',
\]
hence $ \beta_{j-1}' \leq \beta_{j+2}' $, a contradiction.

\

We showed that every partition of $ \alpha $ with indecomposable parts is of the form
\[
	\alpha = a_{j-1}\beta_{j-1}+a_j\beta_j+a_{j+1}\beta_{j+1},
\]
where $ a_{j-1}, a_j, a_{j+1} \in \Z_{\geq 0} $. Using $ \beta_{j-1} = v_j\beta_j-\beta_{j+1} $, we get
\[
	e\beta_j+f\beta_{j+1} = \left(a_{j-1}v_j+a_j\right)\beta_j+\left(a_{j+1}-a_{j-1}\right)\beta_{j+1}.
\]
The elements $ \beta_j $ and $ \beta_{j+1} $ are linearly independent over $ \Q $, hence $ e = a_{j-1}v_j+a_j $ and $ f = a_{j+1}-a_{j-1} $. Since $ v_j \leq e \leq 2v_j-1 $, the only possibilities for $ a_{j-1} $ are $ 0 $ or $ 1 $, which gives us $ (a_{j-1},a_j,a_{j+1}) = (0,e,f) $ or $ (a_{j-1},a_j,a_{j+1}) = (1,e-v_j,f+1) $. Thus, $ p_K(\alpha|\ind) = 2 $.
\end{proof}

\begin{lemma}
\label{lem:D2suf2}
Let $ \alpha \in \O_K^{+} $ and $ j \in \Z $, $ e \in \Z_{\geq 1} $, $ f \in \Z_{\geq 0} $ such that $ \alpha = e\beta_j+f\beta_{j+1} $. If (2) in \Cref{thm:D2} holds, i.e., $ 1 \leq e \leq v_j-2 $, $ v_{j+1} \leq f \leq 2v_{j+1}-1 $, and
\[
	(e, f) \neq (v_j-2, 2v_{j+1}-1),\quad (f,v_{j+2}) \neq (2v_{j+1}-1,2),\quad (e,f,v_{j+2}) \neq (v_j-2, 2v_{j+1}-2,2),
\]
then $ p_K(\alpha|\ind) = 2 $.
\end{lemma}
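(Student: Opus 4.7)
The plan is to reduce Lemma \ref{lem:D2suf2} to the already-proved Lemma \ref{lem:D2suf1} by passing to the Galois conjugate. Conditions (1) and (2) of Theorem \ref{thm:D2} are manifestly symmetric: (2) is just (1) with the roles of $e,f$ swapped and the index reflected, so applying Lemma \ref{lem:D2suf1} to $\alpha'$ (for a suitable shift of the index) should immediately deliver the result. Since $p_K(\alpha|\ind) = p_K(\alpha'|\ind)$ (recalled at the end of \Cref{sec:Prelim}), it suffices to verify that $\alpha'$ meets the hypotheses of Lemma \ref{lem:D2suf1} at the shifted index.

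Concretely, using $\beta_{-k} = \beta_k'$, I would write
\[
    \alpha' = e\beta_j' + f\beta_{j+1}' = e\beta_{-j} + f\beta_{-j-1} = f\beta_{j^*} + e\beta_{j^*+1},\qquad j^* := -j-1,
\]
so in the normal form of Theorem \ref{thm:HK3} the pair $(e^*,f^*) = (f,e)$ and the index is $j^*$. The relation $v_{-k} = v_k$ gives
\[
    v_{j^*} = v_{j+1},\quad v_{j^*+1} = v_j,\quad v_{j^*-1} = v_{j+2},\quad v_{j^*+2} = v_{j-1}.
\]
Then the inequalities $v_{j+1} \le f \le 2v_{j+1}-1$ and $1 \le e \le v_j-2$ of hypothesis (2) translate into $v_{j^*} \le e^* \le 2v_{j^*}-1$ and $0 \le f^* \le v_{j^*+1}-2$, which is exactly the inequality part of hypothesis (1) for $\alpha'$ at the index $j^*$.

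The bulk of the work, such as it is, lies in checking that the three exclusion clauses match up. Substituting the dictionary above, one verifies
\begin{align*}
    (e,f) \ne (v_j-2,\,2v_{j+1}-1) &\iff (e^*,f^*) \ne (2v_{j^*}-1,\,v_{j^*+1}-2),\\
    (f,v_{j+2}) \ne (2v_{j+1}-1,\,2) &\iff (v_{j^*-1},e^*) \ne (2,\,2v_{j^*}-1),\\
    (e,f,v_{j+2}) \ne (v_j-2,\,2v_{j+1}-2,\,2) &\iff (v_{j^*-1},e^*,f^*) \ne (2,\,2v_{j^*}-2,\,v_{j^*+1}-2),
\end{align*}
so the three exclusion conditions in (2) correspond term by term to the three exclusion conditions in (1) for $\alpha'$ at index $j^*$. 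Invoking Lemma \ref{lem:D2suf1} yields $p_K(\alpha'|\ind) = 2$, and therefore $p_K(\alpha|\ind) = 2$.

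The only real obstacle is bookkeeping: one has to be careful about the off-by-one shifts in the indices (since the conjugation sends $\beta_j,\beta_{j+1}$ to $\beta_{-j-1},\beta_{-j}$ rather than to $\beta_{-j},\beta_{-j+1}$) and verify that the symmetry really does line up all four clauses of condition (2) with those of condition (1). Once this is set up, no new combinatorial argument is needed; one could of course reproduce the case-by-case analysis of \Cref{lem:D2suf1} mutatis mutandis, but this would be substantially longer and add nothing conceptually.
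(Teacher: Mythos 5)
Your proposal is correct and is essentially identical to the paper's proof: the paper also sets $j' = -(j+1)$, $(e',f') = (f,e)$, uses $v_{-k} = v_k$ to translate condition (2) into condition (1) for $\alpha'$, and then invokes \Cref{lem:D2suf1} together with $p_K(\alpha|\ind) = p_K(\alpha'|\ind)$. Your explicit verification of the three exclusion clauses is a slightly more detailed bookkeeping of the same symmetry argument.
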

\begin{proof}
If we let $ j' = -(j+1) $, $ e' = f $, and $ f' = e $, then
\[
	\alpha' = e\beta_j'+f\beta_{j+1}' = f\beta_{-(j+1)}+e\beta_{-j} = e'\beta_{j'}+f'\beta_{j'+1}.
\]
Since $ v_{j'-1} = v_{j+2} $, $ v_{j'} = v_{j+1} $, and $ v_{j'+1} = v_j $, it follows that $ \alpha' $ satisfies condition (1) in \Cref{thm:D2} (with $ j' $, $ e' $, and $ f' $ in place of $ j $, $ e $, and $ f $). By \Cref{lem:D2suf1}, $ p_K(\alpha|\ind) = p_K(\alpha'|\ind) = 2 $.
\end{proof}

\begin{lemma}
\label{lem:D2suf3}
Let $ \alpha \in \O_K^{+} $ and $ j \in \Z $, $ e \in \Z_{\geq 1} $, $ f \in \Z_{\geq 0} $ such that $ \alpha = e\beta_j+f\beta_{j+1} $. If (3) in \Cref{thm:D2} holds, i.e., $ e = v_j-1 $, $ f = v_{j+1}-1 $, and $ (v_{j-1},v_j,v_{j+1},v_{j+2}) \neq (2,2,2,2) $, then $ p_K(\alpha|\ind) = 2 $.
\end{lemma}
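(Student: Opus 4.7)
The plan is to show that $\alpha$ has exactly two partitions into indecomposables: the given expression $(v_j-1)\beta_j+(v_{j+1}-1)\beta_{j+1}$ and a ``collapsed'' partition $\beta_{j-1}+\beta_{j+2}$. I would begin by producing the second partition explicitly. Using $v_j\beta_j=\beta_{j-1}+\beta_{j+1}$ and $v_{j+1}\beta_{j+1}=\beta_j+\beta_{j+2}$ from \Cref{lem:vj}, a direct manipulation yields
\[
	\alpha=(v_j-1)\beta_j+(v_{j+1}-1)\beta_{j+1}=\beta_{j-1}-\beta_j+v_{j+1}\beta_{j+1}=\beta_{j-1}+\beta_{j+2},
\]
which already gives $p_K(\alpha|\ind)\geq 2$.

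Next I would apply \Cref{lem:V2} to the representation $\alpha=\beta_{j-1}+\beta_{j+2}$ to argue that no indecomposable $\beta_k$ with $k\leq j-2$ or $k\geq j+3$ can occur in any partition of $\alpha$; otherwise \Cref{lem:V2} would force $v_i=2$ for every $i\in\{j-1,j,j+1,j+2\}$, contradicting the hypothesis. Therefore every partition has the form
\[
	\alpha=a_{j-1}\beta_{j-1}+a_j\beta_j+a_{j+1}\beta_{j+1}+a_{j+2}\beta_{j+2},\qquad a_k\in\Z_{\geq 0}.
\]
By the relations part of \Cref{thm:HK3}, any two such expressions differ by a $\Z$-combination of $\beta_{j-1}-v_j\beta_j+\beta_{j+1}=0$ and $\beta_j-v_{j+1}\beta_{j+1}+\beta_{j+2}=0$, so all admissible coefficient vectors are parametrized by $(c_1,c_2)\in\Z^2$ via
\[
	(a_{j-1},a_j,a_{j+1},a_{j+2})=\bigl(c_1,\,v_j-1-c_1v_j+c_2,\,v_{j+1}-1+c_1-c_2v_{j+1},\,c_2\bigr).
\]

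The remaining step is a short diophantine check: I would impose non-negativity of all four coordinates and split on whether $c_1=0$. If $c_1=0$, the constraint on $a_{j+1}$ forces $c_2v_{j+1}\leq v_{j+1}-1$, hence $c_2=0$, which recovers the original partition; symmetrically $c_2=0$ forces $c_1=0$. In the remaining case $c_1,c_2\geq 1$, the constraints read $c_2\geq v_j(c_1-1)+1$ and $c_1\geq v_{j+1}(c_2-1)+1$, and substituting one into the other yields $(c_1-1)(v_jv_{j+1}-1)\leq 0$. Since $v_jv_{j+1}\geq 4$, this forces $c_1=1$ and then $c_2=1$, producing exactly $\beta_{j-1}+\beta_{j+2}$. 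Only $(c_1,c_2)\in\{(0,0),(1,1)\}$ are thus admissible, which gives $p_K(\alpha|\ind)=2$.

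The main obstacle is really the reduction to a four-term window via \Cref{lem:V2}, and the hypothesis $(v_{j-1},v_j,v_{j+1},v_{j+2})\neq(2,2,2,2)$ is precisely what is needed to make that reduction go through; once it is in place, the combinatorics of the parametrization is routine and the inequality $v_jv_{j+1}\geq 4$ closes the argument cleanly.
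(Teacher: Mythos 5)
Your proof is correct and follows essentially the same route as the paper: exhibit the collapsed partition $\beta_{j-1}+\beta_{j+2}$, then use \Cref{lem:V2} together with the hypothesis $(v_{j-1},v_j,v_{j+1},v_{j+2})\neq(2,2,2,2)$ to confine all possible parts to $\{\beta_{j-1},\beta_j,\beta_{j+1},\beta_{j+2}\}$. The only difference is the final count, where the paper observes that indecomposability of $\beta_{j-1}$ and $\beta_{j+2}$ forces any partition containing either of them to be $(\beta_{j-1},\beta_{j+2})$, while you reach the same conclusion by the slightly longer (but valid) parametrization of relations and the non-negativity check.
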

\begin{proof}
Assume that the condition holds. We have
\[
	\alpha = (v_j-1)\beta_j+(v_{j+1}-1)\beta_{j+1} = \beta_{j-1}+\beta_{j+2},
\]
hence $ p_K(\alpha|\ind) \geq 2 $. It remains to show that $ p_K(\alpha|\ind) \leq 2 $.

Suppose for contradiction that $ \beta_{j_3} \preceq \alpha $ for some $ j_3 \in \Z $ such that $ j_3 > j+2 $ or $ j_3 < j-1 $. By \Cref{lem:V2}, $ v_k = 2 $ for $ k \in \{j-1,j,j+1,j+2\} $, a contradiction.

Since the elements $ \beta_{j-1} $ and $ \beta_{j+2} $ are indecomposable, the only partition of $ \alpha $ containing $ \beta_{j-1} $ or $ \beta_{j+2} $ is $ \beta_{j-1}+\beta_{j+2} $. This concludes the proof that $ p_K(\alpha|\ind) = 2 $.
\end{proof}

In \Cref{thm:D2b} below, we provide an explicit characterization of the elements $ \alpha \in \O_K^+ $ such that $ p_K(\alpha|\ind) = 2 $ expressed in terms of the $ u_i $'s instead of $ v_k $'s. Then we use it to improve the bound on $ \Nm(\alpha) $ from \Cref{thm:N} with $ m = 2 $.

Suppose that $ \alpha = e\alpha_{i,r}+f\alpha_{i,r+1} $, where $ i \geq -1 $ is odd and $ 0 \leq r \leq u_{i+2}-1 $. Let $ j \in \Z_{\geq 0} $ be such that $ \beta_j = \alpha_{i,r} $, hence $ \alpha = e\beta_j+f\beta_{j+1} $. By \Cref{lem:vj}, we have
\[
	v_j = \begin{cases}
		u_{i+1}+2&\text{if $ r = 0 $,}\\
		2&\text{if $ 1 \leq r \leq u_{i+2}-1 $,}
	\end{cases}\qquad
	v_{j+1} = \begin{cases}
		2&\text{if $ 0 \leq r \leq u_{i+2}-2 $,}\\
		u_{i+3}+2&\text{if $ r = u_{i+2}-1 $.}
	\end{cases}
\]
If $ r \geq 1 $, then $ \beta_{j-1} = \alpha_{i,r-1} $, and if $ r = 0 $ and $ i \geq 1 $, then $ \beta_{j-1} = \alpha_{i-2,u_i-1} $. On the other hand, if $ r = 0 $ and $ i = -1 $, then $ j = 0 $ and $ v_{j-1} = v_{-1} = v_1 $. Thus,
\[
	v_{j-1} = \begin{cases}
		2&\text{if $ r = 0 $ and $ u_{|i|} \geq 2 $,}\\
		u_{|i-1|}+2&\text{if $ r = 0 $ and $ u_{|i|} = 1 $,}\\
		u_{i+1}+2&\text{if $ r = 1 $,}\\
		2&\text{if $ 2 \leq r \leq u_{i+2}-1 $.}
	\end{cases}
\]
If $ r \leq u_{i+2}-3 $, then $ \beta_{j+2} = \alpha_{i,r+2} $, and if $ r = u_{i+2}-2 $, then $ \beta_{j+2} = \alpha_{i+2,0} $. If $ r = u_{i+2}-1 $ and $ u_{i+4} \geq 2 $, then $ \beta_{j+2} = \alpha_{i+2,1} $, and if $ r = u_{i+2}-1 $ and $ u_{i+4} = 1 $, then $ \beta_{j+2} = \alpha_{i+4,0} $. Thus,
\[
	v_{j+2} = \begin{cases}
		2&\text{if $ 0 \leq r \leq u_{i+2}-3 $,}\\
		u_{i+3}+2&\text{if $ r = u_{i+2}-2 $,}\\
		2&\text{if $ r = u_{i+2}-1 $ and $ u_{i+4} \geq 2 $,}\\
		u_{i+5}+2&\text{if $ r = u_{i+2}-1 $ and $ u_{i+4} = 1 $.}
	\end{cases}
\]

\begin{lemma}
\label{lem:D2b1}
Let $ \alpha = e\alpha_{i,r}+f\alpha_{i,r+1} $, where $ i \geq -1 $ is odd and $ 0 \leq r \leq u_{i+2}-1 $. The element $ \alpha $ satisfies condition (1) in \Cref{thm:D2} if and only if one of the following holds:
\begin{enumerate}[(a)]
\item[(a)] $ r = 0 $, $ u_{i+1}+2 \leq e \leq 2u_{i+1}+1 $, $ f = 0 $, and $ u_{i+2} \geq 2 $,
\item[(b)] $ r = 0 $, $ e = 2u_{i+1}+2 $, $ f = 0 $, $ u_{|i|} = 1 $, and $ u_{i+2} \geq 2 $,
\item[(c)] $ r = 0 $, $ u_{i+1}+2 \leq e \leq 2u_{i+1}+1 $, $ 0 \leq f \leq u_{i+3} $, $ u_{|i|} \geq 2 $, and $ u_{i+2} = 1 $,
\item[(d)] $ r = 0 $, $ e = 2u_{i+1}+2 $, $ 0 \leq f \leq u_{i+3}-1 $, $ u_{|i|} \geq 2 $, and $ u_{i+2} = 1 $,
\item[(e)] $ r = 0 $, $ u_{i+1}+2 \leq e \leq 2u_{i+1}+2 $, $ 0 \leq f \leq u_{i+3} $, $ u_{|i|} = 1 $, and $ u_{i+2} = 1 $,
\item[(f)] $ r = 0 $, $ e = 2u_{i+1}+3 $, $ 0 \leq f \leq u_{i+3}-1 $, $ u_{|i|} = 1 $, and $ u_{i+2} = 1 $,
\item[(g)] $ r = 1 $, $ e = 2 $, $ f = 0 $, and $ u_{i+2} \geq 3 $,
\item[(h)] $ r = 1 $, $ e = 2 $, $ 0 \leq f \leq u_{i+3} $, and $ u_{i+2} = 2 $,
\item[(i)] $ r = 1 $, $ e = 3 $, $ 0 \leq f \leq u_{i+3}-1 $, and $ u_{i+2} = 2 $,
\item[(j)] $ r = u_{i+2}-1 $, $ e = 2 $, $ 0 \leq f \leq u_{i+3}-1 $, and $ u_{i+2} \geq 3 $.
\end{enumerate}
\end{lemma}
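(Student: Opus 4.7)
The plan is to carry out a systematic case analysis on $r$, translating condition~(1) of \Cref{thm:D2} into conditions on $e$, $f$ and the $u_i$'s by substituting the explicit formulas for $v_{j-1}$, $v_j$, $v_{j+1}$, $v_{j+2}$ derived in the paragraph immediately preceding the lemma statement. There are five constituents to translate: the two range conditions $v_j \leq e \leq 2v_j-1$ and $0 \leq f \leq v_{j+1}-2$, together with the three forbidden tuples $(e,f) \neq (2v_j-1,v_{j+1}-2)$, $(v_{j-1},e) \neq (2,2v_j-1)$, and $(v_{j-1},e,f) \neq (2,2v_j-2,v_{j+1}-2)$.

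The primary split is on whether $r = 0$ or $r \geq 1$, since this controls $v_j$ and hence the admissible range of $e$. For $r = 0$ one has $v_j = u_{i+1}+2$, so $e$ ranges in $\{u_{i+1}+2,\dots,2u_{i+1}+3\}$; I would then subdivide on $u_{i+2}$ (which determines $v_{j+1}$ and hence whether $f = 0$ is forced or $f$ can reach $u_{i+3}$) and on $u_{|i|}$ (which determines whether $v_{j-1} = 2$, the condition that activates the $v_{j-1}$-exclusions). The four combinations of these two binary choices produce cases (a)--(b) when $u_{i+2} \geq 2$ and (c)--(f) when $u_{i+2} = 1$. For $r \geq 1$, $v_j = 2$ forces $e \in \{2,3\}$; the relevant subdivision is on whether $r = 1$ (so $v_{j-1} = u_{i+1}+2 \geq 3$, making the $v_{j-1}$-exclusions vacuous) or $r \geq 2$ (so $v_{j-1} = 2$), with a further split on $u_{i+2}$ inside $r = 1$. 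This yields (g)--(i) from $r = 1$ and (j) from $r = u_{i+2}-1 \geq 2$.

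The one regime that needs a small observation rather than pure bookkeeping is $2 \leq r \leq u_{i+2}-2$: here $v_{j-1} = v_j = v_{j+1} = 2$, so condition~(1) reduces to $e \in \{2,3\}$, $f = 0$, $(e,f) \neq (3,0)$, and $(v_{j-1},e,f) \neq (2,2,0)$, which together admit no solutions and thus contribute no case to the list. The main difficulty of the proof is organizational: making sure each forbidden tuple is correctly translated in every regime, and that the boundary identification $r = 0 = u_{i+2}-1$ (occurring exactly when $u_{i+2} = 1$) is handled consistently on the $r = 0$ side without being double counted on the $r = u_{i+2}-1$ side.
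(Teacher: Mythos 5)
Your proposal follows essentially the same route as the paper's proof: a case analysis on $r$ (with the regimes $r=0$, $r=1$, $2 \leq r \leq u_{i+2}-2$, $r = u_{i+2}-1$), substituting the explicit values of $v_{j-1},v_j,v_{j+1}$ into condition (1) of \Cref{thm:D2} and subdividing on $u_{i+2}$ and $u_{|i|}$ exactly as the paper does, including the observation that the middle regime yields no solutions and the correct treatment of the boundary cases $u_{i+2}=1$ and $u_{i+2}=2$. The outline is correct; carrying out the routine substitutions case by case completes it.
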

\begin{proof}
Let $ j \in \Z_{\geq 0} $ be such that $ \beta_j = \alpha_{i,r} $, hence $ \alpha = e\beta_j+f\beta_{j+1} $. Assume that $ \alpha $ satisfies condition (1) in \Cref{thm:D2}, i.e., $ v_j \leq e \leq 2v_j-1 $, $ 0 \leq f \leq v_{j+1}-2 $, and
\[
	(e, f) \neq (2v_j-1,v_{j+1}-2),\quad (v_{j-1},e) \neq (2,2v_j-1),\quad (v_{j-1},e,f) \neq (2,2v_j-2,v_{j+1}-2).
\]

Case $ r = 0 $: we have $ v_j = u_{i+1}+2 $. If $ u_{i+2} \geq 2 $, then $ v_{j+1} = 2 $. If $ u_{|i|} \geq 2 $, then $ v_{j-1} = 2 $. Condition (1) becomes $ u_{i+1}+2 \leq e \leq 2u_{i+1}+3 $, $ f = 0 $, and $ e \neq 2u_{i+1}+3 $, $ e \neq 2u_{i+1}+2 $, hence $ u_{i+1}+2 \leq e \leq 2u_{i+1}+1 $. If $ u_{|i|} = 1 $, then $ v_{j-1} = u_{|i-1|}+2 > 2 $. Condition (1) becomes $ u_{i+1}+2 \leq e \leq 2u_{i+1}+3 $, $ f = 0 $, and $ e \neq 2u_{i+1}+3 $, hence we get the additional possibility $ e = 2u_{i+1}+2 $. This gives us (a) and (b).

If $ u_{i+2} = 1 $, then $ v_{j+1} = u_{i+3}+2 $. We get $ u_{i+1}+2 \leq e \leq 2u_{i+1}+3 $, $ 0 \leq f \leq u_{i+3} $. If $ u_{|i|} \geq 2 $, then $ v_{j-1} = 2 $ and $ e \neq 2u_{i+1}+3 $, $ (e, f) \neq (2u_{i+1}+2, u_{i+3}) $. This gives us (c) and (d). If $ u_{|i|} = 1 $, then $ v_{j-1} = u_{|i-1|}+2 > 2 $, hence $ (e,f) \neq (2u_{i+1}+3,u_{i+3}) $. This gives us (e) and (f).

Case $ r = 1 $: we must have $ u_{i+2} \geq 2 $ and $ v_j = 2 $. Moreover, $ v_{j-1} = u_{i+1}+2 > 2 $. If $ u_{i+2} \geq 3 $, then $ v_{j+1} = 2 $, hence $ 2 \leq e \leq 3 $, $ f = 0 $, and $ (e,f) \neq (3,0) $. This gives us (g).

If $ u_{i+2} = 2 $, then $ v_{j+1} = u_{i+3}+2 $, hence $ 2 \leq e \leq 3 $, $ 0 \leq f \leq u_{i+3} $, and $ (e,f) \neq (3,u_{i+3}) $. This gives us (h) and (i).

Case $ 2 \leq r \leq u_{i+2}-2 $: we have $ v_{j-1} = 2 $, $ v_j = 2 $, and $ v_{j+1} = 2 $, hence $ 2 \leq e \leq 3 $, $ f = 0 $, and $ e \neq 3 $, $ (e,f) \neq (2,0) $. We see that these conditions are never satisfied.

Case $ r = u_{i+2}-1 $: because we have already dealt with the cases $ r = 0 $ and $ r = 1 $, we may assume $ u_{i+2} \geq 3 $. We have $ v_j = 2 $, $ v_{j+1} = u_{i+3}+2 $, and $ v_{j-1} = 2 $. We get $ 2 \leq e \leq 3 $, $ 0 \leq f \leq u_{i+3} $, and $ e \neq 3 $, $ (e, f) \neq (2,u_{i+3}) $. This gives us (j).
\end{proof}

\begin{lemma}
\label{lem:D2b2}
Let $ \alpha = e\alpha_{i,r}+f\alpha_{i,r+1} $, where $ i \geq -1 $ is odd and $ 0 \leq r \leq u_{i+2}-1 $. The element $ \alpha $ satisfies condition (2) in \Cref{thm:D2} if and only if one of the following holds:
\begin{enumerate}[(a)]
\item[(a)] $ r = 0 $, $ 1 \leq e \leq u_{i+1}-1 $, $ f = 2 $, $ u_{i+1} \geq 2 $, and $ u_{i+2} \geq 3 $,
\item[(b)] $ r = 0 $, $ 1 \leq e \leq u_{i+1}-1 $, $ 2 \leq f \leq 3 $, $ u_{i+1} \geq 2 $, and $ u_{i+2} = 2 $,
\item[(c)] $ r = 0 $, $ e = u_{i+1} $, $ f = 2 $, and $ u_{i+2} = 2 $,
\item[(d)] $ r = 0 $, $ 1 \leq e \leq u_{i+1}-1 $, $ u_{i+3}+2 \leq f \leq 2u_{i+3}+2 $, $ u_{i+1} \geq 2 $, $ u_{i+2} = 1 $, and $ u_{i+4} \geq 2 $,
\item[(e)] $ r = 0 $, $ e = u_{i+1} $, $ u_{i+3}+2 \leq f \leq 2u_{i+3}+1 $, $ u_{i+2} = 1 $, and $ u_{i+4} \geq 2 $,
\item[(f)] $ r = 0 $, $ 1 \leq e \leq u_{i+1}-1 $, $ u_{i+3}+2 \leq f \leq 2u_{i+3}+3 $, $ u_{i+1} \geq 2 $, $ u_{i+2} = 1 $, and $ u_{i+4} = 1 $,
\item[(g)] $ r = 0 $, $ e = u_{i+1} $, $ u_{i+3}+2 \leq f \leq 2u_{i+3}+2 $, $ u_{i+2} = 1 $, and $ u_{i+4} = 1 $.
\end{enumerate}
\end{lemma}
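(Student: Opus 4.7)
The plan is to mirror the casework in the proof of Lemma \ref{lem:D2b1} but applied to condition (2) of Theorem \ref{thm:D2}, which involves $v_j$, $v_{j+1}$, and $v_{j+2}$ in place of $v_{j-1}$, $v_j$, and $v_{j+1}$. I would let $j \in \Z_{\geq 0}$ satisfy $\beta_j = \alpha_{i,r}$ so that $\alpha = e\beta_j+f\beta_{j+1}$, and then translate the bounds $1 \leq e \leq v_j-2$ and $v_{j+1} \leq f \leq 2v_{j+1}-1$ and the three exclusions into inequalities on the continued fraction coefficients $u_\ell$ by consulting the tables for $v_{j-1}, v_j, v_{j+1}, v_{j+2}$ given just before Lemma \ref{lem:D2b1}. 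A crucial initial observation is that $e \geq 1$ together with $e \leq v_j-2$ forces $v_j \geq 3$, and from the table this happens precisely when $r = 0$, in which case $v_j = u_{i+1}+2$ and the constraint on $e$ becomes $1 \leq e \leq u_{i+1}$. Hence the entire analysis is confined to the case $r = 0$.

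Having fixed $r = 0$, I would split into two primary cases according to the value of $u_{i+2}$: either $u_{i+2} \geq 2$, giving $v_{j+1} = 2$ and hence the narrow range $2 \leq f \leq 3$; or $u_{i+2} = 1$, giving $v_{j+1} = u_{i+3}+2$ and the range $u_{i+3}+2 \leq f \leq 2u_{i+3}+3$. Within the first, one subdivides by whether $u_{i+2} \geq 3$ (so $v_{j+2} = 2$) or $u_{i+2} = 2$ (so $v_{j+2} = u_{i+3}+2 > 2$); within the second, since $r = u_{i+2}-1$, one subdivides by whether $u_{i+4} \geq 2$ (so $v_{j+2} = 2$) or $u_{i+4} = 1$ (so $v_{j+2} = u_{i+5}+2 > 2$). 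In each of the four resulting subcases I would translate the three exclusions, using the key simplification that the exclusions $(f,v_{j+2}) \neq (2v_{j+1}-1,2)$ and $(e,f,v_{j+2}) \neq (v_j-2,2v_{j+1}-2,2)$ are automatically satisfied whenever $v_{j+2} > 2$, so they contribute nontrivial restrictions only in the subcases $u_{i+2} \geq 3$ and $u_{i+2} = 1$, $u_{i+4} \geq 2$.

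Reading off the ranges of $e$ and $f$ after applying the relevant exclusions gives exactly items (a)--(g); for example, the subcase $u_{i+2} \geq 3$ collapses to $f = 2$ and $e \leq u_{i+1}-1$, yielding (a) (which is non-vacuous only when $u_{i+1} \geq 2$), while the subcase $u_{i+2} = u_{i+4} = 1$ splits into $e \leq u_{i+1}-1$ (giving (f)) and $e = u_{i+1}$ (giving (g)) depending on whether the single remaining exclusion $(e,f) \neq (u_{i+1},2u_{i+3}+3)$ is active. The work is essentially clerical rather than conceptual; the only points that require attention are separating $e = u_{i+1}$ from $e \leq u_{i+1}-1$ so that the exclusion involving $v_j-2$ lands on the correct sub-range, and noting the necessary hypothesis $u_{i+1} \geq 2$ wherever it is invoked. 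One could alternatively derive the lemma from Lemma \ref{lem:D2b1} by conjugation as in Lemma \ref{lem:D2suf2}, but the required reindexing of the continued fraction through the Galois action is less transparent than the direct enumeration.
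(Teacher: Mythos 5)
Your proposal is correct and follows essentially the same route as the paper: reduce to $r=0$ because $v_j=2$ for $1\leq r\leq u_{i+2}-1$ makes $1\leq e\leq v_j-2$ impossible, then translate $v_j,v_{j+1},v_{j+2}$ via the tables, splitting on $u_{i+2}\geq 3$, $u_{i+2}=2$, and $u_{i+2}=1$ with $u_{i+4}\geq 2$ or $u_{i+4}=1$, and applying the exclusions (noting the last two are vacuous when $v_{j+2}>2$). This is exactly the paper's case analysis, so nothing further is needed.
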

\begin{proof}
Let $ j \in \Z_{\geq 0} $ be such that $ \beta_j = \alpha_{i,r} $, hence $ \alpha = e\beta_j+f\beta_{j+1} $. Assume that $ \alpha $ satisfies condition (2) in \Cref{thm:D2}, i.e., $ 1 \leq e \leq v_j-2 $, $ v_{j+1} \leq f \leq 2v_{j+1}-1 $, and
\[
	(e,f) \neq (v_j-2,2v_{j+1}-1),\quad (f,v_{j+2}) \neq (2v_{j+1}-1,2),\quad (e,f,v_{j+2}) \neq (v_j-2,2v_{j+1}-2,2).
\]
Case $ r = 0 $: we have $ v_j = u_{i+1}+2 $. If $ u_{i+2} \geq 2 $, then $ v_{j+1} = 2 $, hence $ 1 \leq e \leq u_{i+1} $ and $ 2 \leq f \leq 3 $. If $ u_{i+2} \geq 3 $, then $ v_{j+2} = 2 $, hence $ f \neq 3 $ and $ (e,f) \neq (u_{i+1},2) $. This gives us (a). If $ u_{i+2} = 2 $, then $ v_{j+2} = u_{i+3}+2 > 2 $, hence $ (e, f) \neq (u_{i+1},3) $. This gives us (b) and (c).

If $ u_{i+2} = 1 $, then $ v_{j+1} = u_{i+3}+2 $, hence $ 1 \leq e \leq u_{i+1} $ and $ u_{i+3}+2 \leq f \leq 2u_{i+3}+3 $. If $ u_{i+4} \geq 2 $, then $ v_{j+2} = 2 $. We get $ f \neq 2u_{i+3}+3 $ and $ (e, f) \neq (u_{i+1},2u_{i+3}+2) $. This gives us (d) and (e). If $ u_{i+4} = 1 $, then $ v_{j+2} = u_{i+5}+2 > 2 $. We get $ (e, f) \neq (u_{i+1},2u_{i+3}+3) $. This gives us (f) and (g).

Case $ 1 \leq r \leq u_{i+2}-1 $: we get $ v_j = 2 $, hence $ 1 \leq e \leq 0 $, a contradiction.
\end{proof}

\begin{lemma}
\label{lem:D2b3}
Let $ \alpha = e\alpha_{i,r}+f\alpha_{i,r+1} $, where $ i \geq -1 $ is odd and $ 0 \leq r \leq u_{i+2}-1 $. The element $ \alpha $ satisfies condition (3) in \Cref{thm:D2} if and only if one of the following holds:
\begin{enumerate}[(a)]
\item[(a)] $ r = 0 $, $ e = u_{i+1}+1 $, $ f = 1 $, and $ u_{i+2} \geq 2 $,
\item[(b)] $ r = 0 $, $ e = u_{i+1}+1 $, $ f = u_{i+3}+1 $, and $ u_{i+2} = 1 $,
\item[(c)] $ r = 1 $, $ e = 1 $, $ f = 1 $, and $ u_{i+2} \geq 3 $,
\item[(d)] $ r = 1 $, $ e = 1 $, $ f = u_{i+3}+1 $, and $ u_{i+2} = 2 $,
\item[(e)] $ r = u_{i+2}-2 $, $ e = 1 $, $ f = 1 $, and $ u_{i+2} \geq 4 $,
\item[(f)] $ r = u_{i+2}-1 $, $ e = 1 $, $ f = u_{i+3}+1 $, and $ u_{i+2} \geq 3 $.
\end{enumerate}
\end{lemma}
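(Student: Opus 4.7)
The plan is to perform a careful case analysis on $ r $, applying the explicit formulas for $ v_{j-1}, v_j, v_{j+1}, v_{j+2} $ (in terms of $ r $, $ u_{i+1} $, $ u_{i+3} $, and occasionally $ u_{|i|} $) established in the paragraph preceding \Cref{lem:D2b1}, where $ j \in \Z $ is chosen so that $ \beta_j = \alpha_{i,r} $. Condition (3) in \Cref{thm:D2} forces $ e = v_j-1 $ and $ f = v_{j+1}-1 $, so the values of $ e $ and $ f $ are pinned down as soon as $ r $ (and, when relevant, $ u_{i+2} $) is fixed.

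The observation that keeps the case analysis short is that the non-degeneracy constraint $ (v_{j-1},v_j,v_{j+1},v_{j+2}) \neq (2,2,2,2) $ is automatic whenever $ r $ is close to an endpoint of $ \{0,1,\dots,u_{i+2}-1\} $. More precisely: if $ r = 0 $ then $ v_j = u_{i+1}+2 \geq 3 $; if $ r = 1 $ then $ v_{j-1} = u_{i+1}+2 \geq 3 $; if $ r = u_{i+2}-1 $ then $ v_{j+1} = u_{i+3}+2 \geq 3 $; and if $ r = u_{i+2}-2 $ with $ r \geq 2 $ then $ v_{j+2} = u_{i+3}+2 \geq 3 $. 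In the complementary range $ 2 \leq r \leq u_{i+2}-3 $, all four of the $ v $'s collapse to $ 2 $, so condition (3) fails outright. This immediately excludes the ``interior'' values of $ r $ and leaves only the six situations listed as (a)--(f).

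Concretely, I would handle $ r = 0 $ first, splitting on whether $ u_{i+2} \geq 2 $ (which gives $ v_{j+1} = 2 $, hence $ e = u_{i+1}+1 $, $ f = 1 $, yielding (a)) or $ u_{i+2} = 1 $ (which gives $ v_{j+1} = u_{i+3}+2 $, hence $ e = u_{i+1}+1 $, $ f = u_{i+3}+1 $, yielding (b)). Next I would treat $ r \geq 1 $; here $ v_j = 2 $, so $ e = 1 $, and I would split into the sub-cases $ r = 1 $ with $ u_{i+2} \geq 3 $ (giving (c)), $ r = 1 $ with $ u_{i+2} = 2 $ (which coincides with $ r = u_{i+2}-1 $, giving (d)), $ r = u_{i+2}-2 $ with $ r \geq 2 $ (forcing $ u_{i+2} \geq 4 $ and giving (e)), and $ r = u_{i+2}-1 $ with $ r \geq 2 $ (forcing $ u_{i+2} \geq 3 $ and giving (f)). The converse direction is immediate: for each of (a)--(f) one reads off that $ e = v_j-1 $, $ f = v_{j+1}-1 $, and at least one of the four $ v $'s equals $ u_{i+1}+2 $ or $ u_{i+3}+2 $, so the tuple is not $ (2,2,2,2) $.

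The only real difficulty is bookkeeping: when $ u_{i+2} $ is small (equal to $ 2 $ or $ 3 $), several of the boundary conditions $ r = 1 $, $ r = u_{i+2}-2 $, and $ r = u_{i+2}-1 $ coincide, and I would have to check that no redundant or missing case arises. Aside from this, no new idea is needed beyond what is already present in the proofs of \Cref{lem:D2b1} and \Cref{lem:D2b2}; the argument is a purely mechanical translation from the $ v_k $-language of \Cref{thm:D2} to the $ u_i $-language.
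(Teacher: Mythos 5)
Your proposal is correct and follows essentially the same route as the paper: a case analysis on $r$ (endpoints $r=0$, $r=1$, $r=u_{i+2}-2$, $r=u_{i+2}-1$ versus the interior range $2 \leq r \leq u_{i+2}-3$, where all four $v$'s equal $2$ and condition (3) fails), using the translation table for $v_{j-1},v_j,v_{j+1},v_{j+2}$ in terms of the $u_i$'s to read off $e=v_j-1$, $f=v_{j+1}-1$. Your observation that the non-degeneracy condition is automatic at the boundary values of $r$ is exactly what the paper uses implicitly (e.g.\ $v_{j-1}=u_{i+1}+2>2$ in the case $r=1$), so no new content or gap is involved.
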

\begin{proof}
Let $ j \in \Z $ be such that $ \beta_j = \alpha_{i,r} $, hence $ \alpha = e\beta_j+f\beta_{j+1} $. Assume that $ \alpha $ satisfies condition (3) in \Cref{thm:D2}, i.e.,
\[
	e = v_j-1,\quad f = v_{j+1}-1,\quad\text{and}\quad(v_{j-1},v_j,v_{j+1},v_{j+2}) \neq (2,2,2,2).
\]

Case $ r = 0 $: we have $ v_j = u_{i+2}+2 $. If $ u_{i+2} \geq 2 $, then $ v_{j+1} = 2 $, which gives us (a). If $ u_{i+2} = 1 $, then $ v_{j+1} = u_{i+3}+2 $, which gives us (b).

Case $ r = 1 $: we must have $ u_{i+2} \geq 2 $ and $ v_j = 2 $. If $ u_{i+2} \geq 3 $, then $ v_{j+1} = 2 $. Moreover, $ v_{j-1} = u_{i+1}+2 > 2 $. This gives us (c). If $ u_{i+2} = 2 $, then $ v_{j+1} = u_{i+3}+2 $, and this gives us (d).

Case $ 2 \leq r \leq u_{i+2}-3 $: we have $ v_j = 2 $, $ v_{j+1} = 2 $, $ v_{j-1} = 2 $, and $ v_{j+1} = 2 $, a contradiction.

Case $ r = u_{i+2}-2 $: since we have already treated the cases $ r = 0 $ and $ r = 1 $, we may assume $ u_{i+2} \geq 4 $. We have $ v_j = 2 $, $ v_{j+1} = 2 $, and $ v_{j+2} = u_{i+3}+2 $, giving us (e).

Case $ r = u_{i+2}-1 $: we may assume $ u_{i+2} \geq 3 $. Now $ v_j = 2 $ and $ v_{j+1} = u_{i+3}+2 $, which gives us (f).
\end{proof}

\begin{theorem}
\label{thm:D2b}
All the elements $ \alpha \in \O_K^+ $ such that $ p_K(\alpha|\ind) = 2 $ are the following (where $ i \geq -1 $ is odd):
\begin{itemize}
\item $ \alpha = e\alpha_{i,0}+f\alpha_{i,1} $ with
\begin{enumerate}[(a)]
\item[(a)] $ u_{i+1}+2 \leq e \leq 2u_{i+1}+1 $ and $ f = 0 $ if $ u_{i+2} \geq 2 $,
\item[(b)] $ e = 2u_{i+1}+2 $ and $ f = 0 $ if $ u_{|i|} = 1 $ and $ u_{i+2} \geq 2 $,
\item[(c)] $ u_{i+2}+2 \leq e \leq 2u_{i+1}+1 $ and $ 0 \leq f \leq u_{i+3} $ if $ u_{|i|}\geq 2 $ and $ u_{i+2} = 1 $,
\item[(d)] $ e = 2u_{i+2}+2 $ and $ 0 \leq f \leq u_{i+3}-1 $ if $ u_{|i|} \geq 2 $ and $ u_{i+2} = 1 $,
\item[(e)] $ u_{i+1}+2 \leq e \leq 2u_{i+1}+2 $ and $ 0 \leq f \leq u_{i+3} $ if $ u_{|i|} = 1 $ and $ u_{i+2} = 1 $,
\item[(f)] $ e = 2u_{i+1}+3 $ and $ 0 \leq f \leq u_{i+3}-1 $ if $ u_{|i|} = 1 $ and $ u_{i+2} = 1 $,
\item[(g)] $ 1 \leq e \leq u_{i+1}-1 $ and $ f = 2 $ if $ u_{i+1} \geq 2 $ and $ u_{i+2} \geq 3 $,
\item[(h)] $ 1 \leq e \leq u_{i+1}-1 $ and $ 2 \leq f \leq 3 $ if $ u_{i+1} \geq 2 $ and $ u_{i+2} = 2 $,
\item[(i)] $ e = u_{i+1} $ and $ f = 2 $ if $ u_{i+2} = 2 $,
\item[(j)] $ 1 \leq e \leq u_{i+1}-1 $ and $ u_{i+3}+2 \leq f \leq 2u_{i+2}+2 $ if $ u_{i+1} \geq 2 $, $ u_{i+2} = 1 $, and $ u_{i+4} \geq 2 $,
\item[(k)] $ e = u_{i+1} $ and $ u_{i+3}+2 \leq f \leq 2u_{i+3}+1 $ if $ u_{i+2} = 1 $ and $ u_{i+4} \geq 2 $,
\item[(l)] $ 1 \leq e \leq u_{i+1}-1 $ and $ u_{i+3}+2 \leq f \leq 2u_{i+3}+3 $ if $ u_{i+1} \geq 2 $, $ u_{i+2} = 1 $, and $ u_{i+4} = 1 $,
\item[(m)] $ e = u_{i+1} $ and $ u_{i+3}+2 \leq f \leq 2u_{i+3}+2 $ if $ u_{i+2} = 1 $ and $ u_{i+4} = 1 $,
\item[(n)] $ e = u_{i+1}+1 $ and $ f = 1 $ if $ u_{i+2} \geq 2 $,
\item[(o)] $ e = u_{i+1}+1 $ and $ f = u_{i+3}+1 $ if $ u_{i+2} = 1 $.
\end{enumerate}
\item $ \alpha = e\alpha_{i,1}+f\alpha_{i,2} $ with
\begin{enumerate}[(a)]
\setcounter{enumi}{15}
\item[(p)] $ e = 2 $ and $ f = 0 $ if $ u_{i+2} \geq 3 $,
\item[(q)] $ e = 2 $ and $ 0 \leq f \leq u_{i+3} $ if $ u_{i+2} = 2 $,
\item[(r)] $ e = 3 $ and $ 0 \leq f \leq u_{i+3}-1 $ if $ u_{i+2} = 2 $,
\item[(s)] $ e = 1 $ and $ f = 1 $ if $ u_{i+2} \geq 3 $,
\item[(t)] $ e = 1 $ and $ f = u_{i+3}+1 $ if $ u_{i+2} = 2 $,
\end{enumerate}
\item $ \alpha = e\alpha_{i, u_{i+2}-2}+f\alpha_{i, u_{i+2}-1} $ with $ e = 1 $ and $ f = 1 $ if $ u_{i+2} \geq 4 $,
\item $ \alpha = e\alpha_{i, u_{i+2}-1}+f\alpha_{i+2,0} $ with
\begin{enumerate}[(a)]
\setcounter{enumi}{20}
\item[(u)] $ e = 2 $ and $ 0 \leq f \leq u_{i+3}-1 $ if $ u_{i+2} \geq 3 $,
\item[(v)] $ e = 1 $ and $ f = u_{i+3}+1 $ if $ u_{i+2} \geq 3 $,
\end{enumerate}
\item conjugates of all of the above.
\end{itemize}
\end{theorem}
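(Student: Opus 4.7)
The statement is a compilation: it re-expresses the characterization of \Cref{thm:D2} (phrased in terms of $v_{j-1},v_j,v_{j+1},v_{j+2}$) as an explicit list of conditions on the partial quotients $u_i$ of $\omega_D$. My plan is to reduce directly to \Cref{lem:D2b1}, \Cref{lem:D2b2}, and \Cref{lem:D2b3}, which already translate conditions (1), (2), and (3) of \Cref{thm:D2} respectively, and then to reorganise the output by the value of $r$.

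First, I would use \Cref{thm:HK3} together with the Galois-invariance of $p_K(\cdot|\ind)$ to write $\alpha = e\alpha_{i,r}+f\alpha_{i,r+1}$ for some odd $i\geq -1$ and $0\leq r\leq u_{i+2}-1$, possibly after replacing $\alpha$ by $\alpha'$; the trailing ``conjugates'' clause in the statement absorbs this reduction. By \Cref{thm:D2}, $p_K(\alpha|\ind)=2$ is equivalent to one of (1), (2), (3), and by the three lemmas these are in turn equivalent to the explicit sub-lists (a)--(j), (a)--(g), (a)--(f). The preamble before \Cref{lem:D2b1} shows that the quantities $v_{j-1},v_j,v_{j+1},v_{j+2}$ depend only on the position of $r$ in $[0,u_{i+2}-1]$, and that for $2\leq r\leq u_{i+2}-3$ all four equal $2$, in which case none of (1)--(3) can hold; hence only $r\in\{0,1,u_{i+2}-2,u_{i+2}-1\}$ contribute.

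I would then merge the three lemmas by value of $r$. For $r=0$, items (a)--(f) of \Cref{lem:D2b1}, (a)--(g) of \Cref{lem:D2b2}, and (a)--(b) of \Cref{lem:D2b3} combine to produce the fifteen subcases (a)--(o) of the form $\alpha=e\alpha_{i,0}+f\alpha_{i,1}$. For $r=1$, items (g)--(i) of \Cref{lem:D2b1} together with (c)--(d) of \Cref{lem:D2b3} give the five subcases (p)--(t) of the form $\alpha=e\alpha_{i,1}+f\alpha_{i,2}$. Item (e) of \Cref{lem:D2b3} contributes the sole case with $r=u_{i+2}-2$, and item (j) of \Cref{lem:D2b1} together with (f) of \Cref{lem:D2b3} contributes the two subcases with $r=u_{i+2}-1$. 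The one genuine obstacle is bookkeeping: when $u_{i+2}$ or $u_{i+4}$ equals $1$, several ``endpoint'' subcases coalesce and the formulas for $v_{j-1}$ and $v_{j+2}$ bifurcate depending on whether the neighbouring partial quotients equal $1$, which accounts for the profusion of clauses. Finally, one should verify that no element appears twice in the list, which follows from the uniqueness of the representation $\alpha=e\beta_j+f\beta_{j+1}$ in \Cref{thm:HK3}.
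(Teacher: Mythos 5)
Your proposal matches the paper's proof: Theorem~\ref{thm:D2b} is obtained precisely by combining Theorem~\ref{thm:D2} with the translations in Lemmas~\ref{lem:D2b1}, \ref{lem:D2b2}, and \ref{lem:D2b3}, reorganised by the value of $r$ (with the conjugate reduction via Theorem~\ref{thm:HK3} absorbed in the final clause). Your bookkeeping of which lemma items produce which subcases (a)--(o), (p)--(t), the $r=u_{i+2}-2$ case, and (u)--(v) is exactly the merging the paper performs.
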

\begin{proof}
This follows from \Cref{thm:D2} by putting together the conditions in \Cref{lem:D2b1,lem:D2b2,lem:D2b3}.
\end{proof}

\begin{theorem}
\label{thm:N2}
If $ \alpha \in \O_K^+ $ can be expressed as a sum of indecomposables in $2$ ways, i.e., $ p_K(\alpha|\ind) = 2 $, then
\[
	\Nm(\alpha) < 5\sqrt{\Delta}\left(\sqrt{\Delta}+1\right)\left(3\sqrt{\Delta}+2\right),
\]
where $ \Delta $ is the discriminant of $ K $.
\end{theorem}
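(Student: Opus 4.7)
The plan is to combine the explicit enumeration of elements with $p_K(\alpha|\ind)=2$ given in \Cref{thm:D2b} with the two norm estimates \Cref{lem:Na3} and \Cref{lem:Na4}, and take the worst case over all sub-cases. In every case listed, $\alpha = e\alpha_{i,r}+f\alpha_{i,r+1}$ for odd $i \geq -1$ and some $r \in \{0, 1, u_{i+2}-2, u_{i+2}-1\}$, so either $r$ or $u_{i+2}-r$ lies in $\{1,2\}$. The coefficients $e$ and $f$ are in each case bounded by small linear expressions in $u_{i+1}$ and $u_{i+3}$, and throughout we use the strict bound $u_{i+1}, u_{i+3} \leq u_0 < \sqrt{\Delta}$ (strict because $\sqrt{\Delta}$ is irrational).

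For the cases with $r \in \{0,1\}$, I apply \Cref{lem:Na3} in the form $\Nm(\alpha) < \sqrt{\Delta}\bigl((r+1)e+(r+2)f\bigr)(e+f)$; for the cases with $u_{i+2}-r \in \{1,2\}$, I apply \Cref{lem:Na4}, which is the symmetric statement with $u_{i+2}-r$ playing the role of $r+1$. In either situation, plugging in the admissible ranges for $e$ and $f$ reduces the case to a short arithmetic check. The extremal case is case (l) of \Cref{thm:D2b}: $r=0$, $1 \leq e \leq u_{i+1}-1$, $u_{i+3}+2 \leq f \leq 2u_{i+3}+3$. Using $u_{i+1}, u_{i+3} < \sqrt{\Delta}$, \Cref{lem:Na3} yields
\[
	\Nm(\alpha) < \sqrt{\Delta}(e+2f)(e+f) < \sqrt{\Delta}\bigl(5\sqrt{\Delta}+5\bigr)\bigl(3\sqrt{\Delta}+2\bigr) = 5\sqrt{\Delta}\bigl(\sqrt{\Delta}+1\bigr)\bigl(3\sqrt{\Delta}+2\bigr),
\]
matching the claimed constant exactly. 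Every other case produces a strictly smaller bound.

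The main obstacle is not any single inequality but the sheer number of sub-cases in \Cref{thm:D2b}, of which there are more than twenty. To streamline the bookkeeping I would (i) eliminate all conjugate cases immediately using $\Nm(\alpha') = \Nm(\alpha)$; (ii) group cases by the pair $(r, u_{i+2}-r)$ so that a single norm lemma is applied uniformly within each group; and (iii) exploit a monotonicity observation: within each group the cases with $u_{i+2}=1$ allow the largest admissible values of $f$ (up to $2u_{i+3}+3$), so the cases with $u_{i+2}\geq 2$ follow a fortiori once the former are handled. After these reductions only a handful of genuinely distinct computations remain, each analogous to the extremal one above, and each strictly weaker than the target bound.
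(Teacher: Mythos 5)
Your proposal is correct and follows essentially the same route as the paper: it runs through the case list of \Cref{thm:D2b}, applies \Cref{lem:Na3} for $r \in \{0,1\}$ and \Cref{lem:Na4} for $u_{i+2}-r \in \{1,2\}$ with $u_{i+1}, u_{i+3} < \sqrt{\Delta}$, and identifies case (l) as the extremal one giving exactly $5\sqrt{\Delta}(\sqrt{\Delta}+1)(3\sqrt{\Delta}+2)$, just as the paper does. (Only a cosmetic slip: early on you write ``$r$ or $u_{i+2}-r$ lies in $\{1,2\}$'' where you mean $r \in \{0,1\}$ or $u_{i+2}-r \in \{1,2\}$, which you state correctly afterwards.)
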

\begin{proof}
We need to estimate the norms of the elements $ \alpha = e\alpha_{i,r}+f\alpha_{i,r+1} $ in \Cref{thm:D2b}.

First, suppose that $ r = 0 $. By \Cref{lem:Na3}, we have
\[
	\Nm(\alpha) < \sqrt{\Delta}(e+2f)(e+f).
\]
Now we substitute the bounds for $ e $ and $ f $ from cases (a)--(o) in \Cref{thm:D2b}. For example, in (a) we have $ e \leq 2u_{i+1}+1 < 2\sqrt{\Delta}+1 $ and $ f = 0 $, hence
\[
	\Nm(\alpha) < \sqrt{\Delta}\left(2\sqrt{\Delta}+1\right)^2.
\]
The worst case is (l), where $ e \leq u_{i+1}-1 < \sqrt{\Delta}-1 $ and $ f \leq 2u_{i+3}+3 < 2\sqrt{\Delta}+3 $, hence
\[
	\Nm(\alpha) < \sqrt{\Delta}\left(5\sqrt{\Delta}+5\right)\left(3\sqrt{\Delta}+2\right).
\]

Secondly, suppose that $ r = 1 $. By \Cref{lem:Na3}, we have
\[
	\Nm(\alpha) < \sqrt{\Delta}(2e+3f)(e+f).
\]
We analyze the cases (p)--(t) in \Cref{thm:D2b}. The worst case is (t), where $ e = 1 $ and $ f = u_{i+3}+1 < \sqrt{\Delta}+1 $, hence
\[
	\Nm(\alpha) < \sqrt{\Delta}\left(3\sqrt{\Delta}+5\right)\left(\sqrt{\Delta}+2\right).
\]

Next, suppose that $ r = u_{i+2}-2 $. By \Cref{lem:Na4}, we have
\[
	\Nm(\alpha) < \sqrt{\Delta}(4e+3f)(3e+2f).
\]
From \Cref{thm:D2b}, we get $ e = 1 $ and $ f = 1 $, hence $ \Nm(\alpha) < 35\sqrt{\Delta} $.

Finally, suppose that $ r = u_{i+2}-1 $. By \Cref{lem:Na4}, we have
\[
	\Nm(\alpha) < \sqrt{\Delta}(3e+2f)(2e+f).
\]
We look at the cases (u) and (v) in \Cref{thm:D2b}. In (u), we have $ e = 2 $ and $ f \leq u_{i+3}-1 < \sqrt{\Delta}-1 $, hence
\[
	\Nm(\alpha) < \sqrt{\Delta}\left(2\sqrt{\Delta}+4\right)\left(\sqrt{\Delta}+3\right).
\]
In (v), we have $ e = 1 $ and $ f = u_{i+3}+1 < \sqrt{\Delta}+1 $, hence
\[
	\Nm(\alpha) < \sqrt{\Delta}\left(2\sqrt{\Delta}+5\right)\left(\sqrt{\Delta}+3\right).
\]
This proves the bound for $ \Nm(\alpha) $ in each case.
\end{proof}

\section{Elements with a small number of partitions}
\label{sec:PK}

In this section, we use our results about partitions with indecomposable parts to describe all the elements with $ 6 $ partitions and determine $ \calD(6) $. A sufficient condition for the existence of $ \alpha \in \O_K^+ $ with $ 6 $ partitions was found in \cite{SZ}, where it was also remarked that this condition is not necessary.

\begin{theorem}[{\cite[Theorem 12]{SZ}}]
\label{thm:P6Suf}
Let $ K = \Q(\sqrt{D}) $, where $ D \in \Z_{\geq 2} $ is squarefree, $ D \neq 5 $, and let $ \alpha = \left(\lceil 2\xi_D \rceil+2\right)+2\omega_D $.
\begin{itemize}
\item If $ \lceil \xi_D \rceil-\xi_D > \frac{1}{2} $, then $ p_K(\alpha) = 6 $,
\item If $ \lceil \xi_D \rceil-\xi_D < \frac{1}{2} $, then $ p_K(\alpha) = 9 $.
\end{itemize}
\end{theorem}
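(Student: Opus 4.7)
The plan is to pin down $\alpha$ explicitly and then enumerate all of its partitions by listing every totally positive integer $\preceq\alpha$ and taking multisets. First, I would show that $\alpha = 2\alpha_0 + c$, where $c = 3$ if $\lceil\xi_D\rceil - \xi_D > 1/2$ and $c = 4$ if $\lceil\xi_D\rceil - \xi_D < 1/2$. This is a direct calculation handled separately for $D \equiv 2, 3 \pmod{4}$ and $D \equiv 1 \pmod{4}$, using the explicit form $\alpha_0 = \lceil u_0/2\rceil + \xi_D$; equivalently, the dichotomy in $c$ reflects whether $\alpha_0'$ lies in $(-1/2, 0)$ or in $(-1, -1/2)$, both being subintervals of $(-1, 0)$ in which $\alpha_0'$ always lies.

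Second, I would enumerate the totally positive integers $\lambda \preceq \alpha$. Writing $\lambda = a + b\xi_D$ in the integral basis and imposing both $\lambda \succ 0$ and $\alpha - \lambda \succeq 0$ produces sharp bounds on $a$ for each fixed $b$, exploiting that $\alpha$ has $\xi_D$-coefficient only $2$. For $D \neq 5$ the outcome is that only $b \in \{0, 1, 2\}$ yields any such $\lambda$, giving the list $\{1, 2, \alpha_0+1, \alpha_0+2, 2\alpha_0+1, 2\alpha_0+2, \alpha\}$ of $7$ elements when $c = 3$ and $\{1, 2, \alpha_0+1, \alpha_0+2, \alpha_0+3, 2\alpha_0+2, 2\alpha_0+3, \alpha\}$ of $8$ elements when $c = 4$. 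A direct combinatorial enumeration of multisets of these parts summing to $\alpha$ then yields exactly $6$ partitions in the first case and $9$ in the second; for instance, when $c = 3$ the six partitions are $(\alpha)$, $(2\alpha_0+2, 1)$, $(2\alpha_0+1, 2)$, $(2\alpha_0+1, 1, 1)$, $(\alpha_0+2, \alpha_0+1)$, and $(\alpha_0+1, \alpha_0+1, 1)$.

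The main obstacle is proving completeness of the TP list in the second step, and the hypothesis $D \neq 5$ is essential precisely there. Only in $\Q(\sqrt{5})$ does one have $\omega_D = (1+\sqrt{5})/2 < 2$, so the exotic element $2 - \omega_D$ is totally positive; moreover $\alpha - (2 - \omega_D) = 2 + 3\omega_D$ is also totally positive, producing a tenth partition of $\alpha = 5 + \sqrt{5}$ and breaking the count. For all other $D \equiv 1 \pmod{4}$ we have $\omega_D > 2$, and for $D \equiv 2, 3 \pmod{4}$ the analogous candidates $k - \sqrt{D}$ (with $k$ integer) are ruled out because either $k \leq \sqrt{D}$ or $\alpha - (k - \sqrt{D})$ fails to be totally positive. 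In all cases $D \neq 5$, the list closes and the enumeration is complete.
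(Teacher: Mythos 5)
Your proposal is correct, but note that this paper does not actually prove \Cref{thm:P6Suf} at all -- it is imported verbatim from \cite[Theorem 12]{SZ} -- so there is no internal proof to match; the closest the paper comes is that your first case could be read off from its later machinery, since for $u_1\geq 2$ (equivalently $\lceil\xi_D\rceil-\xi_D>\tfrac12$) and $u_0\geq 2$ (equivalently $D\neq 5$) one has $\alpha=\alpha_{-1,0}+2\alpha_{-1,1}=\beta_0+2\beta_1$, and \Cref{lem:P12} (or item (h) of \Cref{prop:P6} with $i=-1$) gives $p_K(\alpha)=6$; the $9$-partition case is not covered by the paper's lemmas, which only show $p_K(2\beta_0+2\beta_1)\geq 7$ via \Cref{lem:PK6}. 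Your direct route checks out: writing $\alpha_0=\lfloor\omega_D\rfloor+\xi_D$ and $\theta=\omega_D-\lfloor\omega_D\rfloor=-\alpha_0'$, one indeed gets $\alpha=2\alpha_0+3$ when $\theta<\tfrac12$ and $\alpha=2\alpha_0+4$ when $\theta>\tfrac12$, and writing a candidate part as $x+y\alpha_0$ the conditions $\lambda\succ0$, $\alpha-\lambda\succeq0$ force $y\in\{0,1,2\}$ once $D\neq5$ (for $y\leq-1$ one needs $x>|y|\alpha_0>2$ yet $x<c-(2+|y|)\theta\leq 3$, and for $y\geq3$ one needs $x\leq c-3$ yet $x>y\theta$, both impossible), yielding exactly your two ground sets and, by the multiset count, exactly $6$ and $9$ partitions; the role of $D=5$ ($\alpha_0=\omega_5<2$, extra parts $2-\omega_5$ and $2+3\omega_5$) is as you describe. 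The only soft spots are presentational: the completeness of the part list is the whole proof and is only sketched ("sharp bounds on $a$ for each fixed $b$"), so in a final write-up you should display the inequalities for general $y$ (including $y\geq3$ and $y\leq-1$) rather than only the $k-\sqrt{D}$ candidates, and you should note explicitly that every part of a partition of $\alpha$ satisfies $\alpha-\lambda\succeq0$, which is what justifies enumerating multisets from that ground set. Compared with the paper's $\beta_j$/$v_j$ formalism, your argument is more elementary and self-contained, at the cost of being special to this particular $\alpha$ rather than part of a general classification.
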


Since $ \xi_D = \omega_D $ if $ D \equiv 2,3 \pmod{4} $ and $ \xi_D = \omega_D-1 $ if $ D \equiv 1 \pmod{4} $, we have that $ \lceil\xi_D\rceil-\xi_D > \frac{1}{2} $ is equivalent to $ \omega_D-\lfloor\omega_D\rfloor < \frac{1}{2} $. From the continued fraction expansion
\[
	\omega_D-\lfloor\omega_D\rfloor = \frac{1}{u_1+\frac{1}{u_2+\cdots}},
\]
we see that this is equivalent to $ u_1 \geq 2 $. We also note that $ D \neq 5 $ is equivalent to $ u_0 = 2\lfloor\omega_D\rfloor-\Tr(\omega_D) \geq 2 $. Thus, if $ u_0 \geq 2 $ and $ u_1 \geq 2 $, then the element $ \alpha $ from \Cref{thm:P6Suf} satisfies $ p_K(\alpha) = 6 $.

The set $ \calD(6) $ will be completely determined in terms of the continued fraction of $ \omega_D $ in \Cref{thm:E6}.

\begin{lemma}
\label{lem:PK6}
Let $ \alpha \in \O_K^{+} $ and $ j \in \Z $, $ e \in \Z_{\geq 1} $, $ f \in \Z_{\geq 0} $ such that $ \alpha = e\beta_j+f\beta_{j+1} $. If $ p_K(\alpha) \leq 6 $, then $ (e,f) \in \{(1,0),(2,0),(3,0),(4,0),(1,1),(2,1),(1,2)\} $.
\end{lemma}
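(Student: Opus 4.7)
The plan is to exhibit many partitions of $ \alpha $ by using \emph{vector partitions} of $ (e, f) $. Given any decomposition $ (e, f) = \sum_{k=1}^{\ell}(a_k, b_k) $ with $ (a_k, b_k) \in \Z_{\geq 0}^2\setminus\{(0,0)\} $, each summand $ a_k\beta_j+b_k\beta_{j+1} $ is totally positive (as a nontrivial non-negative integer combination of the totally positive indecomposables $ \beta_j $ and $ \beta_{j+1} $), and so this produces a partition of $ \alpha $. Because $ \beta_j $ and $ \beta_{j+1} $ are $ \Z $-linearly independent---which is immediate from the uniqueness clause of \Cref{thm:HK3}---distinct vector partitions of $ (e, f) $ yield distinct partitions of $ \alpha $. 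Writing $ P(e, f) $ for the number of vector partitions of $ (e, f) $, I obtain the lower bound $ p_K(\alpha) \geq P(e, f) $.

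It thus suffices to verify $ P(e, f) \geq 7 $ for every $ (e, f) \in \Z_{\geq 1}\times\Z_{\geq 0} $ outside the listed set, since then $ p_K(\alpha) \leq 6 $ forces $ (e, f) $ into the list. The function $ P $ is non-decreasing in each coordinate (pad any partition with extra $ (1, 0) $'s or $ (0, 1) $'s), and the complement of the listed tuples in $ \Z_{\geq 1}\times\Z_{\geq 0} $ is the union of the upward sets generated by the four minimal elements $ (5, 0) $, $ (3, 1) $, $ (2, 2) $, and $ (1, 3) $. So it is enough to check these: $ P(5, 0) = p(5) = 7 $; classifying vector partitions of $ (e, 1) $ by the unique part with nonzero second coordinate gives $ P(e, 1) = \sum_{k=0}^{e} p(k) $, so $ P(3, 1) = 1+1+2+3 = 7 $, and symmetrically $ P(1, 3) = 7 $; and a direct enumeration gives $ P(2, 2) = 9 $.

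The only nontrivial ingredient is the $ \Z $-linear independence of $ \beta_j $ and $ \beta_{j+1} $, without which distinct vector partitions of $ (e, f) $ could in principle collapse to the same partition of $ \alpha $. However, this is a direct consequence of \Cref{thm:HK3}, so I do not expect any real obstacle; the argument ultimately reduces to a short combinatorial case check on very small vector partitions.
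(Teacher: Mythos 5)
Your proposal is correct and takes essentially the same approach as the paper: the paper likewise reduces the complement to the four minimal cases $5\beta_j$, $3\beta_j+\beta_{j+1}$, $\beta_j+3\beta_{j+1}$, $2\beta_j+2\beta_{j+1}$ and exhibits seven partitions of each whose parts are non-negative combinations of $\beta_j$ and $\beta_{j+1}$ (precisely your vector partitions of $(5,0)$, $(3,1)$, $(1,3)$, $(2,2)$), invoking monotonicity of $p_K$ under $\succeq$ where you use the padding argument for $P(e,f)$.
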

\begin{proof}
We show that if $ \alpha \succeq 5\beta_j $ or $ \alpha \succeq 3\beta_j+\beta_{j+1} $ or  $ \alpha \succeq \beta_j+3\beta_{j+1} $ or $ \alpha \succeq 2\beta_j+2\beta_{j+1} $, then $ p_K(\alpha) \geq 7 $.

The element $ 5\beta_j $ has at least $7$ partitions corresponding to the $7$ partitions of $5$, namely
\begin{gather*}
	(5\beta_j),\ (4\beta_j, \beta_j),\ (3\beta_j, 2\beta_j),\ (3\beta_j, \beta_j, \beta_j),\ (2\beta_j, 2\beta_j, \beta_j),\\
	(2\beta_j, \beta_j, \beta_j, \beta_j),\ \text{and}\ (\beta_j, \beta_j, \beta_j, \beta_j, \beta_j).
\end{gather*}
The element $ 3\beta_j+\beta_{j+1} $ has at least the following $7$ partitions:
\begin{gather*}
	(3\beta_j+\beta_{j+1}),\ (3\beta_j, \beta_{j+1}),\ (2\beta_j, \beta_j+\beta_{j+1}),\ (\beta_j, 2\beta_j+\beta_{j+1}),\ (2\beta_j, \beta_j, \beta_{j+1}),\\
	(\beta_j, \beta_j, \beta_j+\beta_{j+1}),\ \text{and}\ (\beta_j, \beta_j, \beta_j, \beta_{j+1}).
\end{gather*}
Similarly, $ \beta_j+3\beta_{j+1} $ has at least $7$ partitions obtained from the partitions of $ 3\beta_j+\beta_{j+1} $ by exchanging the roles of $ \beta_j $ and $ \beta_{j+1} $.

The element $ 2\beta_j+2\beta_{j+1} $ also has at least $7$ partitions:
\begin{gather*}
	(2\beta_j+2\beta_{j+1}),\ (2\beta_j+\beta_{j+1},\beta_{j+1}),\ (2\beta_j,2\beta_{j+1}),\ (\beta_j, \beta_j+2\beta_{j+1}),\ (2\beta_j, \beta_{j+1}, \beta_{j+1}),\\
	(\beta_j, \beta_j, 2\beta_{j+1})\ \text{and}\ (\beta_j, \beta_j, \beta_{j+1}, \beta_{j+1}).\qedhere
\end{gather*}
\end{proof}

\begin{lemma}
\label{lem:P20}
Let $ i \geq -1 $ be odd and $ 0 \leq r \leq u_{i+2}-1 $. For $ \alpha = 2\alpha_{i,r} $, we have $ p_K(\alpha) = m $ if and only if either $ r = m-2 $ and $ u_{i+2} \geq 2m-4 $, or $ r = u_{i+2}-(m-2) $ and $ u_{i+2} \geq 2m-3 $.

For $ \alpha = \alpha_{i,r}+\alpha_{i,r+1} $, we have $ p_K(\alpha) = m $ if and only if either $ r = m-2 $ and $ u_{i+2} \geq 2m-3 $, or $ r = u_{i+2}-(m-1) $ and $ u_{i+2} \geq 2m-2 $.
\end{lemma}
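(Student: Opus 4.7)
The plan is to read the lemma off directly from \Cref{prop:EF}, which already gives closed-form expressions
\[
	p_K(2\alpha_{i,r}) = \min\{r+2,\,u_{i+2}-r+2\},\qquad p_K(\alpha_{i,r}+\alpha_{i,r+1}) = \min\{r+2,\,u_{i+2}-r+1\}
\]
(taking $\beta_j = \alpha_{i,r}$ in the proposition). So the entire content of the lemma is a bookkeeping exercise: for each of the two formulas, characterize when $\min\{a,b\}=m$ by splitting into the two cases of which coordinate attains the minimum.

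For $2\alpha_{i,r}$ I would set $\min\{r+2,u_{i+2}-r+2\}=m$ and consider the two sub-cases. The case $r+2=m\le u_{i+2}-r+2$ yields $r=m-2$ together with the necessary and sufficient constraint $u_{i+2}\ge 2(m-2)=2m-4$, producing the first alternative. The case $u_{i+2}-r+2=m\le r+2$ yields $r=u_{i+2}-(m-2)$ together with $u_{i+2}\ge 2m-4$. The two parametrizations coincide precisely when $r=u_{i+2}/2$, i.e.\ when $u_{i+2}=2m-4$; to avoid listing the same element twice, the boundary case is bundled into the first alternative, and the second alternative is strengthened to $u_{i+2}\ge 2m-3$. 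This reproduces the lemma's first statement.

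The second statement is entirely analogous with $u_{i+2}-r+2$ replaced by $u_{i+2}-r+1$. The two sub-cases give $r=m-2$ with $u_{i+2}\ge 2m-3$, and $r=u_{i+2}-(m-1)$ with $u_{i+2}\ge 2m-3$; the two parametrizations now coincide exactly when $u_{i+2}=2m-3$, so attaching this boundary instance to the first alternative forces the second alternative to carry the bound $u_{i+2}\ge 2m-2$, as stated.

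There is no genuine obstacle here: once \Cref{prop:EF} is invoked, the argument is a short case split on which argument of the $\min$ realizes it, and the only subtlety is to decide which side of the symmetric boundary $r=m-2 = u_{i+2}-(m-2)$ (respectively $r = m-2 = u_{i+2}-(m-1)$) to assign the collision to, which is what produces the asymmetric inequalities $2m-4$ vs.\ $2m-3$ and $2m-3$ vs.\ $2m-2$ appearing in the two halves of the lemma.
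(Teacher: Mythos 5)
Your proposal is correct and matches the paper's proof: both invoke \Cref{prop:EF} for the two $\min$ formulas and then split according to which argument of the $\min$ attains the value $m$, assigning the boundary case (both arguments equal to $m$) to the first alternative, which is exactly what produces the asymmetric bounds $2m-4$ versus $2m-3$ and $2m-3$ versus $2m-2$. The arithmetic translating the case split into the stated conditions on $r$ and $u_{i+2}$ is carried out correctly.
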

\begin{proof}
By \Cref{prop:EF}, we have
\[
	p_K(2\alpha_{i,r}) = \min\{r+2,u_{i+2}-r+2\},
\]
hence $ p_K(2\alpha_{i,r}) = m $ if and only if either $ r+2 = m $ and $ u_{i+2}-r+2 \geq m $, or $ r+2 > m $ and $ u_{i+2}-r+2 = m $.

Similarly,
\[
	p_K(\alpha_{i,r}+\alpha_{i,r+1}) = \min\{r+2,u_{i+2}-r+1\},
\]
hence $ p_K(\alpha_{i,r}+\alpha_{i,r+1}) = m $ if and only if either $ r+2 = m $ and $ u_{i+2}-r+1 \geq m $, or $ r+2 > m $ and $ u_{i+2}-r+1 = m $.
\end{proof}

\begin{lemma}
\label{lem:P30}
If $ \alpha = 3\beta_j $, where $ j \in \Z $, then we have the following:
\begin{itemize}
\item if $ v_j \geq 4 $, then $ p_K(\alpha) = 3 $,
\item if $ v_j = 3 $, then $ p_K(\alpha) = 4 $,
\item if $ v_j = 2 $, $ v_{j-1} > 2 $, and $ v_{j+1} > 2 $, then $ p_K(\alpha) = 6 $,
\item if $ v_j = 2 $ and $ v_{j-1} = 2 $ or $ v_{j+1} = 2 $, then $ p_K(\alpha) \geq 8 $.
\end{itemize}
\end{lemma}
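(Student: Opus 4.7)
My plan is to handle the four cases in turn, in each case first determining the indecomposable partitions of $\alpha = 3\beta_j$ and then counting all partitions via groupings, exploiting the fact that every partition of $\alpha$, after decomposing each part into indecomposables, refines to an indecomposable partition of $\alpha$. The main tools will be \Cref{thm:D1} for unique decomposability, \Cref{thm:HK3} for the structure of the relations among indecomposables, and \Cref{lem:V2} for constraints on which $\beta_k$ can appear in a two-part sum.

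For $v_j \geq 4$, the element $\alpha = 3\beta_j + 0\cdot\beta_{j+1}$ satisfies $1 \leq e = 3 \leq v_j - 1$ and $(e,f) \neq (v_j-1, v_{j+1}-1)$ (since $v_{j+1} \geq 2$), so \Cref{thm:D1} gives $p_K(\alpha|\ind) = 1$ with unique indecomposable partition $(\beta_j, \beta_j, \beta_j)$; every partition's parts are therefore of the form $k\beta_j$ and $p_K(\alpha)$ equals the number of integer partitions of $3$, namely $3$. For $v_j = 3$, the relation $v_j\beta_j = \beta_{j-1} + \beta_{j+1}$ yields $\alpha = \beta_{j-1} + \beta_{j+1}$, and \Cref{lem:V2} applied to this two-part sum forces every $\beta_k$ in any partition to satisfy $k \in \{j-1, j, j+1\}$, since otherwise $v_j$ would be forced to $2$. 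Solving $a_{j-1}\beta_{j-1} + a_j\beta_j + a_{j+1}\beta_{j+1} = 3\beta_j$ via $\beta_{j-1} = 3\beta_j - \beta_{j+1}$ and the $\Q$-linear independence of $\beta_j, \beta_{j+1}$ produces exactly the two indecomposable partitions $(\beta_j, \beta_j, \beta_j)$ and $(\beta_{j-1}, \beta_{j+1})$; enumerating the three groupings of the first together with the two of the second (which share only $(3\beta_j)$) gives $p_K(\alpha) = 4$.

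For $v_j = 2$ with $v_{j-1}, v_{j+1} > 2$, the identity $3\beta_j = \beta_{j-1} + \beta_j + \beta_{j+1}$ is a three-part sum, so \Cref{lem:V2} does not apply directly; instead I will bound ratios in $\R$. Using $v_{j-1}\beta_{j-1} = \beta_{j-2} + \beta_j < \beta_{j-1} + \beta_j$ (since $\beta_{j-2} < \beta_{j-1}$ in $\R$) I obtain $\beta_{j-1}/\beta_j < 1/(v_{j-1}-1) \leq 1/2$, hence $\beta_{j+1}/\beta_j = 2 - \beta_{j-1}/\beta_j > 3/2$ and then $\beta_{j+2}/\beta_j = v_{j+1}\beta_{j+1}/\beta_j - 1 > 3 \cdot 3/2 - 1 = 7/2 > 3$. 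Consequently $\beta_k > 3\beta_j$ in $\R$ for every $k \geq j+2$, and the conjugate argument (via $v_{-k} = v_k$) rules out $k \leq j-2$, so only $\beta_{j-1}, \beta_j, \beta_{j+1}$ can appear. Solving the linear equation as in the previous case (now with $\beta_{j-1} = 2\beta_j - \beta_{j+1}$) yields the two indecomposable partitions $(\beta_j, \beta_j, \beta_j)$ and $(\beta_{j-1}, \beta_j, \beta_{j+1})$, and the five set-partition groupings of the latter (of which two duplicate groupings of the former) contribute three new partitions, for a total of $6$.

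For the fourth case, without loss of generality $v_{j-1} = 2$; combining $\beta_{j-2} + \beta_j = 2\beta_{j-1}$ with $\beta_{j-1} + \beta_{j+1} = 2\beta_j$ gives $\beta_{j-2} = 3\beta_j - 2\beta_{j+1}$, so $(\beta_{j-2}, \beta_{j+1}, \beta_{j+1})$ is a third indecomposable partition of $\alpha$. Augmenting the six partitions from the previous case with $(\beta_{j-2}, 2\beta_{j+1})$ and $(\beta_{j-2}, \beta_{j+1}, \beta_{j+1})$ yields eight distinct partitions, proving $p_K(\alpha) \geq 8$. The main difficulty will be the ratio argument in the third case: the bound $\beta_{j+2}/\beta_j > 3$ must be derived using only the hypotheses $v_j = 2$ and $v_{j-1}, v_{j+1} \geq 3$, and the conjugate side must be checked via the symmetry $v_{-k} = v_k$. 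A secondary bookkeeping point is confirming that the listed partitions are pairwise distinct as multisets, which reduces to the $\Q$-linear independence of $\beta_{j-1}$ and $\beta_j$ guaranteed by \Cref{thm:HK3}.
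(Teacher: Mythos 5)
Your proof is correct, and in the two middle cases it takes a genuinely different route from the paper. The paper handles $v_j=3$ and the case $v_j=2$, $v_{j\pm1}>2$ by invoking \Cref{thm:D2} (condition (1)) to conclude $p_K(\alpha|\ind)=2$ and then simply listing the partitions; you instead determine the indecomposable partitions from scratch: for $v_j=3$ you use the two-part representation $3\beta_j=\beta_{j-1}+\beta_{j+1}$ together with \Cref{lem:V2} to confine the possible parts to $\beta_{j-1},\beta_j,\beta_{j+1}$, and for $v_j=2$ with $v_{j\pm1}\geq 3$, where \Cref{lem:V2} is unavailable, you run a direct real-embedding estimate ($\beta_{j-1}/\beta_j<1/2$, hence $\beta_{j+1}/\beta_j>3/2$, hence $\beta_{j+2}>3\beta_j$, with the conjugate side handled via $v_{-k}=v_k$) — this is legitimate, since any part of a partition of $\alpha$ is $\preceq\alpha$ and in particular $\leq\alpha$ in $\R$ — and then solve the resulting linear system using the $\Q$-independence of $\beta_j,\beta_{j+1}$. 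Your ratio computation checks out, the grouping counts ($3$, $3+2-1=4$, $3+5-2=6$) agree with the paper's explicit lists, and the final case (WLOG $v_{j-1}=2$ by conjugation, then the two extra partitions built from $\beta_{j-2}=3\beta_j-2\beta_{j+1}$) matches the paper's $(\beta_{j-2},2\beta_{j+1})$ and $(\beta_{j-2},\beta_{j+1},\beta_{j+1})$, noting correctly that the six earlier partitions need only $v_j=2$. What your approach buys is self-containment — it bypasses \Cref{thm:D2} (and hence \Cref{lem:D2suf1}) entirely at the cost of redoing some inequality work that the paper's machinery already encapsulates; the paper's proof is shorter precisely because it reuses that machinery.
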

\begin{proof}
If $ v_j \geq 4 $, then $ p_K(\alpha|\ind) = 1 $ by \Cref{thm:D1}. The only partition of $ \alpha $ with indecomposable parts is $ (\beta_j,\beta_j,\beta_j) $, and all partitions of $ \alpha $ are
\begin{equation}
\label{eq:P30}
	(3\beta_j),\ (2\beta_j,\beta_j),\ \text{and}\ (\beta_j,\beta_j,\beta_j),
\end{equation}
hence $ p_K(\alpha) = 3 $.

If $ v_j = 3 $, then $ \alpha $ satisfies condition (1) in \Cref{thm:D2}, hence $ p_K(\alpha|\ind) = 2 $. We have $ 3\beta_j = \beta_{j-1}+\beta_{j+1} $, and all partitions of $ \alpha $ are the ones listed in \eqref{eq:P30} together with $ (\beta_{j-1},\beta_{j+1}) $, hence $ p_K(\alpha) = 4 $.

If $ v_j = 2 $, $ v_{j-1} > 2 $, and $ v_{j+1} > 2 $, then $ \alpha $ again satisfies condition (1) in \Cref{thm:D2}, hence $ p_K(\alpha|\ind) = 2 $. We have $ 3\beta_j = \beta_{j-1}+\beta_j+\beta_{j+1} $ and all the partitions of $ \alpha $ are the ones listed in \eqref{eq:P30} together with
\begin{equation}
\label{eq:P30b}
	(\beta_{j-1},\beta_j+\beta_{j+1}),\ (\beta_{j-1}+\beta_j,\beta_{j+1}),\ \text{and}\ (\beta_{j-1},\beta_j,\beta_{j+1}),
\end{equation}
hence $ p_K(\alpha) = 6 $.

If $ v_j = 2 $ and $ v_{j-1} = 2 $, then
\[
	3\beta_j = \beta_{j-1}+\beta_j+\beta_{j+1} = \beta_{j-2}+2\beta_{j+1}.
\]
The element $ \alpha $ has the $ 6 $ partitions in \eqref{eq:P30} and \eqref{eq:P30b}, together with
\[
	(\beta_{j-2},2\beta_{j+1})\ \text{and}\ (\beta_{j-2},\beta_{j+1},\beta_{j+1}),
\]
hence $ p_K(\alpha) \geq 8 $.

If $ v_j = 2 $ and $ v_{j+1} = 2 $, then
\[
	3\beta_j = \beta_{j-1}+\beta_j+\beta_{j+1} = 2\beta_{j-1}+\beta_{j+2},
\]
and we again have $ p_K(\alpha) \geq 8 $.
\end{proof}

\begin{lemma}
\label{lem:P40}
If $ \alpha = 4\beta_j $, where $ j \in \Z $, then we have the following:
\begin{itemize}
\item if $ v_j \geq 5 $, then $ p_K(\alpha) = 5 $,
\item if $ v_j = 4 $, then $ p_K(\alpha) = 6 $,
\item if $ v_j = 3 $, then $ p_K(\alpha) \geq 8 $,
\item if $ v_j = 2 $, then $ p_K(\alpha) \geq 16 $.
\end{itemize}
\end{lemma}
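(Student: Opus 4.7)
My proof would parallel that of \Cref{lem:P30}, case-splitting on $v_j$. In each case, I would first use \Cref{thm:D1} or \Cref{thm:D2} to pin down $p_K(\alpha|\calI)$ and list every partition of $\alpha$ whose parts are indecomposable. Then I would enumerate the partitions of $\alpha$ arising from each indecomposable partition by grouping its parts into blocks (whose sums are totally positive integers and hence legal parts), taking care to identify which groupings produce the same unordered multiset.

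For the first two cases the count is exact. If $v_j \geq 5$, then $e = 4 \leq v_j-1$ and $f = 0$ put $\alpha$ in the uniquely decomposable range of \Cref{thm:D1}, so the partitions of $\alpha$ are in bijection with the integer partitions of $4$, giving $p_K(\alpha) = 5$. If $v_j = 4$, condition (1) of \Cref{thm:D2} applies (all three exclusions are trivially satisfied since $f = 0$), yielding a second indecomposable partition $(\beta_{j-1},\beta_{j+1})$ via $v_j\beta_j = \beta_{j-1}+\beta_{j+1}$; the only new partition it contributes (since merging its two parts gives $(4\beta_j)$, already counted) is itself, so $p_K(\alpha) = 6$.

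For the last two cases I only need a lower bound, obtained by exhibiting additional partitions. When $v_j = 3$, the indecomposable partition $(\beta_j,\beta_{j-1},\beta_{j+1})$ arising from $3\beta_j = \beta_{j-1}+\beta_{j+1}$ contributes exactly the three new partitions $(\beta_j+\beta_{j-1},\beta_{j+1})$, $(\beta_j+\beta_{j+1},\beta_{j-1})$, and $(\beta_j,\beta_{j-1},\beta_{j+1})$, bringing the total to at least $8$. When $v_j = 2$, both $(\beta_j,\beta_j,\beta_{j-1},\beta_{j+1})$ and $(\beta_{j-1},\beta_{j-1},\beta_{j+1},\beta_{j+1})$ are indecomposable partitions of $\alpha$ (obtained by one or two applications of $2\beta_j = \beta_{j-1}+\beta_{j+1}$); exhaustive set-partition mergings of each, after discarding duplicates within each family and against $(\beta_j,\beta_j,\beta_j,\beta_j)$, yield at least $16$ distinct partitions.

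The main obstacle is the $v_j = 2$ case, where many candidate multisets must be listed and verified to be distinct. Distinctness is guaranteed by the uniqueness in \Cref{thm:HK3}: every totally positive integer $\lambda$ has a unique expression $\lambda = e'\beta_{j'}+f'\beta_{j'+1}$, so two merged partitions coincide as unordered multisets only if their parts agree term-by-term as formal expressions in adjacent $\beta$'s. Since the lemma only claims $p_K(\alpha) \geq 16$, I can afford not to enumerate every possibility; exhibiting any $16$ concrete partitions of $4\beta_j$ (for instance, $5$ from $(\beta_j,\beta_j,\beta_j,\beta_j)$, $7$ new ones from $(\beta_j,\beta_j,\beta_{j-1},\beta_{j+1})$, and $4$ more from $(\beta_{j-1},\beta_{j-1},\beta_{j+1},\beta_{j+1})$) suffices.
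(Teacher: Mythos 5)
Your proposal is correct and follows essentially the same route as the paper: the same case split on $v_j$, the same use of \Cref{thm:D1} and condition (1) of \Cref{thm:D2} to get the exact counts $5$ and $6$, and the same three families of exhibited partitions (the $5$ from $(\beta_j,\beta_j,\beta_j,\beta_j)$, the $7$ from $\beta_{j-1}+2\beta_j+\beta_{j+1}$, and the $4$ from $2\beta_{j-1}+2\beta_{j+1}$) giving the lower bounds $8$ and $16$. One small imprecision: in the $v_j=4$ case the exclusions in condition (1) of \Cref{thm:D2} hold because $e=4$ differs from $2v_j-1=7$ and $2v_j-2=6$, not ``because $f=0$'' (if $v_{j+1}=2$ one has $f=0=v_{j+1}-2$), but this does not affect the argument.
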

\begin{proof}
If $ v_j \geq 5 $, then $ p_K(\alpha|\ind) = 1 $ by \Cref{thm:D1}. It follows that $ \alpha $ has $ 5 $ partitions corresponding to the partitions of $ 4 $, namely
\begin{equation}
\label{eq:P40a}
	(4\beta_j),\ (3\beta_j,\beta_j),\ (2\beta_j,2\beta_j),\ (2\beta_j,\beta_j,\beta_j),\text{ and }(\beta_j,\beta_j,\beta_j,\beta_j).
\end{equation}

If $ v_j = 4 $, then $ \alpha $ satisfies condition (1) in \Cref{thm:D2}, hence $ p_K(\alpha|\ind) = 2 $. We have
\[
	4\beta_j = \beta_{j-1}+\beta_{j+1},
\]
and $ \alpha $ has the $ 5 $ partitions listed in \eqref{eq:P40a} together with $ (\beta_{j-1},\beta_{j+1}) $, hence $ p_K(\alpha) = 6 $.

If $ v_j = 3 $, then
\[
	4\beta_j = \beta_{j-1}+\beta_j+\beta_{j+1},
\]
and $ \alpha $ has the $ 5 $ partitions in \eqref{eq:P40a} together with
\[
	(\beta_{j-1},\beta_j+\beta_{j+1}),\ (\beta_{j-1}+\beta_j,\beta_{j+1}),\ \text{and}\ (\beta_{j-1},\beta_j,\beta_{j+1}),
\]
hence $ p_K(\alpha) \geq 8 $.

If $ v_j = 2 $, then
\[
	4\beta_j = \beta_{j-1}+2\beta_j+\beta_{j+1} = 2\beta_{j-1}+2\beta_{j+1},
\]
hence $ \alpha $ has the $ 5 $ partitions in \eqref{eq:P40a} together with the $ 7 $ partitions
\begin{gather*}
	(\beta_{j-1},2\beta_j+\beta_{j+1}),\ (\beta_{j-1}+2\beta_j,\beta_{j+1}),\ (\beta_{j-1}+\beta_j,\beta_j+\beta_{j+1}),\ (\beta_{j-1},2\beta_j,\beta_{j+1}),\\
	(\beta_{j-1}+\beta_j,\beta_j,\beta_{j+1}),\ (\beta_{j-1},\beta_j,\beta_j+\beta_{j+1}),\ (\beta_{j-1},\beta_j,\beta_j,\beta_{j+1}) 
\end{gather*}
and the $ 4 $ partitions
\[
	(2\beta_{j-1},2\beta_{j+1}),\ (2\beta_{j-1},\beta_{j+1},\beta_{j+1}),\ (\beta_{j-1},\beta_{j-1},2\beta_{j+1}),\ (\beta_{j-1},\beta_{j-1},\beta_{j+1},\beta_{j+1}),
\]
hence $ p_K(\alpha) \geq 16 $.
\end{proof}

\begin{lemma}
\label{lem:P21}
If $ \alpha = 2\beta_j+\beta_{j+1} $, where $ j \in \Z $, then we have the following:
\begin{itemize}
\item if $ v_j \geq 3 $ and $ (v_j,v_{j+1}) \neq (3,2) $, then $ p_K(\alpha) = 4 $,
\item if $ (v_j, v_{j+1}) = (3,2) $, then $ p_K(\alpha) = 5 $,
\item if $ v_j = 2 $ and $ v_{j+1} \geq 4 $, then $ p_K(\alpha) = 6 $,
\item if $ v_j = 2 $, $ v_{j+1} = 3 $, and $ v_{j-1} > 2 $, then $ p_K(\alpha) = 6 $,
\item if $ v_j = 2 $, $ v_{j+1} = 3 $, and $ v_{j-1} = 2 $, then $ p_K(\alpha) = 7 $,
\item if $ v_j = 2 $ and $ v_{j+1} = 2 $, then $ p_K(\alpha) \geq 8 $.
\end{itemize}
\end{lemma}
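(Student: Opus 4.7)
The plan is to proceed by case analysis on $ (v_{j-1},v_j,v_{j+1}) $. In each case, I first identify all indecomposable decompositions of $ \alpha = 2\beta_j+\beta_{j+1} $ using \Cref{thm:D1} and \Cref{thm:D2}, and then enumerate the partitions of $ \alpha $ obtained by grouping the indecomposable summands, eliminating duplicates stemming from identifications of the form $ v_k\beta_k = \beta_{k-1}+\beta_{k+1} $. In all six subcases the final count reduces to a short finite check.

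In the case $ v_j \geq 3 $ with $ (v_j,v_{j+1}) \neq (3,2) $, \Cref{thm:D1} gives $ p_K(\alpha|\ind) = 1 $ with the single decomposition $ \alpha = \beta_j+\beta_j+\beta_{j+1} $, producing the four distinct partitions $ (\alpha) $, $ (2\beta_j,\beta_{j+1}) $, $ (\beta_j,\beta_j+\beta_{j+1}) $, and $ (\beta_j,\beta_j,\beta_{j+1}) $. When $ (v_j,v_{j+1}) = (3,2) $, condition~(3) of \Cref{thm:D2} is satisfied and yields the additional decomposition $ \alpha = \beta_{j-1}+\beta_{j+2} $, contributing the single new partition $ (\beta_{j-1},\beta_{j+2}) $, so that $ p_K(\alpha) = 5 $. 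When $ v_j = 2 $ with either $ v_{j+1} \geq 4 $, or $ v_{j+1} = 3 $ and $ v_{j-1} > 2 $, condition~(1) of \Cref{thm:D2} applies and gives the second decomposition $ \alpha = \beta_{j-1}+2\beta_{j+1} $ (obtained from $ 2\beta_j = \beta_{j-1}+\beta_{j+1} $); its four groupings contribute the two new partitions $ (\beta_{j-1},2\beta_{j+1}) $ and $ (\beta_{j-1},\beta_{j+1},\beta_{j+1}) $, since $ (\beta_{j-1}+\beta_{j+1},\beta_{j+1}) $ coincides with $ (2\beta_j,\beta_{j+1}) $, so $ p_K(\alpha) = 6 $.

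The two remaining cases are the most delicate. When $ v_j = 2 $, $ v_{j+1} = 3 $, $ v_{j-1} = 2 $, conditions~(1) and~(3) of \Cref{thm:D2} both fail and the argument from \Cref{lem:D2nec} produces a third decomposition $ \alpha = \beta_{j-2}+\beta_{j+2} $; I would verify that no further decomposition exists by invoking \Cref{lem:V2} (the condition $ v_{j+1} = 3 \neq 2 $ rules out any indecomposable outside $ \{\beta_{j-2},\dots,\beta_{j+2}\} $) together with a short direct check of nonnegative integer solutions supported on this range. This new decomposition adds exactly the partition $ (\beta_{j-2},\beta_{j+2}) $, giving $ p_K(\alpha) = 7 $. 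When $ v_j = v_{j+1} = 2 $, it suffices to exhibit eight distinct partitions: besides the six from $ \alpha = \beta_j+\beta_j+\beta_{j+1} $ and $ \alpha = \beta_{j-1}+2\beta_{j+1} $, the third decomposition $ \alpha = \beta_{j-1}+\beta_j+\beta_{j+2} $ (obtained by also applying $ 2\beta_{j+1} = \beta_j+\beta_{j+2} $) has five groupings; the grouping $ (\beta_{j-1}+\beta_{j+2},\beta_j) $ coincides with $ (\beta_j+\beta_{j+1},\beta_j) $ via \Cref{lem:EF1} and $ (\beta_{j-1},\beta_j+\beta_{j+2}) $ with $ (\beta_{j-1},2\beta_{j+1}) $, but the two remaining groupings $ (\beta_{j-1}+\beta_j,\beta_{j+2}) $ and $ (\beta_{j-1},\beta_j,\beta_{j+2}) $ are new, pushing the count to at least eight.

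The main obstacle is the bookkeeping of coincidences between partitions derived from different indecomposable decompositions. Each such coincidence stems from an application of $ v_k\beta_k = \beta_{k-1}+\beta_{k+1} $ at a specific index $ k $ (or an iteration thereof, as in \Cref{lem:EF1}), and checking them is elementary but must be done carefully in cases (v) and (vi).
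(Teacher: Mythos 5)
Your proposal is correct and follows essentially the same route as the paper: in each case you pin down the indecomposable decompositions of $\alpha=2\beta_j+\beta_{j+1}$ via \Cref{thm:D1}, \Cref{thm:D2} (and, in the boundary case $v_j=2$, $v_{j+1}=3$, $v_{j-1}=2$, the identity $\alpha=\beta_{j-2}+\beta_{j+2}$ together with \Cref{lem:V2}), and then count the coarsenings, checking coincidences coming from $v_k\beta_k=\beta_{k-1}+\beta_{k+1}$. The counts $4,5,6,6,7,\geq 8$ all come out as in the paper, and your "short direct check" in the seven-partition case is no less explicit than what the paper itself does there.

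The one point where you diverge is the last case $v_j=v_{j+1}=2$, and there your version is in fact the more careful one: you use the third decomposition $\alpha=\beta_{j-1}+\beta_j+\beta_{j+2}$ and extract the two genuinely new partitions $(\beta_{j-1}+\beta_j,\beta_{j+2})$ and $(\beta_{j-1},\beta_j,\beta_{j+2})$, correctly discarding $(\beta_{j-1},\beta_j+\beta_{j+2})=(\beta_{j-1},2\beta_{j+1})$ and, via \Cref{lem:EF1}, $(\beta_{j-1}+\beta_{j+2},\beta_j)=(\beta_j+\beta_{j+1},\beta_j)$. The paper instead writes $2\beta_j+\beta_{j+1}=2\beta_{j-1}+\beta_{j+2}$ and lists $(2\beta_{j-1},\beta_{j+2})$ and $(\beta_{j-1},\beta_{j-1},\beta_{j+2})$ as the extra partitions; but with $v_j=v_{j+1}=2$ one has $2\beta_{j-1}+\beta_{j+2}=3\beta_j\neq\alpha$, so that displayed identity is a slip, and your two extra partitions are the ones that actually establish $p_K(\alpha)\geq 8$. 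So not only is there no gap in your argument, it quietly repairs a small error in the paper's treatment of this case.
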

\begin{proof}
If $ v_j \geq 3 $ and $ (v_j, v_{j+1}) \neq (3,2) $, then $ p_K(\alpha,\ind) = 1 $ by \Cref{thm:D1}, and all partitions of $ \alpha $ are
\begin{equation}
\label{eq:P21a}
	(2\beta_j+\beta_{j+1}),\ (\beta_j,\beta_j+\beta_{j+1}),\ (2\beta_j,\beta_{j+1}),\text{ and }(\beta_j,\beta_j,\beta_{j+1}).
\end{equation}

If $ (v_j, v_{j+1}) = (3,2) $, then $ \alpha $ satisfies condition (3) in \Cref{thm:D2}, hence $ p_K(\alpha,\ind) = 2 $. We have
\[
	2\beta_j+\beta_{j+1} = \beta_{j-1}+\beta_{j+2},
\]
and all the partitions of $ \alpha $ are the $ 4 $ partitions listed in \eqref{eq:P21a} together with $ (\beta_{j-1},\beta_{j+2}) $.

If $ v_j = 2 $ and $ v_{j+1} \geq 4 $, then $ \alpha $ satisfies condition (1) in \Cref{thm:D2}, hence $ p_K(\alpha,\ind) = 2 $. We have
\[
	2\beta_j+\beta_{j+1} = \beta_{j-1}+2\beta_{j+1}
\]
and $ \alpha $ has the $ 4 $ partitions in \eqref{eq:P21a} together with
\begin{equation}
\label{eq:P21b}
	(\beta_{j-1},2\beta_{j+1})\text{ and }(\beta_{j-1},\beta_{j+1},\beta_{j+1}),
\end{equation}
hence $ p_K(\alpha) = 6 $.

If $ v_j = 2 $, $ v_{j+1} = 3 $, and $ v_{j-1} > 2 $, then $ \alpha $ also satisfies condition (1) of \Cref{thm:D2}, and we have $ p_K(\alpha) = 6 $ as above.

If $ v_j = 2 $, $ v_{j+1} = 3 $, and $ v_{j-1} = 2 $, then
\[
	2\beta_j+\beta_{j+1} = \beta_{j-1}+2\beta_{j+1} = \beta_{j-2}+\beta_{j+2}.
\]
By \Cref{lem:V2}, if $ \beta_{j_3} \preceq \beta_{j-2}+\beta_{j+2} $ for some $ j_3 \in \Z $, $ j_3 > j+2 $ or $ j_3 < j-2 $, then $ v_k = 2 $ for $ k \in \{j-2,\dots,j+2\} $. But we have $ v_{j+1} = 3 $, so this does not occur. It follows that $ p_K(\alpha|\ind) = 3 $ and the partitions of $ \alpha $ are the ones in \eqref{eq:P21a} and \eqref{eq:P21b} together with $ (\beta_{j-2},\beta_{j+2}) $, hence $ p_K(\alpha) = 7 $.

If $ v_j = 2 $ and $ v_{j+1} = 2 $, then
\[
	2\beta_j+\beta_{j+1} = \beta_{j-1}+2\beta_{j+1} = 2\beta_{j-1}+\beta_{j+2}
\]
and $ \alpha $ has the $ 6 $ partitions in \eqref{eq:P21a} and \eqref{eq:P21b} together with
\[
	(2\beta_{j-1},\beta_{j+2})\text{ and }(\beta_{j-1},\beta_{j-1},\beta_{j+2}),
\]
hence $ p_K(\alpha) \geq 8 $.
\end{proof}

\begin{lemma}
\label{lem:P12}
If $ \alpha = \beta_j+2\beta_{j+1} $, where $ j \in \Z $, then we have the following:
\begin{itemize}
\item if $ (v_j,v_{j+1}) \neq (2,3) $ and $ v_{j+1} \geq 3 $, then $ p_K(\alpha) = 4 $,
\item if $ (v_j, v_{j+1}) = (2,3) $, then $ p_K(\alpha) = 5 $,
\item if $ v_j \geq 4 $ and $ v_{j+1} = 2 $, then $ p_K(\alpha) = 6 $,
\item if $ v_j = 3 $, $ v_{j+1} = 2 $, and $ v_{j+2} > 2 $, then $ p_K(\alpha) = 6 $,
\item if $ v_j = 3 $, $ v_{j+1} = 2 $, and $ v_{j+2} = 2 $, then $ p_K(\alpha) = 7 $,
\item if $ v_j = 2 $ and $ v_{j+1} = 2 $, then $ p_K(\alpha) \geq 8 $.
\end{itemize}
\end{lemma}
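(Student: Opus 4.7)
The plan is to deduce this from \Cref{lem:P21} by passing to the Galois conjugate. Recall from \Cref{sec:Prelim} that $\beta_{-k} = \beta_k'$ for every $k \in \Z$ and that $v_{-k} = v_k$, and from the elementary properties of $p_K$ that $p_K(\alpha') = p_K(\alpha)$.

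Set $j' := -(j+1)$. Then $\beta_{j'} = \beta_{-(j+1)} = \beta_{j+1}'$ and $\beta_{j'+1} = \beta_{-j} = \beta_j'$, so
\[
    \alpha' = \beta_j' + 2\beta_{j+1}' = 2\beta_{j'} + \beta_{j'+1}.
\]
Moreover, $v_{j'-1} = v_{-(j+2)} = v_{j+2}$, $v_{j'} = v_{j+1}$, and $v_{j'+1} = v_j$. Thus $\alpha'$ is exactly of the form treated in \Cref{lem:P21}, and we have $p_K(\alpha) = p_K(\alpha')$.

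The six cases then translate directly. For example, ``$v_j \geq 4$ and $v_{j+1} = 2$'' becomes ``$v_{j'+1} \geq 4$ and $v_{j'} = 2$'', which is the case $v_{j'} = 2$, $v_{j'+1} \geq 4$ of \Cref{lem:P21} giving $p_K(\alpha') = 6$; similarly ``$v_j = 3$, $v_{j+1} = 2$, $v_{j+2} = 2$'' becomes ``$v_{j'+1} = 3$, $v_{j'} = 2$, $v_{j'-1} = 2$'' giving $p_K(\alpha') = 7$, and so on for the remaining four cases. The only step that requires care is bookkeeping the correspondence of the triples $(v_{j-1},v_j,v_{j+1},v_{j+2}) \leftrightarrow (v_{j'+2},v_{j'+1},v_{j'},v_{j'-1})$ to verify that each branch of \Cref{lem:P12} lands in exactly one branch of \Cref{lem:P21}; no further computation or partition-counting is needed, as all the work was already done there.
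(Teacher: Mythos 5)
Your proposal is correct and is essentially identical to the paper's own proof: the paper also sets $j' = -(j+1)$, writes $\alpha' = 2\beta_{j'}+\beta_{j'+1}$ with $v_{j'} = v_{j+1}$, $v_{j'+1} = v_j$, $v_{j'-1} = v_{j+2}$, and invokes \Cref{lem:P21} together with $p_K(\alpha') = p_K(\alpha)$. The case-by-case translation you sketch matches the six branches exactly, so nothing is missing.
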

\begin{proof}
Let $ j' = -(j+1) $ and consider
\[
	\alpha' = \beta_j'+2\beta_{j+1}' = \beta_{-j}+2\beta_{-(j+1)} = 2\beta_{j'}+\beta_{j'+1}.
\]
We have $ v_{j'} = v_{j+1} $, $ v_{j'+1} = v_j $, and $ v_{j'-1} = v_{j+2} $. The lemma follows from \Cref{lem:P21} applied to $ \alpha' $.
\end{proof}

With the help of the preceding series of lemmas, it is possible to characterize the elements $ \alpha \in \O_K^+ $ such that $ p_K(\alpha) = m $ for $ m \in \{2,3,4,5,6\} $. We do this for $ m = 6 $ and then find a necessary and sufficient condition for $ K $ to contain an element with $ 6 $ partitions.

\begin{proposition}
\label{prop:P6}
All the elements $ \alpha \in \O_K^+ $ such that $ p_K(\alpha) = 6 $ are the following (where $ i \geq -1 $ is~odd):
\begin{enumerate}[(a)]
\item[(a)] $ \alpha = 2\alpha_{i,4} $ if $ u_{i+2} \geq 8 $,
\item[(b)] $ \alpha = 2\alpha_{i,u_{i+2}-4} $ if $ u_{i+2} \geq 9 $,
\item[(c)] $ \alpha = \alpha_{i,4}+\alpha_{i,5} $ if $ u_{i+2} \geq 9 $,
\item[(d)] $ \alpha = \alpha_{i,u_{i+2}-5}+\alpha_{i,u_{i+2}-4} $ if $ u_{i+2} \geq 10 $,
\item[(e)] $ \alpha = 3\alpha_{i,1} $ if $ u_{i+2} = 2 $,
\item[(f)] $ \alpha = 4\alpha_{i,0} $ if $ u_{i+1} = 2 $,
\item[(g)] $ \alpha = 2\alpha_{i,u_{i+2}-1}+\alpha_{i+2,0} $ if $ u_{i+2} \geq 2 $ and $ u_{i+3} \geq 2 $, or $ u_{i+2} = 2 $ and $ u_{i+3} = 1 $,
\item[(h)] $ \alpha = \alpha_{i,0}+2\alpha_{i,1} $ if $ u_{i+1} \geq 2 $ and $ u_{i+2} \geq 2 $, or $ u_{i+1} = 1 $ and $ u_{i+2} = 2 $,
\item[(i)] conjugates of all of the above.
\end{enumerate}
\end{proposition}
\begin{proof}
Let $ \alpha = e\alpha_{i,r}+f\alpha_{i,r+1} $, where $ i \geq -1 $ is odd and $ 0 \leq r \leq u_{i+2}-1 $. Let $ j \in \Z $ be such that $ \beta_j = \alpha_{i,r} $, so that $ \alpha = e\beta_j+f\beta_{j+1} $. Assume that $ p_K(\alpha)=6 $, hence
\[
	(e,f) \in \{(2,0),(3,0),(4,0),(1,1),(2,1),(1,2)\}
\]
by \Cref{lem:PK6}.

If $ (e,f) = (2,0) $, then \Cref{lem:P20} shows that $ r = 4 $ and $ u_{i+2} \geq 8 $, or $ r = u_{i+2}-4 $ and $ u_{i+2} \geq 9 $.

If $ (e,f) = (1,1) $, then \Cref{lem:P20} shows that $ r = 4 $ and $ u_{i+2} \geq 9 $, or $ r = u_{i+2}-5 $ and $ u_{i+2} \geq 10 $.

If $ (e,f) = (3,0) $, then \Cref{lem:P30} shows that $ v_j = 2 $, $ v_{j-1} > 2 $, and $ v_{j+1} > 2 $. Thus, $ r = 1 $ and $ u_{i+2} = 2 $.

If $ (e,f) = (4,0) $, then \Cref{lem:P40} shows that $ v_j = 4 $, thus $ r = 0 $ and $ u_{i+1} = 2 $.

If $ (e,f) = (2,1) $, then \Cref{lem:P21} shows that $ p_K(\alpha) = 6 $ if and only if one of the following holds:
\begin{itemize}
\item $ v_j = 2 $ and $ v_{j+1} \geq 4 $,
\item $ v_j = 2 $, $ v_{j+1} = 3 $, and $ v_{j-1} > 2 $.
\end{itemize}
These conditions translate to
\begin{itemize}
\item $ r = u_{i+2}-1 $, $ u_{i+2} \geq 2 $, and $ u_{i+3} \geq 2 $,
\item $ r = u_{i+2}-1 $, $ u_{i+2} = 2 $, and $ u_{i+3} = 1 $.
\end{itemize}

If $ (e,f) = (1,2) $, then the condition follows similarly from \Cref{lem:P12}.
\end{proof}

\begin{theorem}
\label{thm:E6}
Let $ K = \Q(\sqrt{D}) $, where $ D \in \Z_{\geq 2} $ is squarefree. Let $ \omega_D $ have the continued fraction expansion $ \omega_D = [\lceil u_0/2 \rceil, \overline{u_1, \dots, u_s}] $, where $ u_0 = u_s $. There exists $ \alpha \in \O_K^+ $ such that $ p_K(\alpha) = 6 $ if and only if at least one of the following conditions is satisfied:
\begin{itemize}
\item $ u_i \geq 8 $ for some $ i \geq 1 $ odd,
\item $ u_i = 2 $ for some $ i \geq 0 $,
\item $ u_i \geq 2 $ and $ u_{i+1} \geq 2 $ for some $ i \geq 0 $.
\end{itemize}
\end{theorem}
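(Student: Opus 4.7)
The plan is to derive \Cref{thm:E6} directly from \Cref{prop:P6}, which already lists every $ \alpha \in \O_K^+ $ with exactly six partitions. Since $ p_K(\alpha') = p_K(\alpha) $, the existence of such an $ \alpha $ in $ K $ is equivalent to at least one of the eight parametric families (a)--(h) in \Cref{prop:P6} being non-empty, and the non-emptiness of each family is already spelled out as a combinatorial condition on the partial quotients $ u_i $ of $ \omega_D $. The theorem thus reduces to showing that the disjunction of those eight conditions equals the disjunction of the three conditions (i), (ii), (iii) in the statement.

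For the forward direction I would simply run through the eight cases. Cases (a)--(d) all impose $ u_{i+2} \geq 8 $ for some odd $ i \geq -1 $; since $ i+2 $ is then a positive odd integer, this yields (i). Cases (e) and (f) impose $ u_{i+2} = 2 $ (with $ i+2 $ positive and odd) and $ u_{i+1} = 2 $ (with $ i+1 $ non-negative and even) respectively, both of which yield (ii). Cases (g) and (h) each split into two sub-conditions: the main sub-condition of (g) gives $ u_{i+2}, u_{i+3} \geq 2 $ and the main sub-condition of (h) gives $ u_{i+1}, u_{i+2} \geq 2 $, each producing two consecutive partial quotients $ \geq 2 $ and hence (iii); the alternative sub-conditions both contain $ u_k = 2 $ for some $ k $, hence yield (ii).

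For the converse I would exhibit an explicit element in each scenario. If (i) holds, pick an odd $ j \geq 1 $ with $ u_j \geq 8 $, set $ i = j-2 $, and take $ \alpha = 2\alpha_{i,4} $ as in case (a). If (ii) holds with $ u_j = 2 $ for some $ j \geq 0 $, I would split on the parity of $ j $: for odd $ j $ use case (e) with $ i = j-2 $ and $ \alpha = 3\alpha_{i,1} $; for even $ j $ (including the edge case $ j = 0 $, for which $ i = -1 $ and $ u_0 = u_s $) use case (f) with $ i = j-1 $ and $ \alpha = 4\alpha_{i,0} $. If (iii) holds with $ u_j, u_{j+1} \geq 2 $, use case (g) when $ j $ is odd (taking $ i = j-2 $) and case (h) when $ j $ is even (taking $ i = j-1 $); in both instances the corresponding parametric conditions of \Cref{prop:P6} are satisfied.

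The argument is essentially routine bookkeeping; the only points to watch are the parity of the indices (the parameter $ i $ in \Cref{prop:P6} is always odd, but the index of the partial quotient that is prescribed can be of either parity, which is why (ii) and (iii) each require two sub-cases) and the edge case $ i = -1 $, which corresponds to $ u_0 = u_s $ in the purely periodic expansion. There is no substantive analytic or structural obstacle beyond this translation.
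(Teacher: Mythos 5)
Your proposal is correct and follows exactly the paper's route: the paper's proof of \Cref{thm:E6} is simply the observation that it follows immediately from \Cref{prop:P6}, and your case-by-case translation (including the parity bookkeeping for the odd parameter $i$ and the edge case $i=-1$) just makes explicit the routine verification the paper leaves to the reader.
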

\begin{proof}
This immediately follows from \Cref{prop:P6}.
\end{proof}

We know from \Cref{thm:S2} that $ 6 \in p_K(\O_K^+) $ for ``almost all'' squarefree $ D \in \Z_{\geq 2} $. By \Cref{thm:E6}, a squarefree $ D \in \Z_{\geq 2} $ belongs to the set $ \calD(6) $ if and only if
\begin{itemize}
\item $ u_i \leq 7 $ for every $ i \geq 1 $ odd,
\item $ u_i \neq 2 $ for every $ i \geq 0 $,
\item if $ u_i \geq 2 $, then $ u_{i+1} = 1 $ for every $ i \geq 0 $.
\end{itemize}

\begin{example}
\label{ex:E6}
There exists an $ \alpha \in \O_K^+ $ such that $ p_K(\alpha) = 6 $ if and only if
\[
	D \in \{2,3,6,10,11,13,14,19,22,26,29,30,31,33,38,39,41,42,46,\dots\},
\]
and there does \emph{not} exist an $ \alpha \in \O_K^+ $ such that $ p_K(\alpha) = 6 $ if and only if
\[
	D \in \{5,7,15,17,21,23,34,35,37,43,47,\dots\}.
\]
\end{example}

\Cref{thm:E6} completes the description of $ \calD(m) $ for $ 1 \leq m \leq 7 $. There does not seem to be any fundamental reason why the same techniques could not be extended to determine all the elements with $ m $ partitions, and hence $ \calD(m) $ for other values of $ m $.

\end{document}